\newcommand\C{\mathbb{C}}
\newcommand\Z{\mathbb{Z}}
\newcommand\st{\mathrm{st}}
\newcommand\nat{\mathrm{nat}}
\newcommand\g{\mathfrak{g}}
\newcommand\fm{\mathfrak{m}}
\renewcommand\fm{\mathfrak{m}}
\newcommand\fh{\mathfrak{h}}
\newcommand\fn{\mathfrak{n}}
\newcommand\fb{\mathfrak{b}}
\newcommand\fsl{\mathfrak{sl}}
\newcommand\fpsl{\mathfrak{psl}}
\newcommand\fosp{\mathfrak{osp}}
\newcommand\fgl{\mathfrak{gl}}
\newcommand\fu{\mathfrak{u}}
\newcommand\fk{\mathfrak{k}}
\newcommand\fH{\mathfrak{H}}
\newcommand\D{\Delta}
\renewcommand\S{\Sigma}
\newcommand\cA{\mathcal{A}}
\newcommand\cB{\mathcal{B}}
\newcommand\cG{\mathcal{G}}
\newcommand\cH{\mathcal{H}}
\newcommand\cK{\mathcal{K}}
\newcommand\cM{\mathcal{M}}
\newcommand\cN{\mathcal{N}}
\newcommand\cP{\mathcal{P}}
\newcommand\cU{\mathcal{U}}
\newcommand\ba{\mathbf{a}}
\newcommand\bb{\mathbf{b}}
\newcommand\bc{\mathbf{c}}
\newcommand\bi{\mathbf{i}}
\newcommand\bm{\mathbf{m}}
\newcommand\bp{\mathbf{p}}
\newcommand\bU{\mathbf{U}}
\DeclareMathOperator{\Span}{Span}
\DeclareMathOperator{\LinSpan}{LinSpan}
\DeclareMathOperator{\Supp}{Supp} 
\DeclareMathOperator{\Ind}{Ind}
\DeclareMathOperator{\hgt}{ht}
\theoremstyle{plain}
\newtheorem{theo}{Theorem}[section]
\newtheorem*{theo*}{Theorem}
\newtheorem{prop}[theo]{Proposition}
\newtheorem{lem}[theo]{Lemma}
\newtheorem{cor}[theo]{Corollary}
\theoremstyle{definition}
\newtheorem*{rem*}{Remark}
\newtheorem{rem}[theo]{Remark}
\newtheorem{example}[theo]{Example}
\numberwithin{equation}{section}
  \newcommand{\comments}[1]{
    \begin{center}
      \parbox{6.5 in}{
        \color{red}
          {\footnotesize \textbf{Comments:} #1}
        \color{black}}
    \end{center}}
  \newcommand{\comments}[1]{}
  \newcommand{\details}[1]{
      \ \\
      \color{OliveGreen}
        \begin{footnotesize}
          \textbf{Details:} #1
        \end{footnotesize}
      \color{black}
      \\
  }
  \newcommand{\details}[1]{}
\begin{document}
%

\title[Highest weight  modules for affine Lie superalgebras]
{Highest weight  modules for affine Lie superalgebras}
\author{Lucas Calixto}
\address{Departamento de Matemática\\
		   Instituto de Ci\^encias Exatas\\
		   UFMG\\
		   Belo Horizonte, Minas Gerais, Brazil, 30.123-970}
		   \email{lhcalixto@ufmg.br}	
\author{Vyacheslav Futorny}
\address{IME-USP, Caixa Postal 66281, CEP 05315-970, S\~ao Paulo, Brazil}
\email{futorny@ime.usp.br}
\subjclass{Primary 17B67}
\date{}

\keywords{affine Lie superalgebras, highest weight modules}

\begin{abstract}
We describe Borel and parabolic subalgebras of affine Lie superalgebras and study the Verma type modules associated to such subalgebras. We give necessary and sufficient conditions under which these modules  are simple. 

\end{abstract}

\maketitle \thispagestyle{empty}



\medskip

\section{Introduction}
%
Affine Lie algebras and superalgebras play an important role in many areas of physics and mathematics, such as symmetries, string theory, and number theory (see for instanse \cite{KW94, FK02}), and their representation theory is an active area of research. A very important class of modules are the so called weight modules, which are by definition modules on which a Cartan subalgebra acts semisimply. Among them are the Verma type modules,  which are associated to Borel subalgebras, and the generalized Verma type modules which are associated to parabolic subalgebras. It is well known that their structure depends deeply on the subalgebras from which they are induced. For instance, the weight spaces of a Verma type module are finite-dimensional if and only if the Borel subalgebra comes from a system of simple roots. The structure of Verma and generalized Verma type modules have been extensively studied for affine Lie algebras (see for instance, \cite{Fut97, FS93, Fut94, Cox94, KW01}). In this paper we study these modules for affine Lie superalgebras. 

Let $\g=\g_{\bar 0}\oplus \g_{\bar 1}$ be one of the Lie superalgebras $\fgl(m,n)$, $\fsl(m,n)$ with $m\neq n$, $\fosp(m,n)$, $D(2,1;\alpha)$ ($a\neq 0,-1$), $G(3)$ or $F(4)$, and let $\cG$ be its associated non-twisted affine Lie superalgebra. It is well known that parabolic sets and partitions of root systems behave nicer in the non-super setting than their super counterparts. For instance, if $\g$ is a semisimple Lie algebra, then that all parabolic partitions of the root system of $\g$ are conjugate under the action of the Weyl group of $\g$. In the super setting however this is no longer the case. In \cite{DFG09} the authors compare two combinatorial definitions of parabolic sets of roots systems, and prove that these definitions are equivalent in the non-super setting, which is not always the case in the supper setting. It was shown in \cite{Ser11} that any two parabolic partitions of a root system of $\g$ can be obtained one from the other by even and odd reflections. In Section~\ref{sec:par.sets.finite-case} we use these reflections to describe the parabolic sets of the root system of $\g$ (Proposition~\ref{prop:par.set}). In Section~\ref{sec:par.sets.affine-case}, combining the approaches of \cite{JK89, Fut92} and \cite{Ser11} we are able to describe all the parabolic partitions/sets for root systems of $\cG$. In particular, this provides a description of all parabolic and Borel subalgebras of $\cG$ (see Propositions~\ref{prop:delta.notin.P_0},~\ref{prop:delta.in.P_0} and Theorem~\ref{thm:descr.of.G_0}). 

A parabolic subalgebra $\cP$ of $\cG$ (associated to a parabolic set $P$) induces a decomposition $\cG=\cU^-\oplus (\cG^0+\cH)\oplus \cU$ (see Theorem~\ref{thm:parabolic.structure}).  Any given $(\cG^0+\cH)$-module $N$ can be extended to a $\cP$-module with trivial action of $\cU$. Then we can construct the induced module
	\[
M_\cP(N)=\Ind^\cG_{\cP} N.
	\]
When $N$ is simple, such a module is called a \emph{generalized Verma type module}, following \cite{Fut97}. In this case,  $M_\cP(N)$  admits a unique maximal proper submodule, and thus, a unique simple quotient $L_\cP(N)$.  
 A simple module is \emph{parabolic induced} if it is isomorphic to  $L_\cP(N)$ for some $\cP \neq \cG$ and $N$, otherwise  the module is  \emph{cuspidal}. Finite-dimensional Lie superalgebras that admit  weight cuspidal modules with finite-dimensional weight spaces
 were classified by Dimitrov, Mathieu and Penkov in \cite{DMP04}. 
 Such simple subalgebras of  affine Lie superalgebras were classified by Futorny and Rao in \cite[Corollary~6.1]{EF09}. A list of such subalgebras is comprised by: simple Lie subalgebras of type $A$ or $C$; $\frak{osp}(m,2n)$, for $m=1,3,4,5,6$; and $D(2,1;a)$.  On the other hand,  it was shown in \cite{EF09}  that any simple weight module
  of non-zero level with finite-dimensional weight spaces is parabolic induced from some parabolic subalgebra $\cP\subseteq \cG$.

  In Section~\ref{sec:main.sec} we consider non-standard Verma type  (resp. generalized Verma type) modules associated to a given non-standard Borel (resp. parabolic) subalgebra $\cB$ (resp. $\cP$) of $\cG$. Our main result, Theorem~\ref{thm:main}, gives a necessary and sufficient condition under a generalized Verma type module $M_\cP(N)$ with a nonzero central charge is simple. As a consequence we obtain a simplicity criterion for a Verma type module  $M_{\cB}(\lambda)$ in Corollary \ref{cor-main} (see also  Corollary \ref{cor-empty}). This generalizes  similar results for  affine Lie algebras from \cite{FS93, Fut94, Cox94} to the case of non-twisted affine Lie superalgebras. Even though the results were expected, the proof required a careful treatment of the super situation.

\medskip

\iftoggle{detailsnote}{
\medskip

\paragraph{\textbf{Note on the arXiv version}} For the interested reader, the arXiv version of this paper includes hidden details of some straightforward computations and arguments that are omitted in the pdf file.  These details can be displayed by switching the \texttt{details} toggle to true in the tex file and recompiling.
}{}

%
\section{Preliminaries}
%


%

In this paper we assume that $\g=\g_{\bar 0}\oplus \g_{\bar 1}$ is one of the Lie superalgebras $\fgl(m,n)$, $\fsl(m,n)$ with $m\neq n$, $\fosp(m,n)$, $D(2,1;\alpha)$ ($a\neq 0,-1$), $G(3)$ or $F(4)$. In particular, $\g_{\bar 0}$ is a reductive Lie algebra, and $\g$ admits an even non-degenerate supersymetric invariant bilinear form $(\cdot | \cdot)$. Choose a Cartan subalgebra $\fh$ of $\g$ (i.e. a Cartan subalgebra of $\g_{\bar 0}$) and let $\g=\fh\oplus\left(\bigoplus_{\alpha\in \dot{\D}} \g_\alpha\right)$ be the root space decomposition of $\g$, where $\g_\alpha$ denotes the root space associated to $\alpha\in \dot{\D}\subseteq \fh^*$. Recall that every element of $\dot{\D}$ is either purely even or purely odd, meaning that, for any $\alpha\in \dot{\D}$, either $\g_\alpha\subseteq \g_{\bar 0}$, or $\g_\alpha\subseteq \g_{\bar 1}$. Then $\dot{\D}=\dot{\D}_{\bar 0}\cup \dot{\D}_{\bar 1}$, where $\dot{\D}_i$ denote the subset of all roots with parity $i\in \Z_{2}$ of $\dot{\D}$. For any subset $X\subseteq \dot{\D}$ we set $X_i=X\cap \dot{\D}_i$, for $i\in \Z_2$.

A linearly independent subset $\dot{\S}\subseteq \dot{\D}$ will be called a \emph{base} if one can find elements $x_\alpha\in \g_\alpha$, $y_\alpha\in \g_{-\alpha}$, such that $\{x_\alpha, \ y_\alpha\mid \alpha\in \dot{\S}\}\cup \fh$ generates $\g$, and $[x_\alpha, y_\beta]=0$, for $\alpha\neq \beta$. These relations imply that every root of $\g$ is a purely non-negative or a purely non-positive integer linear combination of elements in $\dot{\S}$. Such roots are called positive or negative, respectively, and we decompose $\dot{\D}=\dot{\D}^+(\dot{\S})\cup \dot{\D}^-(\dot{\S})$, where $\dot{\D}^+(\dot{\S})$ and $\dot{\D}^-(\dot{\S})$ denote the set of positive and negative roots, respectively. A root of $\dot{\D}^+(\dot{\S})$ is said to be \emph{simple} if it cannot be decomposed as a sum of two positive roots. With this terminology a base $\dot{\S}$ is a \emph{system of simple roots} of $\dot{\D}$ in the usual sense. By construction, contragredient Lie superalgebras admit a base. If we let $h_\alpha:=[x_\alpha, y_\alpha]$, then, for the case where $\g\neq \fgl(n,m)$, we have that the set $\{h_\alpha\mid \alpha\in \dot{\S}\}$ gives a basis of $\fh$. If $\g = \fgl(n,m)$, then we first define $h_0:=I_{n,m}$ (where $I_{n,m}$ denotes the identity matrix of $\fgl(n,m)$), and the set $\{h_\alpha\mid \alpha\in \dot{\S}\}\cup \{h_0\}$ forms a basis for $\fh$. We call $\alpha\in \S_{\bar 1}$ an \emph{isotropic root} if $\alpha(h_\alpha) = 0$. It is well known that $\g$ admits systems of simple roots with only one odd root. Such systems are called \emph{distinguished}. If $\g$ is a semisimple Lie algebra, then all systems of simple roots are conjugate under the action of the Weyl group of $\g$. In the super setting however this is no longer the case (see \cite{Ser11}).

\begin{rem}\label{rmk:fix.not.A(n,n).sl(n,n)}
The Lie superalgebras $\fsl(n,n)$ and $A(n,n)$ are isomorphic, respectively, to $[\fgl(n,n),\fgl(n,n)]$ and $[\fgl(n,n),\fgl(n,n)]/C$, where $C = \C h_0$ is the one-dimensional center of $\fsl(n,n)$. For these Lie superalgebras the set $\{h_\alpha\mid \alpha\in \dot{\S}\}$ is no longer linearly independent, and, as a consequence, we are not allowed to talk about systems of simple roots as we do for the other cases.
\end{rem}

Let now $\cG=\cG_{\bar 0}\oplus \cG_{\bar 1}$ denote the non-twisted affine Lie superalgebra associated to $\g$. Namely, if we let $L(\g)=\g\otimes \C[t,t^{-1}]$ denote the loop superalgebra associated to $\g$, then
	\[
\cG=L(\g)\oplus \C K\oplus \C d,
	\]
with the following commutation relations: for all $x,y\in \g$, $m,n\in \Z$
\begin{gather*}
[x(m), y(n)]=[x,y](m+n) + m\delta_{m,-n}(x|y)K, \\
[d, x(m)]=-mx(m),\quad [K,\cG]=0,
\end{gather*}
where for every $z\in \g$, $m\in \Z$, $z(m)$ stands for the element $z\otimes t^m\in L(\g)$. Recall that $(\cdot | \cdot)$ extends from $\g$ to an even non-degenerate supersymetric invariant bilinear form $(\cdot | \cdot)$ on $\cG$ by:
\begin{gather*}
(x(m)|y(n))=\delta_{m,-n}(x|y), \quad (L(\g)|\C K + \C d)=0, \\
(K|K)=(d|d)=0,\quad (K|d)=-1.
\end{gather*}

Let 
	\[
\cH=\fh\oplus \C K\oplus \C d
	\]
be the Cartan subalgebra of $\cG$. As in the finite dimensional case, the action of $\cH$ on $\cG$ provides a root space decomposition $\cG=\cH\oplus\left(\bigoplus_{\alpha\in \D} \cG_\alpha\right)$, 
where $\Delta=\{\alpha+n\delta\mid \alpha\in\dot{\Delta}\cup\{0\},\ n\in\Z\}\setminus\{0\}\subseteq \cH^*$, and $\cG_\alpha$ denotes the root space associated to the root $\alpha\in \D$. Similarly to the finite-dimensional case, we have $\D=\D_{\bar 0}\cup \D_{\bar 1}$, where $\D_i$ denote the set of roots of $\D$ of parity $i\in \Z_2$.

Finally, for a subset $R\subseteq \dot{\D}$ (resp. $\D$), we define 
	\[
\langle R\rangle= \Z_{\geq 0}R \cap \dot{\D}\quad \text{(resp.} \langle R\rangle= \Z_{\geq 0}R \cap \D).
	\]

\section{Parabolic partitions and parabolic sets}
 A subset $P$ of $\D$ is said to be \emph{additively closed} if $\alpha,\beta\in P$, $\alpha+\beta\in \D$ implies that $\alpha+\beta\in P$. An additively closed subset $P\subseteq \D$ is called a \emph{parabolic partition} (resp. \emph{parabolic set}) if $P\cup (-P)=\D$ and $P_0 = P\cap (-P)=\emptyset$ (resp. $P_0 = P\cap (-P)\neq \emptyset$). Parabolic partitions are also called sets of positive roots in the literature.

\subsection{Finite case}\label{sec:par.sets.finite-case}
Recall that we are assuming $\g=\g_{\bar 0}\oplus \g_{\bar 1}$ equal $\fgl(m,n)$, $\fsl(m,n)$ with $m\neq n$, $\fosp(m,n)$, $D(2,1;\alpha)$ ($a\neq 0,-1$), $G(3)$ or $F(4)$. In this section we consider parabolic sets for the system of roots $\dot{\D}$ of $\g$. It is well known that there is a bijection between parabolic partitions and systems of simple roots of $\dot{\D}$. Namely, for every partition $P$, we define a system of simple roots $\dot{\S}$ of $\dot{\D}$ to be the set of all roots of $P$ that cannot be decomposed as sum of two roots of $P$. Then $P=\dot{\D}^+(\dot{\S})$. On the other hand, for a given system of simple roots $\dot{\S}$ of $\dot{\D}$, it is clear that $\dot{\D}^+(\dot{\S})$ defines a parabolic partition of $\dot{\D}$.

 Let $\dot{\S}\subseteq \dot{\D}$ be a system of simple roots. Recall the definition of reflections for Lie superalgebras:
\begin{enumerate}
\item If $\alpha\in \dot{\S}$ is an odd isotropic root (i.e. $\alpha(h_\alpha)=0$), then we define the odd reflection  $r_\alpha : \dot{\S}\to \dot{\D}$ by:
\begin{gather*}
r_\alpha(\alpha)=-\alpha,\\
r_\alpha(\beta)=\beta\quad \text{if }\alpha\neq \beta\text{ and } \alpha(h_\beta)=\beta(h_\alpha)=0,\\
r_\alpha(\beta)=\beta+\alpha\quad \text{if either }\alpha(h_\beta)\neq 0\text{ or } \beta(h_\alpha)\neq 0;\\
\end{gather*}
\item If $\alpha\in \dot{\D}$ is even, then we define the even reflection $r_\alpha=\fh^*\to \fh^*$ as usual: 
	\[
r_\alpha(\mu)=\mu - \mu(h_\alpha)\alpha,\quad \text{ for all } \mu\in \fh^*;
	\]
\item If $\alpha\in \dot{\D}$ is a non-isotropic odd root (i.e. $2\alpha=\beta\in \dot{\D}_{\bar 0}$), then we define $r_\alpha:=r_\beta$.
\end{enumerate}
For all cases, we define $\dot{\S}_\alpha:= r(\dot{\S})=\{r_\alpha(\beta)\mid \beta\in \dot{\S}\}$. It was shown in \cite{Ser11} that if $\dot{\S}$ and $\dot{\S}'$ are two systems of simple roots, then one can be obtained from the other by a chain of even and odd reflections. In particular, any two parabolic partitions of $\dot{\D}$ can be obtained one from the other by a chain of even and odd reflections. In what follows we use these reflections to obtain all parabolic sets of $\dot{\D}$.

\begin{lem}\label{lem:pos.system.in.par.sets}
Let $P$ be a parabolic set of $\dot{\D}$. Then there exists a system of positive roots $\dot{\D}^+$ such that $\dot{\D}^+\subseteq P$.
\end{lem}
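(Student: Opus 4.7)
The plan is to decompose $P$ into its symmetric ``Levi'' part and its strictly one-sided ``nilpotent'' part, and then to construct a system of positive roots by further splitting the Levi part.

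First I would set $L := P \cap (-P) = P_0$ and $N := P \setminus L$, so that $P = L \sqcup N$ with $L = -L$ and $N \cap (-N) = \emptyset$. A short calculation using the additive closure of $P$ shows that $L$ is itself additively closed, that $N + N \subseteq N$ (whenever the sum lies in $\dot{\D}$), and that $L + N \subseteq N$; the last two facts follow because if $\alpha \in N$, $\beta \in P$ and $\alpha+\beta$ were in $L$, then $-(\alpha+\beta)\in P$ and hence $-\alpha = -(\alpha+\beta)+\beta \in P$, contradicting $\alpha \in N$. The problem then reduces to producing a system of positive roots $L^+ \subseteq L$: given such an $L^+$, the set $\dot{\D}^+ := L^+ \cup N$ is a system of positive roots, since the equality $\dot{\D}^+ \cup (-\dot{\D}^+) = \dot{\D}$ and the disjointness $\dot{\D}^+ \cap (-\dot{\D}^+) = \emptyset$ are immediate from the decompositions $\dot{\D} = L \cup N \cup (-N)$ and $L = L^+ \sqcup (-L^+)$, while additive closure reduces to the three cases $L^+ + L^+$, $L^+ + N$, $N + N$, each covered by the closure properties just listed.

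Next I would produce the system of positive roots $L^+$ for $L$. The observation is that $L \subseteq \dot{\D}$ is a symmetric additively closed subset, and so is the root system of the Levi subsuperalgebra $\fl := \fh \oplus \bigoplus_{\alpha\in L}\g_\alpha$ of $\g$. For each of the Lie superalgebras enumerated in the excerpt, the Levi subsuperalgebras decompose as direct sums of basic classical Lie superalgebras of the same types (together with a toral factor), and in particular they admit systems of simple roots by the general theory recalled in Section~2. This yields the desired partition $L = L^+ \sqcup (-L^+)$ with $L^+$ additively closed.

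The main obstacle I anticipate is precisely this last step: justifying in the super setting that the symmetric additively closed subset $L$ admits a system of positive roots. One either appeals to the explicit classification of Levi subsuperalgebras of basic classical Lie superalgebras, or gives a direct geometric argument by choosing $h \in \fh_{\bar 0}^*$ vanishing on $L$ and strictly positive on $N$ (such an $h$ exists by a Farkas/Hahn--Banach argument, using that $N$ is additively closed and that $L$ is its ``relative boundary'' in $P$), and then perturbing $h$ by a generic element to separate $L$ into additively closed halves. Once $L^+$ is in hand, the remaining verification that $\dot{\D}^+ := L^+ \cup N$ is a system of positive roots contained in $P$ is routine.
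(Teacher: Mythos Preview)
Your decomposition $P = L \sqcup N$ and the closure properties you establish are correct, and the reduction to finding a parabolic partition $L^+$ of $L$ is sound: once $L^+$ exists, your verification that $L^+ \cup N$ is a parabolic partition of $\dot\D$ goes through exactly as you indicate. This is a genuinely different route from the paper's proof, which instead runs a descent argument: start with \emph{any} system of positive roots $\dot\D^+(\dot\S)$, observe that if $\dot\D^+(\dot\S)\not\subseteq P$ then some simple root $\alpha\in\dot\S$ lies outside $P$, apply the (even or odd) reflection $r_\alpha$, and check that $|\dot\D^+(\dot\S_\alpha)\cap(-P)| < |\dot\D^+(\dot\S)\cap(-P)|$; finiteness of $\dot\D$ then terminates the process. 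The paper's argument has the virtue of exercising exactly the odd/even reflection machinery that drives the rest of the section, while yours is more structural and makes the shape of the eventual answer (Proposition~3.3) visible from the start.

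Your two proposed justifications for step~3, however, both need adjustment. Approach~(a) is imprecise: a symmetric closed subset $L\subseteq\dot\D$ need not be a Levi subsystem $\dot\D\cap\mathbb Z R$ for some $R\subseteq\dot\S$ (think of $\{\pm\theta\}$ inside type $A$), so you cannot directly invoke a classification of Levi subsuperalgebras; in fact the statement that $L$ \emph{is} of Levi type is essentially the content of the paper's Proposition~3.3, which is proved \emph{using} the lemma you are trying to establish. Approach~(b) --- finding $h$ vanishing on $L$ and strictly positive on $N$ --- would work but is more than you need, and the Farkas-type separation is not entirely trivial to justify here. The simplest fix is to drop the requirement on $N$ altogether: pick any linear functional $h$ on the real span of $\dot\D$ with $h(\alpha)\neq 0$ for all $\alpha\in L$ and set $L^+=\{\alpha\in L: h(\alpha)>0\}$. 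This is automatically an additively closed half of $L$, and since you have already shown $L+N\subseteq N$ and $N+N\subseteq N$ independently of the choice of $L^+$, the set $L^+\cup N$ is a parabolic partition of $\dot\D$ contained in $P$.
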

\begin{proof}
Let $\dot{\D}^+=\dot{\D}^+(\dot{\S})$ be a system of positive roots of $\dot{\D}$. Suppose $\dot{\D}^+$ is not contained in $P$. Therefore there exists $\alpha\in \dot{\S}\setminus P$ (in particular, $\alpha\in -P$ and $-\alpha\in P$). Then 
\begin{align*}
\dot{\D}^+(\dot{\S}_\alpha)\cap (-P) & = ((\dot{\D}^+(\dot{\S})\setminus \{\alpha, 2\alpha\})\cup \{-\alpha,-2\alpha\})\cap (-P) \\
& =((\dot{\D}^+(\dot{\S})\cap (-P))\setminus \{\alpha, 2\alpha\})\cup (\{-\alpha,-2\alpha\}\cap (-P)) \\
&= (\dot{\D}^+(\dot{\S})\cap (-P))\setminus \{\alpha, 2\alpha\},
\end{align*}
where the last equality follows from the fact that neither $-\alpha$ nor $-2\alpha$ can lie in $-P$. Thus $|\dot{\D}^+(\dot{\S}_\alpha)\cap (-P)|<|\dot{\D}^+(\dot{\S})\cap (-P)|$. Since $|\dot{\D}|<\infty$, we can continue to apply this argument to get a system of simple roots $\dot{\S}'$ such that $\dot{\D}^+(\dot{\S}')\cap (-P)=\emptyset$. In particular, $\dot{\D}^+(\dot{\S}') \subseteq P$.
\end{proof}

\begin{lem}\label{lem:par.set.construction}
Let $\dot{\S}$ be a set of simple roots of $\dot{\D}$, and $R\subseteq \dot{\S}$ be a non-empty set. Then the set $P=\dot{\D}^+(\dot{\S})\cup \langle -R\rangle$ is a parabolic set of $\dot{\D}$.
\end{lem}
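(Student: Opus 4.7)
The plan is to verify directly the three defining conditions for $P$ to be a parabolic set: additive closure, the covering condition $P \cup (-P) = \dot{\D}$, and non-empty intersection $P \cap (-P) \neq \emptyset$. The last two are essentially immediate, so the real content is additive closure.

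For the covering condition, I would note that $\dot{\D}^+(\dot{\S}) \subseteq P$ implies $-\dot{\D}^+(\dot{\S}) = \dot{\D}^-(\dot{\S}) \subseteq -P$, and since $\dot{\D} = \dot{\D}^+(\dot{\S}) \cup \dot{\D}^-(\dot{\S})$, we obtain $P \cup (-P) = \dot{\D}$. For the non-empty intersection condition, pick any $\alpha \in R$ (which exists since $R \neq \emptyset$); then $\alpha \in \dot{\S} \subseteq \dot{\D}^+(\dot{\S}) \subseteq P$, while $-\alpha \in \langle -R\rangle \subseteq P$, so $\alpha \in P \cap (-P)$.

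The main step is additive closure. I would take $\alpha, \beta \in P$ with $\alpha + \beta \in \dot{\D}$ and split into three cases according to the pieces $\dot{\D}^+(\dot{\S})$ and $\langle -R\rangle$ to which $\alpha, \beta$ belong. If both lie in $\dot{\D}^+(\dot{\S})$, the sum is a positive root and stays in $\dot{\D}^+(\dot{\S}) \subseteq P$. If both lie in $\langle -R\rangle$, i.e.\ both are non-positive integer combinations of elements of $R$, then so is their sum, so $\alpha + \beta \in \Z_{\geq 0}(-R) \cap \dot{\D} = \langle -R\rangle \subseteq P$. The only subtle case is when $\alpha \in \dot{\D}^+(\dot{\S})$ and $\beta \in \langle -R\rangle$ (or vice versa): writing $\alpha = \sum_{\gamma \in \dot{\S}} a_\gamma \gamma$ with $a_\gamma \in \Z_{\geq 0}$ and $\beta = -\sum_{\gamma \in R} b_\gamma \gamma$ with $b_\gamma \in \Z_{\geq 0}$, the expansion of $\alpha + \beta$ in $\dot{\S}$ has non-negative coefficient $a_\gamma$ at every $\gamma \in \dot{\S} \setminus R$. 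Since $\alpha + \beta$ is a root, its coefficients in $\dot{\S}$ are either all non-negative or all non-positive; in the first case $\alpha + \beta \in \dot{\D}^+(\dot{\S}) \subseteq P$, and in the second case all coefficients at $\dot{\S} \setminus R$ must vanish, so $\alpha + \beta$ is a non-positive integer combination of $R$, placing it in $\langle -R\rangle \subseteq P$.

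I do not anticipate any genuine obstacle: the argument is a careful bookkeeping of simple-root coefficients, using only the sign dichotomy of root expansions that is built into the definition of a base. No properties special to the super setting (parity, isotropy, odd reflections) enter the proof.
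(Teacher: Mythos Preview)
Your proposal is correct and follows essentially the same route as the paper: verify $P\cup(-P)=\dot\Delta$ trivially, then prove additive closure by splitting into the three cases (both positive, both in $\langle -R\rangle$, one of each) and handling the mixed case via the sign dichotomy of simple-root coefficients. You additionally spell out the verification of $P\cap(-P)\neq\emptyset$, which the paper leaves implicit.
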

\begin{proof}
It is clear that $\dot{\D}=P\cup(-P)$. Let $\alpha,\beta\in P$ such that $\alpha+\beta\in\dot{\D}$. If either $\alpha,\beta\in\dot{\D}^+(\dot{\S})$ or $\alpha,\beta\in \langle -R\rangle$, then $\alpha+\beta\in P$, since both $\dot{\D}^+(\dot{\S})$, and $\langle -R\rangle$, are additively closed. The only remaining case is when $\alpha\in\dot{\D}^+(\dot{\S})$ and $\beta\in \langle -R\rangle$. In this case, we have
	\[
\alpha=\sum_{\gamma\in\dot{\S}}k_\gamma\gamma,\quad  \beta=\sum_{\eta\in R}-k_\eta' \eta,
	\]
where $k_\gamma\geq 0$, for all $\gamma\in\dot{\S}$, and $k_\eta'\geq 0$, for all $\eta\in R$.
Hence
	\[
\alpha+\beta=\sum_{\gamma\in\dot{\S}\setminus R}k_\gamma\gamma+
\sum_{\eta\in R}\ell_\eta \eta,
	\]
where $\ell_\eta=k_\eta-k_\eta'$, for all $\eta\in R$. Now, since $\dot{\S}$ is linearly independent, and every root is either a non-negative, or a non-positive integer linear combination of elements in $\dot{\S}$, we have the following possibilities:
\begin{enumerate}
\item $k_\gamma>0$ for some $\gamma\in\dot{\S}\setminus R$. In this case $k_\gamma\geq 0$, for all $\gamma\in\dot{\S}\setminus R$, $\ell_\eta\geq 0$, for all $\eta\in R$, and hence $\alpha+\beta\in\dot{\D}^+(\dot{\S})$.
\item $k_\gamma=0$, for all $\gamma\in\dot{\S}\setminus R$. Then either $\ell_\eta\geq 0$ for all $\eta\in R$, and $\alpha+\beta\in\dot{\D}^+(\dot{\S})$, or $\ell_\eta\leq 0$ for all $\eta\in R$, and $\alpha+\beta\in\langle -R\rangle$.
\end{enumerate}
This concludes the proof.
\end{proof}

\begin{prop}\label{prop:par.set}
Let $P$ be a parabolic set of $\dot{\D}$. Then $P=\dot{\D}^+(\dot{\S})\cup \langle -R\rangle$ for some set of simple roots $\dot{\S}$ and a non-empty subset $R \subseteq \dot{\S}$.
\end{prop}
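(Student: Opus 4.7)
The plan is to use Lemma~\ref{lem:pos.system.in.par.sets} to choose a system of simple roots $\dot{\S}$ with $\dot{\D}^+(\dot{\S})\subseteq P$, and then take $R:=\{\alpha\in \dot{\S}\mid -\alpha\in P\}$. Once I check that $R$ is nonempty, Lemma~\ref{lem:par.set.construction} guarantees that $\dot{\D}^+(\dot{\S})\cup\langle -R\rangle$ is a parabolic set, and the inclusion $\dot{\D}^+(\dot{\S})\cup\langle -R\rangle\subseteq P$ is automatic: $\dot{\D}^+(\dot{\S})\subseteq P$ by the choice of $\dot{\S}$, and the elements of $-R$ lie in $P$ by definition, so additive closure of $P$ forces $\langle-R\rangle\subseteq P$ as well.

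The heart of the argument is the following auxiliary claim, which I would prove by induction on the height $\hgt(\gamma):=\sum_{\alpha\in\dot{\S}}n_\alpha$ of $\gamma=\sum_{\alpha\in\dot{\S}}n_\alpha\alpha\in\dot{\D}^+(\dot{\S})$:
	\[
\text{if }-\gamma\in P,\ \text{then every }\alpha\in\dot{\S}\text{ with }n_\alpha>0\text{ lies in }R.
	\]
If $\hgt(\gamma)=1$ then $\gamma\in\dot{\S}$ and the claim is immediate. Otherwise $\gamma$ is not simple, so I can decompose $\gamma=\beta_1+\beta_2$ with $\beta_1,\beta_2\in\dot{\D}^+(\dot{\S})$ of strictly smaller height. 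Since $\beta_1,-\gamma\in P$ and $\beta_1+(-\gamma)=-\beta_2\in\dot{\D}$, additive closure gives $-\beta_2\in P$; symmetrically $-\beta_1\in P$. Applying the inductive hypothesis to $\beta_1$ and $\beta_2$, every simple root appearing in $\beta_1$ or $\beta_2$ lies in $R$; since the simple roots appearing in $\gamma$ are exactly those appearing in at least one of the $\beta_i$, the claim follows.

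With this auxiliary claim in hand, the remaining steps are short. First, $R\neq\emptyset$: because $P_0\neq\emptyset$ we may pick $\gamma\in P_0$, and by replacing $\gamma$ with $-\gamma$ if needed we arrange $\gamma\in\dot{\D}^+(\dot{\S})$; then the claim applied to $\gamma$ produces a simple root in $R$. Second, for the remaining inclusion $P\subseteq\dot{\D}^+(\dot{\S})\cup\langle-R\rangle$: take $\gamma\in P$; if $\gamma\notin\dot{\D}^+(\dot{\S})$, then $-\gamma\in\dot{\D}^+(\dot{\S})\subseteq P$, so the claim applies to $-\gamma$ and yields $-\gamma\in\langle R\rangle$, i.e.\ $\gamma\in\langle -R\rangle$.

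The main point requiring care is the inductive decomposition $\gamma=\beta_1+\beta_2$ of a non-simple positive root as a sum of two positive roots, which tacitly uses the identification of the combinatorial notion of a simple root (indecomposable in $\dot{\D}^+$) with membership in the base $\dot{\S}$. This holds for the basic classical Lie superalgebras considered in the paper, but one should verify it once since root strings and multiplicities behave less transparently than in the purely even case.
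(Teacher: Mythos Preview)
Your proof is correct and follows essentially the same approach as the paper: choose $\dot{\S}$ via Lemma~\ref{lem:pos.system.in.par.sets}, set $R=\{\alpha\in\dot{\S}\mid -\alpha\in P\}$, and argue by induction on height that any $-\gamma\in P$ with $\gamma\in\dot{\D}^+(\dot{\S})$ lies in $\langle -R\rangle$. The only notable differences are cosmetic: the paper decomposes $\gamma$ specifically as a simple root plus a positive root of height one less (citing \cite{FSS00} for this fact), whereas you use the weaker statement that a non-simple positive root splits as a sum of two positive roots; and your inductive claim (that the support of $\gamma$ is contained in $R$) is formally a bit stronger than what the paper proves directly, though the two are equivalent here. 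Your explicit verification that $R\neq\emptyset$ is also a small improvement in clarity over the paper, which leaves this implicit.
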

\begin{proof}
Recall that we are assuming that parabolic sets are such that $P\cap(-P)=P_0\neq \emptyset$, and let $\dot{\D}^+(\dot{\S})$ be a set of positive roots contained in $P$, where $\dot{\S}$ is a set of simple roots (see Lemma~\ref{lem:pos.system.in.par.sets}). Set
	\[
R=\{-P\}\cap \{\dot{\S}\}.
	\]	
We claim $P=\dot{\D}^+(\dot{\S})\cup \langle -R\rangle$. Indeed, first of all notice that $-R\subseteq P$, and hence $\langle -R\rangle\subseteq P$, since $P$ is a additively closed. Let now $\beta\in P\setminus \dot{\D}^+(\dot{\S})$. Then $\beta=-\gamma$ for some $\gamma\in\dot{\D}^+(\dot{\S})$. We define the height of $\beta$ to be the height of $\gamma$, that is, if 
	\[
	\gamma=\sum_{\alpha\in\dot{\S}} k_\alpha\alpha, 
	\]
then the height of $\beta$ is given by $\hgt(\beta)=\hgt (\gamma)=\sum k_\alpha$. We will prove, by induction on the height of $\beta$, that $\beta\in\langle -R\rangle$. If $\hgt(\beta)=1$, then $-\gamma\in-\dot{\S}$. In particular, $-\gamma\in P\cap -\dot{\S}\subseteq \langle -R\rangle$. Suppose now $\hgt(\beta)=n$, and consider $\gamma'\in \dot{\D}^+$ with $\hgt(\gamma')=n-1$, such that $-\gamma=-\alpha-\gamma'$, for some $\alpha\in\dot{\S}$ (the existence of such $\gamma'$ can be verified case by case, see for instance \cite{FSS00} for details). Since $-\gamma'=-\gamma+\alpha\in \dot{\D}$, where $-\gamma, \alpha\in P$, we have that $-\gamma'\in P$. Similarly, $-\alpha=-\gamma+\gamma'\in \dot{\D}$ and hence $-\alpha\in P$. Since $\hgt(\alpha)=1$, we have that $-\alpha\in \langle -R\rangle$. On the other hand, by induction, we also have $-\gamma'\in\langle -R\rangle$. Then $-\gamma\in\langle -R\rangle$, since $-\alpha, -\gamma' \in\langle -R\rangle$. Hence our claim is proved.
\end{proof}

Let $\dot{\S}$ be a set of simple roots of $\dot{\D}$, and $R\subseteq \dot{\S}$ be a non-empty set. Consider the parabolic set $P=\dot{\D}^+(\dot{\S})\cup \langle -R\rangle$ (see Lemma~\ref{lem:par.set.construction}). For $\alpha\in\dot{\S}$, we define the reflection of $P$ along $\alpha$ to be
	\[
P_\alpha=\dot{\D}^+(\dot{\S}_\alpha)\cup\langle -R_\alpha\rangle,
	\]
where $R_\alpha=\{r_\alpha(\beta)\mid \beta\in R\}$. By Lemma~\ref{lem:par.set.construction}, $P_\alpha$ is also a parabolic set of $\dot{\D}$.

\begin{cor}
Let $\dot{\Pi}$ be a fixed set of simple roots of $\dot{\D}$ (for instance, a distinguished one). Then every parabolic set $P$ can be obtained by applying a chain of odd and even reflections on a parabolic set $P_{\dot{\Pi}}=\dot{\D}^+(\dot{\Pi})\cup\langle -S\rangle$, where $S$ is a suitable subset of $\dot{\Pi}$.
\end{cor}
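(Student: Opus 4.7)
The plan is to combine Proposition~\ref{prop:par.set} with Serganova's result recalled above, which asserts that any two systems of simple roots of $\dot{\D}$ are related by a finite chain of even and odd reflections. By Proposition~\ref{prop:par.set}, any parabolic set $P$ has the form $P = \dot{\D}^+(\dot{\S}) \cup \langle -R\rangle$ for some system of simple roots $\dot{\S}$ and some non-empty $R \subseteq \dot{\S}$, so it suffices to transport the pair $(\dot{\S}, R)$ back to a pair $(\dot{\Pi}, S)$ along such a chain.

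Concretely, I would fix a chain realizing the passage from $\dot{\Pi}$ to $\dot{\S}$: set $\dot{\S}_0 = \dot{\Pi}$ and $\dot{\S}_i = r_{\beta_i}(\dot{\S}_{i-1})$ for some $\beta_i \in \dot{\S}_{i-1}$, so that $\dot{\S}_k = \dot{\S}$. Each reflection restricts to a bijection $r_{\beta_i}\colon \dot{\S}_{i-1} \to \dot{\S}_i$ (a routine check using the defining formulas $r_\alpha(\alpha) = -\alpha$ and $r_\alpha(\beta) \in \{\beta, \beta + \alpha\}$ in the isotropic odd case, the reduction $r_\alpha = r_{2\alpha}$ in the non-isotropic odd case, and standard Weyl-group facts in the even case). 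Pulling $R$ back step-by-step then yields a well-defined subset
\[
S = \bigl(r_{\beta_1}^{-1} \circ r_{\beta_2}^{-1} \circ \cdots \circ r_{\beta_k}^{-1}\bigr)(R) \subseteq \dot{\Pi},
\]
and Lemma~\ref{lem:par.set.construction} ensures that $P_{\dot{\Pi}} = \dot{\D}^+(\dot{\Pi}) \cup \langle -S\rangle$ is indeed a parabolic set of $\dot{\D}$.

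It then remains to verify by induction on $i$, using the definition of the reflection of a parabolic set $P_\alpha = \dot{\D}^+(\dot{\S}_\alpha) \cup \langle -r_\alpha(R)\rangle$ given just before the corollary, that applying $r_{\beta_1}, \ldots, r_{\beta_i}$ in order to $P_{\dot{\Pi}}$ produces $\dot{\D}^+(\dot{\S}_i) \cup \langle -(r_{\beta_i} \circ \cdots \circ r_{\beta_1})(S)\rangle$; at $i = k$ this equals $\dot{\D}^+(\dot{\S}) \cup \langle -R\rangle = P$. The only nontrivial step is the case-by-case bijectivity check above; beyond that, the argument is essentially a bookkeeping exercise stringing together Proposition~\ref{prop:par.set}, Serganova's connectivity theorem, and the tautological compatibility between reflection of a parabolic set and reflection of its underlying system of simple roots.
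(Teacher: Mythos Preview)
Your proposal is correct and follows essentially the same approach as the paper: invoke Proposition~\ref{prop:par.set} to write $P=\dot{\D}^+(\dot{\S})\cup\langle -R\rangle$, use Serganova's theorem to produce a chain $\varphi$ of reflections from $\dot{\Pi}$ to $\dot{\S}$, and set $S=\varphi^{-1}(R)$. The paper's proof is just a two-line sketch of this, whereas you spell out the bijectivity of each $r_{\beta_i}$ on simple roots and the inductive verification that the chain carries $P_{\dot{\Pi}}$ to $P$; but the underlying argument is identical.
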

\begin{proof}
By Proposition~\ref{prop:par.set}, every parabolic set $P$ is of the form $\dot{\D}^+(\dot{\S})\cup \langle -R\rangle$, where $\dot{\S}$ is a set of simple roots of $\dot{\D}$, and $R\subseteq \dot{\S}$ is a suitable subset. On the other hand, one can obtain $\dot{\S}$ from $\dot{\Pi}$ by applying a chain of even and odd reflections on $\dot{\Pi}$ (see \cite[Corollary~4.5]{Ser11}). Let $\varphi$ be such a chain that sends $\dot{\Pi}$ to $\dot{\S}$, and consider $S=\varphi^{-1}(R)$. Now it is clear that $\varphi$ sends $\cP_{\dot{\Pi}}$ to $P$.
\end{proof}

\begin{cor}\label{cor:par.sets.containing.D_0}
If $\g$ is of type I, then a parabolic set $P=\dot{\D}^+(\dot{\S})\cup \langle -R\rangle$ contains $\dot{\D}_{\bar 0}$ if and only if $\dot{\S}$ is a distinguished system of simple roots and $R=\dot{\S}_{\bar 0}$. If $\g$ is of type II, then there exist no parabolic set containing $\dot{\D}_{\bar 0}$.
\end{cor}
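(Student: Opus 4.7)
The plan is to exploit the $\g_{\bar 0}$-module structure of $\g_{\bar 1}$ and translate additive closure of $P$ into a submodule condition. Suppose $P$ is a parabolic set with $\dot{\D}_{\bar 0} \subseteq P$; since $\dot{\D}_{\bar 0}$ is symmetric under negation, $\dot{\D}_{\bar 0} \subseteq P_0$. Setting $Y := P \cap \dot{\D}_{\bar 1}$, I would write $P = \dot{\D}_{\bar 0} \cup Y$ and observe that additive closure of $P$ forces $\alpha + \beta \in Y$ whenever $\alpha \in Y$, $\beta \in \dot{\D}_{\bar 0}$, and $\alpha + \beta \in \dot{\D}$. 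Since $[\g_\alpha,\g_\beta]$ spans $\g_{\alpha+\beta}$ exactly when $\alpha+\beta$ is a root, this is equivalent to saying that $W := \bigoplus_{\alpha \in Y}\g_\alpha$ is a $\g_{\bar 0}$-submodule of $\g_{\bar 1}$.

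For type II, I would invoke the fact that $\g_{\bar 1}$ is irreducible as a $\g_{\bar 0}$-module, so $W \in \{0,\g_{\bar 1}\}$ and hence $Y \in \{\emptyset,\dot{\D}_{\bar 1}\}$. The empty case gives $P = \dot{\D}_{\bar 0}$ and $P \cup (-P) = \dot{\D}_{\bar 0} \neq \dot{\D}$, contradicting the parabolic-set condition; the full case gives the trivial parabolic $P = \dot{\D}$, excluded from the statement. For type I, I would use that $\g_{\bar 1} = V^+ \oplus V^-$ splits into exactly two irreducible $\g_{\bar 0}$-submodules, so $W \in \{0, V^+, V^-, \g_{\bar 1}\}$. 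Discarding the extremes as before, $Y$ is the weight-set of $V^+$ or of $V^-$. Fixing a distinguished system $\dot{\Pi}$ with unique odd simple root $\gamma$, these two weight-sets are exactly $\dot{\D}^+(\dot{\Pi}) \cap \dot{\D}_{\bar 1}$ and $\dot{\D}^-(\dot{\Pi}) \cap \dot{\D}_{\bar 1}$, leading to $P = \dot{\D}^+(\dot{\Pi}) \cup (\dot{\D}^-(\dot{\Pi}) \cap \dot{\D}_{\bar 0})$, or the analogous set for $-\dot{\Pi}$.

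To match the form of the corollary, I would verify that $\dot{\D}^-(\dot{\Pi}) \cap \dot{\D}_{\bar 0} = \langle -\dot{\Pi}_{\bar 0}\rangle$, which reduces to the claim that in any distinguished system for a type I superalgebra on our list, every positive even root is a non-negative integer combination of even simple roots only (the unique odd simple root $\gamma$ never appears in its expansion). I would check this case by case for $\fgl(m,n)$, $\fsl(m,n)$ with $m \neq n$, and $\fosp(2,2n)$, using the standard description of distinguished simple roots. The converse direction is then immediate from the same verification: if $\dot{\S}$ is distinguished and $R = \dot{\S}_{\bar 0}$, then $\dot{\D}_{\bar 0}^+ \subseteq \dot{\D}^+(\dot{\S})$ and $\dot{\D}_{\bar 0}^- \subseteq \langle -\dot{\S}_{\bar 0}\rangle$, giving $\dot{\D}_{\bar 0} \subseteq P$. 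The main obstacle is the initial translation from the purely combinatorial additive-closure condition on $P$ to the algebraic $\g_{\bar 0}$-submodule condition on $W$; once this is set up, the irreducibility (type II) or the explicit two-component splitting (type I) of $\g_{\bar 1}$ delivers the classification, and the distinguished-system check is routine.
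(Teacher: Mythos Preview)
Your proof is correct and takes a genuinely different route from the paper's. The paper works directly with the given presentation $P=\dot{\D}^+(\dot{\S})\cup\langle -R\rangle$ and argues that $\dot{\D}_{\bar 0}^-\subseteq\langle -R\rangle$ forces $\dot{\S}_{\g_{\bar 0}}\subseteq R\subseteq\dot{\S}$; it then invokes Kac's inequality $|\dot{\S}_{\bar 0}|\le|\dot{\S}_{\g_{\bar 0}}|$ (with equality precisely for type I distinguished systems) to conclude. Your approach instead forgets the presentation, translates additive closure into the statement that $W=\bigoplus_{\alpha\in P\cap\dot{\D}_{\bar 1}}\g_\alpha$ is a $\g_{\bar 0}$-submodule of $\g_{\bar 1}$, and then reads off the answer from the $\g_{\bar 0}$-module structure of $\g_{\bar 1}$ (irreducible in type II, two irreducibles in type I). This is more conceptual and makes the type I/type II dichotomy transparent; the paper's argument is shorter but relies on an external cardinality fact.

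One point you should tighten: you classify the \emph{set} $P$ and exhibit a distinguished $\dot{\Pi}$ with $P=\dot{\D}^+(\dot{\Pi})\cup\langle -\dot{\Pi}_{\bar 0}\rangle$, but the corollary as stated concerns the given pair $(\dot{\S},R)$, and such presentations are not a priori unique. To close this, note that $P_0=\langle\pm R\rangle=\dot{\D}_{\bar 0}$ forces $R\subseteq\dot{\S}_{\bar 0}$, and that $P\setminus P_0\subseteq\dot{\D}^+(\dot{\S})$ forces $\dot{\D}^+_{\bar 1}(\dot{\S})$ to equal the weight set of $V^+$; irreducibility of $V^+$ then gives a unique lowest weight with respect to $\dot{\D}^+_{\bar 0}(\dot{\S})$, hence a unique odd simple root in $\dot{\S}$. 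The paper's counting argument handles this automatically since it never leaves the given $\dot{\S}$.
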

\begin{proof}
It is easy to see that the only possibility for $P$ to contain $\dot{\D}_{\bar 0}$ is if $\dot{\S}_{\g_{\bar 0}}\subseteq R$, where $\dot{\S}_{\g_{\bar 0}}$ denotes the set of simple roots of $\g_{\bar 0}$ associated to the set of positive even roots $\dot{\D}^+(\dot{\S})_{\bar 0}$. But  $|\dot{\S}_{\bar 0}|\leq |\dot{\S}_{\g_{\bar 0}}|$ for any choice of $\dot{\S}$ (see \cite[Proposition~1.6~(d)]{kac78}), and the equality holds if and only if $\g$ is of type I and $\dot{\S}$ is distinguished.
\end{proof}

\begin{rem}
If $\g$ is $\fgl(m,n)$, $\fsl(m,n)$ with $m\neq n$, or $\fosp(2,2n-2)$ for $n\geq 2$, then it follows from Corollary~\ref{cor:par.sets.containing.D_0} that the only parabolic subalgebras of $\g$ that contains $\g_{\bar 0}$ are induced from parabolic sets of the form $P_{\rm dis}:=\dot{\D}^+(\dot{\S}_{\rm dis})\cup\langle -\dot{\S}_{\bar 0}\rangle$, where $\dot{\S}_{\rm dis}$ is some distinguished system of simple roots. For the other types of Lie superalgebras, there is no such a parabolic subsuperalgebra.
\end{rem}

\subsection{The affine case}\label{sec:par.sets.affine-case}
 In this section we consider parabolic sets  and partitions for a system of roots $\Delta$ of a non-twisted affine Lie superalgebra $\cG$. In \cite[Theorem~4.13]{DFG09}, the authors described such sets from a geometric point of view. They proved that parabolic partitions and parabolic sets are described using the notion of strongly parabolic and principal parabolic sets. These definitions depend on how the parabolic sets interact with certain hyperplanes in the vector space spanned by $\D$. Our approach towards to this description is more combinatorial. We construct parabolic partitions explicitly in terms of pairs $(\dot{\S}, X)$, where $\dot{\S}$ is a system of simple roots of $\dot{\D}$, and $X\subseteq \dot{\S}$. For the description of parabolic sets we will need one more parameter that will be given by a subset $S\subseteq X$.

As usual we distinguish sets associated to $\cG$ by those associated to $\g$ by a dot above the respective set (for instance, if $\D$ denote the system of roots of $\cG$, then $\dot{\D}$ will denote the system of roots of $\g$).
Recall that the root system of $\cG$ is given by
	\[
\Delta=\{\alpha+n\delta\mid \alpha\in\dot{\Delta}\cup\{0\},\ n\in\Z\}\setminus\{0\}.
\]

We give now the two extreme examples of parabolic partitions of $\D$.

\begin{example}
Let $\S=\{\alpha_0,\alpha_1,\ldots, \alpha_n\}$ be a fixed set of simple roots of $\cG$. Such a system can be chosen such that $\S=\dot{\S}\cup \{\alpha_0=\delta-\theta\}$, where $\dot{\S}$ is a fixed system of simple roots  of $\g$, and $\theta$ is the highest root of $\g$ with respect to $\dot{\S}$. It is clear that $P_{\st}(\dot{\S})=\{\alpha+ n\delta \mid \alpha\in\dot{\Delta}\cup \{0\},\ n>0\}\cup \dot{\Delta}^+(\dot{\S})$ is a parabolic partition of $\Delta$. The Borel subalgebra induced by $P_{\st}(\dot{\S})$ is 
	\[
\cB_\st(\dot{\S})=\cH\oplus (\fn^+\otimes 1)\oplus (\g\otimes t\C[t]).
	\]
\end{example}
The Borel subalgebra in the next example cannot be obtained via a system of simple roots of $\cG$ as that of the previous example.
\begin{example}
Let $\fb(\dot{\S})=\fh\oplus \fn^+(\dot{\S})$ be a Borel subalgebra of $\g$ induced by a system of simple roots $\dot{\S}$ of $\dot{\D}$. This Borel subalgebra induces a Borel subalgebra of $\cG$ given by
	\[
\cB_\nat(\dot{\S})=\cH\oplus (\fh\otimes t\C[t])\oplus (\fn^+(\dot{\S})\otimes \C[t,t^{-1}]).
	\]
The \emph{natural} parabolic partition of $\cG$ is defined by $P_\nat(\dot{\S})=\{\alpha\in\Delta\mid \g_\alpha\subseteq \cB_\nat(\dot{\S})\}$. Explicitly, we have that $P_\nat(\dot{\S})=\{\alpha+n\delta,\ k\delta\mid \alpha\in\dot{\Delta}^+(\dot{\S}), n\in\Z,\ k\in \Z_+\}$. 
\end{example}
The partitions $P_\st(\dot{\S})$ and $P_\nat(\dot{\S})$ are not conjugate under the action of the Weyl group $W$ (recall that $W$ is by definition the Weyl group of $\g_{\bar 0}$). This follows from the fact that the existence of $w\in W$ such that $w(P_{st}(\dot{\S}))=P_{\nat}(\dot{\S})$, would provide an isomorphism between the corresponding Borel  subalgebras. However,
\begin{align*}
\cB_{\st}(\dot{\S}) & = (\fh+\C K)\oplus [\cB_{\st}(\dot{\S}), \cB_{\st}(\dot{\S})]\text{ and } \\
\cB_{\nat}(\dot{\S}) & = (\C K+\fh\otimes \C[t])\oplus  [\cB_{\nat}(\dot{\S}), \cB_{\nat}(\dot{\S})]
\end{align*}
and the existence of an isomorphism implies that $\fh+\C K\cong \C K+\fh\otimes \C[t]$, which is clearly a contradiction.

In order to give a precise description of parabolic partitions of $\D$, we need the notion of a reflection along a root of a parabolic partition $P$ of $\D$. As in the finite case, a root $\alpha\in P$ defines a reflection of $P$ if the set
	\[
r_\alpha(P) := P_\alpha = (P\setminus\{\alpha, 2\alpha\})\cup\{-\alpha, -2\alpha\}
	\]
is again a parabolic partition of $\D$. Two parabolic partitions $P$ and $P'$ are said to be adjacent if $P'=r_\alpha(P)$, for a unique root $\alpha\in P$. If $P$ and $P'$ are adjacent, then we say that $P'$ is obtained from $P$ by a reflection along $\alpha$. Following \cite{JK89}, a root $\alpha\in P$ is said to be \emph{good} if $|(\alpha+\Z_{<0}\delta)\cap P|<\infty$. A root that is not good is called \emph{bad}. Let $G(P)$ and $B(P)$ denote the sets of good and bad roots of $P$, respectively. In particular, $P=G(P)\cup B(P)$.

\begin{rem}
\begin{enumerate}
\item If $\delta\in P$, then $\delta$ is a good root of $P$.
\item Every root of $P_\st(\dot{\S})$ is good, for any choice of $\dot{\S}$.
\item The natural partition is the partition (containing $\delta$) with smallest possible set of good roots. In fact, the only good roots of $P_\nat(\dot{\S})$ are $k\delta$ with $k\in \Z_+$, and it is clear that any partition containing $\delta$ also has these roots as good roots.
\end{enumerate}
\end{rem}

Let $\dot{\S}$ be a set simple roots of $\dot{\D}$, $X\subseteq \dot{\S}$, $\dot{\D}^+(X)=\langle X\rangle$ and $\dot{\D}(X)=\langle\pm X\rangle$. We define
\begin{gather*}
P(\dot{\S},X)=\{\alpha+n\delta\mid \alpha\in \dot{\D}^+(\dot{\S})\setminus \dot{\D}^+(X),\ n\in \Z\}\ \cup \\
\{\alpha+n\delta\mid \alpha\in \dot{\D}(X)\cup \{0\},\ n\in \Z_{>0}\}\cup \dot{\D}^+(X).
\end{gather*}
In particular, we have that $P(\dot{\S},\emptyset) =P_\nat(\dot{\S})$ and $P(\dot{\S},\dot{\S})=P_\st(\dot{\S})$. Also, notice that if $X\neq \dot{\S}$, then $P(\dot{\S},X)\neq P_\st(\dot{\S}')$ for any set of simple roots $\dot{\S}'$ of $\D$. This follows from the fact that all roots of $P_\st(\dot{\S}')$ are good, for any choice of $\dot{\S}'$.
\begin{lem}\label{lem:gbroots}
Let $P$ be a parabolic partition such that $\delta\in P$. Then we have the following properties:
\begin{enumerate}
\item \label{item.lem:gbroots.1} If $\alpha,\beta,\alpha+\beta\in P$, where either $\alpha$ or $\beta$ is bad, then $\alpha+\beta$ is bad.
\item \label{item.lem:gbroots.2} If $\alpha,\beta,\alpha+\beta\in P$, where both $\alpha$ and $\beta$ are good, then $\alpha+\beta$ is good.
\item \label{item.lem:gbroots.3} If $\alpha,\beta,\alpha-\beta\in P$, where both $\alpha$ and $\beta$ are good, then $\alpha-\beta$ is good.
\item \label{item.lem:gbroots.4} If $\beta=\alpha+k \delta \in P$ is a bad root for some $k\in \Z$, then $\alpha\in P$ bad.
\item \label{item.lem:gbroots.5} Let $\beta=\alpha+k\delta\in P$ with $\alpha\in \dot{\D}$ and $k\in \Z$. If $\beta$ is good, then either $\alpha$ or $-\alpha$ is a good root of $P$.
\end{enumerate}
\end{lem}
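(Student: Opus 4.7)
The plan is to exploit additive closure of $P$ together with two features of $\delta\in P$: by additive closure $k\delta\in P$ for every $k>0$, so $P\cap(-P)=\emptyset$ forces $-k\delta\notin P$ for $k>0$; and, since $\Delta=P\cup(-P)$, any $\gamma\in\Delta\setminus P$ satisfies $-\gamma\in P$. In particular, for any good $\gamma\in P$, all but finitely many $\gamma-N\delta$ lie outside $P$, so $-\gamma+N\delta\in P$ for all sufficiently large $N$. Throughout, whenever additive closure is invoked the resulting sum lies in $\Delta$ except possibly for a single value of the free index (when the sum would vanish), which does not affect conclusions about ``infinitely many'' shifts.

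For \textbf{(a)}, assume $\alpha$ bad, so $\alpha-k\delta\in P$ for infinitely many $k>0$; adding $\beta\in P$ gives $(\alpha+\beta)-k\delta\in P$ for all but at most one such $k$, hence $\alpha+\beta$ is bad. Part \textbf{(c)} is then immediate: if $\alpha-\beta$ were bad with $\beta$ good, (a) would force $(\alpha-\beta)+\beta=\alpha$ to be bad.

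For \textbf{(b)}, suppose for contradiction that $\alpha+\beta$ is bad. Using goodness of $\alpha$, fix $N>0$ with $-\alpha+N\delta\in P$. For each of the infinitely many $k>0$ with $(\alpha+\beta)-k\delta\in P$, additive closure yields $\beta+(N-k)\delta\in P$; restricting to $k>N$ produces infinitely many $m>0$ with $\beta-m\delta\in P$, contradicting goodness of $\beta$. This is the delicate step of the lemma: one trades badness of the sum for badness of a summand by inserting the ``subtractor'' $-\alpha+N\delta$ provided by goodness of the other summand.

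For \textbf{(d)}, $\beta$ bad yields infinitely many $m>0$ with $\alpha-m\delta\in P$ (by rewriting $\beta-j\delta=\alpha-(j-k)\delta$); note $\alpha\neq 0$, since a positive multiple of $\delta$ is automatically good. Adding $\delta$ to $\alpha-m\delta$ repeatedly, keeping each intermediate sum nonzero because $\alpha\in\dot{\Delta}$, gives $\alpha\in P$ by additive closure, and the same infinite family shows $\alpha$ is bad. For \textbf{(e)}, write $\beta=\alpha+k\delta$ good and note $\alpha\neq 0$, so exactly one of $\alpha,-\alpha$ lies in $P$. If $\alpha\in P$ were bad, the reverse of (d) (infinitely many $\alpha-m\delta\in P$ transfer to infinitely many $\beta-(m+k)\delta\in P$) would force $\beta$ bad, a contradiction, so $\alpha$ is good. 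If instead $-\alpha\in P$ were bad, then $-\alpha-j\delta\in P$ for infinitely many $j>0$; adding $\beta\in P$ gives $(k-j)\delta\in P$ whenever nonzero, and for $j>k$ this is $-m\delta\in P$ with $m>0$, contradicting $m\delta\in P$ and $P\cap(-P)=\emptyset$.
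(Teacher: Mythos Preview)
Your proof is correct and uses the same underlying ideas as the paper. The paper itself does not reproduce the arguments for parts \eqref{item.lem:gbroots.1}--\eqref{item.lem:gbroots.4}, referring instead to \cite[Lemma~2.2]{JK89}; your arguments for these parts are exactly the standard ones that appear there, based on additive closure and the observation that $-m\delta\notin P$ for $m>0$. For part \eqref{item.lem:gbroots.5} the paper gives a short direct argument: it first notes (using additive closure with $\delta$) that a bad root $\gamma$ actually satisfies $\gamma+n\delta\in P$ for \emph{all} $n\in\Z$, and then uses a single well-chosen shift of $-\alpha$ to produce $-\delta\in P$. Your version avoids this ``fill-in'' step and works directly with the infinitely many shifts guaranteed by badness; both routes are equivalent and equally short.
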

\begin{proof}
The proof of \eqref{item.lem:gbroots.1}-\eqref{item.lem:gbroots.4} can be found in \cite[Lemma~2.2]{JK89}. Part \eqref{item.lem:gbroots.5}: if $\alpha\in P$ is bad, then $\alpha+n\delta\in P$ for all $n\in \Z$, which implies that $\beta+n\delta\in P$ for all $n\in \Z$. Hence $\beta$ is bad. If $-\alpha\in P$ is bad, then, in particular $-\alpha-(n_\beta+1)\delta\in P$, which implies that $(-\alpha-(n_\beta+1)\delta)+\beta=-\delta\in P$ what is a contradiction, since $\delta\in P$. Thus $-\alpha$ must be good. 
\end{proof}

\begin{lem}\label{lem:groots}
Let $P$ be a parabolic partition of $\D$, $\dot{\S}$ be a system of simple roots of $\dot{\D}$ such that $\dot{\D}^+(\dot{\S})\subseteq P$, and $P_\st : = P_\st(\dot{\S})$. Then $|G(P)\cap -P_\st|<\infty$. Moreover, if $|G(P)\cap -P_\st|>0$, then there exists a good root $\beta=\alpha+s\delta$ such that $r_\beta(P):=P'$ is well defined and $|G(P')\cap -P_\st|< |G(P)\cap -P_\st|$.
\end{lem}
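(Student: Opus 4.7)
The proof splits naturally into the two parts. For (1), I would first observe that since $\dot\D^+(\dot\S)\subseteq P$ and $P\cap(-P)=\emptyset$, we have $\dot\D^-(\dot\S)\cap P=\emptyset$, so every element of $G(P)\cap(-P_\st)$ has the form $\alpha+n\delta$ with $\alpha\in\dot\D\cup\{0\}$ and $n<0$. For each fixed $\alpha$, if infinitely many such $n$ satisfied $\alpha+n\delta\in G(P)$, picking $n_1>n_2>\cdots$ (all $<0$) would contradict goodness of $\alpha+n_1\delta$, since each $\alpha+n_i\delta$ with $i\geq 2$ lies in $(\alpha+n_1\delta+\Z_{<0}\delta)\cap P$. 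Finiteness of $\dot\D\cup\{0\}$ then yields part (1).

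For (2), assuming $G(P)\cap(-P_\st)\neq\emptyset$, I would choose $\beta=\alpha+s\delta$ minimal in this finite set under the partial order $\gamma\leq\gamma'$ iff $\gamma'-\gamma$ is a non-negative integer linear combination of roots in $P$. The next step is to verify that $\beta$ is a simple root of $P$, in the sense that it cannot be written as a sum of two roots in $P$. If $\beta=\gamma_1+\gamma_2$ with $\gamma_1,\gamma_2\in P$, then Lemma~\ref{lem:gbroots}(\ref{item.lem:gbroots.1}), applied contrapositively, forces both $\gamma_i$ to be good; writing $\gamma_i=\alpha_i+n_i\delta$ with $n_1+n_2=s<0$, at least one $n_i$ is negative (the case $n_i=0$ with $\alpha_i\in\dot\D^-(\dot\S)$ is excluded since $\dot\D^-(\dot\S)\cap P=\emptyset$), so $\gamma_i\in G(P)\cap(-P_\st)$ with $\gamma_i<\beta$ strictly, contradicting minimality. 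Invoking the super-reflection theory of \cite{Ser11}, this simplicity implies that $P':=r_\beta(P)=(P\setminus\{\beta,2\beta\})\cup\{-\beta,-2\beta\}$ is a parabolic partition.

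To conclude the strict drop, $-\beta$ and $-2\beta$ have $\delta$-coefficient $-s>0$ and thus lie in $P_\st$, not $-P_\st$; and for each $\gamma\in P\cap P'$ the sets $(\gamma+\Z_{<0}\delta)\cap P$ and $(\gamma+\Z_{<0}\delta)\cap P'$ differ by a subset of $\{\pm\beta,\pm 2\beta\}$, so $\gamma$ is good in $P$ iff good in $P'$. Hence $G(P')\cap(-P_\st)=(G(P)\cap(-P_\st))\setminus\{\beta,2\beta\}$, giving the desired strict inequality.

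The main obstacle is the step that simplicity of $\beta$ yields a parabolic partition $P'$: in the super setting one must verify additive closure of $r_\beta(P)$ by a case analysis on the parity (and isotropy) of $\beta$, particularly handling the even root $2\beta$ when $\beta$ is odd non-isotropic. The minimality of $\beta$ in $G(P)\cap(-P_\st)$ is precisely what rules out the problematic decompositions in $P$ that would obstruct additive closure.
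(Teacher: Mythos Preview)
Your approach is correct and genuinely different from the paper's. The paper proceeds by first giving an explicit structural description of $G(P)$: it defines $X(P)=\{\alpha\in\dot\S : \alpha\text{ good in }P\}$, shows that for each $\alpha\in\dot\D^+(X(P))$ there is an integer $s_\alpha\le 0$ with $(\alpha+\Z\delta)\cap P=\{\alpha+k\delta : k\ge s_\alpha\}$, and then reads off $|G(P)\cap(-P_\st)|=\sum_\alpha|s_\alpha|$. For the reflection it uses a height function on $\dot\D^+(X(P))$ to locate a root $\alpha+s_\alpha\delta$ at which the reflection is well defined. Your argument bypasses this structural analysis entirely: finiteness comes straight from the definition of a good root plus $|\dot\D|<\infty$, and the reflection root is produced by a minimality argument on the finite set $G(P)\cap(-P_\st)$. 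The paper's route is more constructive (it essentially gives an algorithm and an exact count), while yours is cleaner and more conceptual.

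Two technical points are worth tightening. First, for your partial order to have minimal elements in the required sense you need antisymmetry, i.e.\ $\Z_{\ge 0}P\cap(-\Z_{\ge 0}P)=\{0\}$. This holds for any parabolic partition of $\D$ (it follows, for instance, from the description via linear functionals in \cite{DFG09}, or by checking it directly for the normal forms $P(\dot\S,X)$), but it deserves a sentence. Second, your deferral to \cite{Ser11} for ``simplicity of $\beta$ $\Rightarrow$ $r_\beta(P)$ is a parabolic partition'' is not quite the right citation: Serganova treats simple roots of finite-dimensional root systems, not simple roots of an arbitrary affine parabolic partition. The verification is elementary, however: the only problematic case in checking additive closure of $P'$ is $\mu+\nu=2\beta$ with $\mu,\nu\in P\setminus\{\beta,2\beta\}$, and here one checks (case by case on the finite root system, as you anticipate) that $\mu-\beta\in\D$, whence either $\mu-\beta\in P$ or $\beta-\mu\in P$ gives a decomposition $\beta=\nu+(\beta-\nu)$ or $\beta=\mu+(\beta-\mu)$ contradicting simplicity. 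With these two points filled in, your argument is complete.
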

\begin{proof}
Let 
	\[
X(P)=\{\alpha\in \dot{\S}\mid \alpha \text{ is a good root of }P\},\text{ and }\D^+(X(P))=\langle X(P)\rangle.
	\] 
Notice that for each $\alpha\in \D^+(X(P))$ there exists an integer $s_\alpha\in\Z_{\leq 0}$ such that $(\alpha+\Z\delta)\cap P=\{\alpha+k\delta\mid k\geq s_\alpha\}$. It follows now from Lemma~\ref{lem:gbroots} along with the fact that $\dot{\D}^+(\dot{\S})\subseteq P$, that the set of good roots of $P$ is as follows
	\[
G(P)=\{\alpha+n_\alpha\delta,\ -\alpha+m_\alpha\delta,\ k\delta\mid \alpha\in \D^+(X(P)),\ n_\alpha\geq s_\alpha,\ m_\alpha > -s_\alpha,\ k>0\}.
	\]
In particular, 
	\[
|G(P)\cap -P_\st|=\sum_{\alpha\in \D^+(X(P))}|s_\alpha|<\infty.
	\] 
	
If $X(P)=\emptyset$, then $P=P(\dot{\S},\emptyset)=P_\nat(\dot{\S})$ and clearly $G(P) \subseteq P_\st$. Suppose that  $X(P)\neq \emptyset$, and define
\begin{align*}
\D^+(X(P))_z & = \{\alpha\in \D^+(X(P))\mid \hgt(\alpha)=z\},\quad \text{for all}\quad z\in \Z, \\ 
D_z(P)& = \{\alpha\in \D^+(X(P))_z\mid \alpha=\beta+\gamma, \\
& \beta\in \D^+(X(P))_{z-1},\ \gamma\in X(P), \text{ and }s_\alpha=s_{\beta}+s_{\gamma}\}\quad  \text{for all}\quad z\in \Z_{\geq 2}, \\
m_P& = \max\{z\in \Z_+\mid \D^+(X(P))_z\setminus D_z(P)\neq \emptyset\}.
\end{align*}

If $\D^+(X(P))_z\setminus D_z(P)= \emptyset$, for all $z\geq 2$, then the reflection $r_{\alpha-s_\alpha\delta}$ is well defined for any $\alpha\in X(P)$. On the other hand, if $\D^+(X(P))_z\setminus D_z(P)\neq \emptyset$, for some $z\in \Z_{\geq 2}$, then the reflection $r_{\alpha+s_\alpha\delta}$ is well defined for any $\alpha\in \D^+(X(P))_{m_P}\setminus D_{m_P}(P)$. In any case, we get a parabolic partition $P'$ for which $|G(P')\cap -P_\st|<|G(P)\cap -P_\st|$.
\end{proof}
\begin{cor}\label{cor:groots}
Let $P$ be a parabolic partition of $\D$, and $\dot{\S}$ be a system of simple roots of $\dot{\D}$ such that $\dot{\D}^+(\dot{\S})\subseteq P$. Then there exists a chain of even and odd reflections that sends $P$ to a parabolic partition $P'$ such that $G(P')\subseteq P_\st(\dot{\S})$.
\end{cor}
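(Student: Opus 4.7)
The plan is to iterate Lemma~\ref{lem:groots} via induction on the non-negative integer $n(P) := |G(P) \cap (-P_\st(\dot{\S}))|$, which is finite by the first assertion of that lemma.

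For the base case $n(P)=0$: since $\D = P_\st(\dot{\S}) \sqcup (-P_\st(\dot{\S}))$ as a disjoint union, the vanishing $G(P) \cap (-P_\st(\dot{\S})) = \emptyset$ forces $G(P) \subseteq P_\st(\dot{\S})$, and the empty chain of reflections suffices (take $P' = P$).

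For the inductive step with $n(P) > 0$, Lemma~\ref{lem:groots} supplies a good root $\beta = \alpha + s\delta$ of $P$ such that the reflected partition $P'' := r_\beta(P)$ is again a parabolic partition satisfying $n(P'') < n(P)$. Applying the inductive hypothesis to $P''$ produces a chain of even and odd reflections sending $P''$ to some $P'$ with $G(P') \subseteq P_\st(\dot{\S})$; prepending $r_\beta$ to this chain yields the required chain starting from $P$.

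The one technical point requiring care, and what I expect to be the main obstacle, is verifying that Lemma~\ref{lem:groots} remains applicable to $P''$, i.e., that the hypothesis $\dot{\D}^+(\dot{\S}) \subseteq P''$ is preserved under the chosen reflection. Inspecting the proof of Lemma~\ref{lem:groots}, the reflecting root has the form $\alpha \pm s_\alpha \delta$ with $\alpha \in X(P) \subseteq \dot{\S}$ and $s_\alpha \in \Z_{\leq 0}$; whenever $s_\alpha \neq 0$ this root has nonzero $\delta$-component, so it lies outside $\dot{\D}$ and hence outside $\dot{\D}^+(\dot{\S})$, ensuring no simple positive finite root is negated. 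The boundary case $s_\alpha = 0$ is the delicate one: here one would need to argue either that the construction in Lemma~\ref{lem:groots} can be steered away from such reflections (for instance, by choosing $\alpha$ of maximal height in $\D^+(X(P))_{m_P}\setminus D_{m_P}(P)$ when that set is nonempty and checking the sign of $s_\alpha$), or else to refresh the ambient simple system via Lemma~\ref{lem:pos.system.in.par.sets} applied to $P''$ while verifying that $P_\st(\dot{\S})$ itself remains the correct reference for the counting. Once this subtlety is resolved, the induction closes and produces the desired chain of reflections.
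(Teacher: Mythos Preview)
Your induction on $n(P)=|G(P)\cap(-P_{\st}(\dot\Sigma))|$ is exactly the argument the paper has in mind; the corollary is stated without proof and is meant to follow immediately by iterating Lemma~\ref{lem:groots}.

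The one point you flag as an obstacle is not actually one, and can be dispatched without the case analysis you sketch. The lemma asserts the existence of a good root $\beta=\alpha+s\delta$ (with $\alpha\in\dot\Delta^+(X(P))\subseteq\dot\Delta^+(\dot\Sigma)$) such that the reflection \emph{strictly} decreases the count. But a root $\alpha+s\delta$ with $\alpha\in\dot\Delta^+(\dot\Sigma)$ lies in $-P_{\st}(\dot\Sigma)$ if and only if $s<0$; when $s=0$ the root $\beta=\alpha$ sits in $P_{\st}(\dot\Sigma)$, so removing it cannot shrink $G(P)\cap(-P_{\st})$ (and in fact $-\alpha$ is then added to $G(P'')\cap(-P_{\st})$). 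Hence the strict inequality in the lemma already forces $s<0$. Since both $\beta$ and $2\beta$ (when the latter is a root) then have nonzero $\delta$-component, the reflection $r_\beta$ leaves $P\cap\dot\Delta$ untouched, and in particular $\dot\Delta^+(\dot\Sigma)\subseteq P''$ is preserved. The induction therefore closes with no further work.
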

\begin{example}
Let $\cG = \frak{osp}(2,4)^{\widehat{ \ }}$, and let $\dot{\S} = \{\alpha_1, \alpha_2, \alpha_3\}$ be a distinguished set of simple roots of $\frak{osp}(2,4)$. If one denotes by $\theta$ the highest root of $\g$ with respect to $\dot{\S}$, then the distinguished Dynkin diagrams are given as follows
	\[ 
\frak{osp}(2,4) :   \arraycolsep=-1pt \qquad \begin{array}[]{rcccccccccccccccccccccc}  
    \bigotimes & - & \bigcirc &  \Leftarrow & \bigcirc \\
   \alpha_1 && \alpha_2 && \alpha_3  \end{array} \qquad \qquad \displaystyle{\theta = \alpha_1  +2 \alpha_2 + \alpha_3}
    \]
	\[ 
\frak{osp}(2,4)^{\widehat{ \ }} :   \arraycolsep=-1pt \qquad \begin{array}[]{rcccccccccccccccccccccc}

\alpha_0 &  \\
  \bigotimes\\[-.5ex]
| & \diagdown \\
      \bigotimes & - & \bigcirc & \Leftarrow & \bigcirc \\
   \alpha_1 && \alpha_2 && \alpha_3 \end{array} \qquad \qquad \displaystyle{\delta = \alpha_0 + \alpha_1 +2 \alpha_2 + \alpha_3}.
    \]
The distinguished set of positive roots $\dot{\D}^+(\dot{\S})$ of $\frak{osp}(2,4)$ is
\begin{gather*}
\dot{\D}^+(\dot{\S})=\{\alpha_1,\ \alpha_2,\ \alpha_3,\ \alpha_{1}+\alpha_2,\ \alpha_2+\alpha_3,\ \alpha_1+\alpha_2+\alpha_3,\ 2\alpha_2+\alpha_3,\ \theta\}, \\
\dot{\D}^+_{\bar 0}(\dot{\S})=\{2\alpha_2+\alpha_3,\ \alpha_3,\ \alpha_2+\alpha_3,\ \alpha_2\}, \quad \dot{\D}^+_{\bar 1}(\dot{\S})=\{\alpha_1,\ \alpha_1+\alpha_2,\ \alpha_2+\alpha_3,\ \theta\}.
\end{gather*}
The set 
\begin{align*}
P & = \{\alpha_1 + n\delta\mid n\in \Z_{\geq 0}\}\cup\{-\alpha_1+n\delta\mid n\in\Z_{\geq 1}\} \\
& \cup \{\alpha_2 + n\delta\mid n\in \Z_{\geq 0}\}\cup\{-\alpha_2+n\delta\mid n\in\Z_{\geq 1}\} \\
& \cup \{\alpha_3 + n\delta\mid n\in \Z_{\geq -1}\}\cup\{-\alpha_3+n\delta\mid n\in\Z_{\geq 2}\} \\
& \cup \{\alpha_1+\alpha_2 + n\delta\mid n\in \Z_{\geq 0}\}\cup \{-\alpha_1-\alpha_2 + n\delta\mid n\in \Z_{\geq 1}\} \\
& \cup \{\alpha_2+\alpha_3 + n\delta\mid n\in \Z_{\geq -1}\}\cup \{-\alpha_2-\alpha_3 + n\delta\mid n\in \Z_{\geq 2}\} \\
& \cup \{\alpha_1+\alpha_2+\alpha_3 + n\delta\mid n\in \Z_{\geq -2}\}\cup \{-\alpha_1-\alpha_2 - \alpha_3 + n\delta\mid n\in \Z_{\geq 3}\} \\
& \cup \{2\alpha_2+\alpha_3 + n\delta\mid n\in \Z_{\geq -1}\}\cup \{-2\alpha_2-\alpha_3 + n\delta\mid n\in \Z_{\geq 2}\} \\
& \cup \{\theta + n\delta\mid n\in \Z_{\geq -2}\}\cup \{-\theta+ n\delta\mid n\in \Z_{\geq 3}\} \\
\end{align*}
is a parabolic partition of $\D$, where $G(P)=P$ and $X=\dot{\S}$. A chain of reflections satisfying Corollary~\ref{cor:groots} can be given by
\begin{gather*}
P \overset{r_{\alpha_1+\alpha_2+\alpha_3-2\delta}}{\rightarrow} P_1 \overset{r_{\theta-2\delta}}{\rightarrow} P_2 \overset{r_{\alpha_3-\delta}}{\rightarrow} P_3 \\ 
\overset{r_{\alpha_2+\alpha_3-\delta}}{\rightarrow} P_4  \overset{r_{2\alpha_2+\alpha_3-\delta}}{\rightarrow} P_5 \overset{r_{\alpha_1+\alpha_2+\alpha_3-\delta}}{\rightarrow} P_6 \overset{r_{\theta-\delta}}{\rightarrow} P_7=P'.
\end{gather*}
Observe that such a chain is not unique. Indeed, 
\begin{gather*}
P \overset{r_{\alpha_1+\alpha_2+\alpha_3-2\delta}}{\rightarrow} P_1 \overset{r_{\theta-2\delta}}{\rightarrow} P_2 \overset{r_{\alpha_3-\delta}}{\rightarrow} P_3 \\ 
\overset{r_{\alpha_2+\alpha_3-\delta}}{\rightarrow} P_4  \overset{r_{\alpha_1+\alpha_2+\alpha_3-\delta}}{\rightarrow} P_5 \overset{r_{2\alpha_2+\alpha_3-\delta}}{\rightarrow} P_6 \overset{r_{\theta-\delta}}{\rightarrow} P_7=P'.
\end{gather*}
is another chain satisfying Corollary~\ref{cor:groots}.

\details{
\begin{align*}
P \overset{r_{\alpha_1+\alpha_2+\alpha_3-2\delta}}{\rightarrow} P_1 & = \{\alpha_1,\ldots\}\cup\{-\alpha_1+\delta,\ldots\} \\
& \cup \{\alpha_2,\ldots\}\cup\{-\alpha_2+\delta,\ldots\} \\
& \cup \{\alpha_3-\delta,\ldots\}\cup\{-\alpha_3+2\delta,\ldots\} \\
& \cup \{\alpha_1+\alpha_2,\ldots\}\cup\{-\alpha_1-\alpha_2+\delta,\ldots\} \\
& \cup \{\alpha_2+\alpha_3-\delta,\ldots\}\cup\{-\alpha_2-\alpha_3+2\delta,\ldots\} \\
& \cup \{\alpha_1+\alpha_2+\alpha_3-\delta,\ldots\}\cup\{-\alpha_1-\alpha_2-\alpha_3+2\delta,\ldots\} \\
& \cup \{2\alpha_2+\alpha_3-\delta,\ldots\}\cup\{-2\alpha_2-\alpha_3+2\delta,\ldots\} \\
& \cup \{\theta-2\delta,\ldots\}\cup\{-\theta+3\delta,\ldots\} \\
\end{align*}
\begin{align*}
P_1 \overset{r_{\theta-2\delta}}{\rightarrow} P_2 & = \{\alpha_1,\ldots\}\cup\{-\alpha_1+\delta,\ldots\} \\
& \cup \{\alpha_2,\ldots\}\cup\{-\alpha_2+\delta,\ldots\} \\
& \cup \{\alpha_3-\delta,\ldots\}\cup\{-\alpha_3+2\delta,\ldots\} \\
& \cup \{\alpha_1+\alpha_2,\ldots\}\cup\{-\alpha_1-\alpha_2+\delta,\ldots\} \\
& \cup \{\alpha_2+\alpha_3-\delta,\ldots\}\cup\{-\alpha_2-\alpha_3+2\delta,\ldots\} \\
& \cup \{\alpha_1+\alpha_2+\alpha_3-\delta,\ldots\}\cup\{-\alpha_1-\alpha_2-\alpha_3+2\delta,\ldots\} \\
& \cup \{2\alpha_2+\alpha_3-\delta,\ldots\}\cup\{-2\alpha_2-\alpha_3+2\delta,\ldots\} \\
& \cup \{\theta-\delta,\ldots\}\cup\{-\theta+2\delta,\ldots\} \\
\end{align*}
\begin{align*}
P_2 \overset{r_{\alpha_3-\delta}}{\rightarrow} P_3 & = \{\alpha_1,\ldots\}\cup\{-\alpha_1+\delta,\ldots\} \\
& \cup \{\alpha_2,\ldots\}\cup\{-\alpha_2+\delta,\ldots\} \\
& \cup \{\alpha_3,\ldots\}\cup\{-\alpha_3+\delta,\ldots\} \\
& \cup \{\alpha_1+\alpha_2,\ldots\}\cup\{-\alpha_1-\alpha_2+\delta,\ldots\} \\
& \cup \{\alpha_2+\alpha_3-\delta,\ldots\}\cup\{-\alpha_2-\alpha_3+2\delta,\ldots\} \\
& \cup \{\alpha_1+\alpha_2+\alpha_3-\delta,\ldots\}\cup\{-\alpha_1-\alpha_2-\alpha_3+2\delta,\ldots\} \\
& \cup \{2\alpha_2+\alpha_3-\delta,\ldots\}\cup\{-2\alpha_2-\alpha_3+2\delta,\ldots\} \\
& \cup \{\theta-\delta,\ldots\}\cup\{-\theta+2\delta,\ldots\} \\
\end{align*}
\begin{align*}
P_3 \overset{r_{\alpha_2+\alpha_3-\delta}}{\rightarrow} P_4 & = \{\alpha_1,\ldots\}\cup\{-\alpha_1+\delta,\ldots\} \\
& \cup \{\alpha_2,\ldots\}\cup\{-\alpha_2+\delta,\ldots\} \\
& \cup \{\alpha_3,\ldots\}\cup\{-\alpha_3+\delta,\ldots\} \\
& \cup \{\alpha_1+\alpha_2,\ldots\}\cup\{-\alpha_1-\alpha_2+\delta,\ldots\} \\
& \cup \{\alpha_2+\alpha_3,\ldots\}\cup\{-\alpha_2-\alpha_3+\delta,\ldots\} \\
& \cup \{\alpha_1+\alpha_2+\alpha_3-\delta,\ldots\}\cup\{-\alpha_1-\alpha_2-\alpha_3+2\delta,\ldots\} \\
& \cup \{2\alpha_2+\alpha_3-\delta,\ldots\}\cup\{-2\alpha_2-\alpha_3+2\delta,\ldots\} \\
& \cup \{\theta-\delta,\ldots\}\cup\{-\theta+2\delta,\ldots\} \\
\end{align*}
\begin{align*}
P_4 \overset{r_{2\alpha_2+\alpha_3-\delta}}{\rightarrow} P_5 & = \{\alpha_1,\ldots\}\cup\{-\alpha_1+\delta,\ldots\} \\
& \cup \{\alpha_2,\ldots\}\cup\{-\alpha_2+\delta,\ldots\} \\
& \cup \{\alpha_3,\ldots\}\cup\{-\alpha_3+\delta,\ldots\} \\
& \cup \{\alpha_1+\alpha_2,\ldots\}\cup\{-\alpha_1-\alpha_2+\delta,\ldots\} \\
& \cup \{\alpha_2+\alpha_3,\ldots\}\cup\{-\alpha_2-\alpha_3+\delta,\ldots\} \\
& \cup \{\alpha_1+\alpha_2+\alpha_3-\delta,\ldots\}\cup\{-\alpha_1-\alpha_2-\alpha_3+2\delta,\ldots\} \\
& \cup \{2\alpha_2+\alpha_3,\ldots\}\cup\{-2\alpha_2-\alpha_3+\delta,\ldots\} \\
& \cup \{\theta-\delta,\ldots\}\cup\{-\theta+2\delta,\ldots\} \\
\end{align*}
\begin{align*}
P_5 \overset{r_{\alpha_1+\alpha_2+\alpha_3-\delta}}{\rightarrow} P_6 & = \{\alpha_1,\ldots\}\cup\{-\alpha_1+\delta,\ldots\} \\
& \cup \{\alpha_2,\ldots\}\cup\{-\alpha_2+\delta,\ldots\} \\
& \cup \{\alpha_3,\ldots\}\cup\{-\alpha_3+\delta,\ldots\} \\
& \cup \{\alpha_1+\alpha_2,\ldots\}\cup\{-\alpha_1-\alpha_2+\delta,\ldots\} \\
& \cup \{\alpha_2+\alpha_3,\ldots\}\cup\{-\alpha_2-\alpha_3+\delta,\ldots\} \\
& \cup \{\alpha_1+\alpha_2+\alpha_3,\ldots\}\cup\{-\alpha_1-\alpha_2-\alpha_3+\delta,\ldots\} \\
& \cup \{2\alpha_2+\alpha_3,\ldots\}\cup\{-2\alpha_2-\alpha_3+\delta,\ldots\} \\
& \cup \{\theta-\delta,\ldots\}\cup\{-\theta+2\delta,\ldots\} \\
\end{align*}
\begin{align*}
P_6 \overset{r_{\theta-\delta}}{\rightarrow} P_7 & = \{\alpha_1,\ldots\}\cup\{-\alpha_1+\delta,\ldots\} \\
& \cup \{\alpha_2,\ldots\}\cup\{-\alpha_2+\delta,\ldots\} \\
& \cup \{\alpha_3,\ldots\}\cup\{-\alpha_3+\delta,\ldots\} \\
& \cup \{\alpha_1+\alpha_2,\ldots\}\cup\{-\alpha_1-\alpha_2+\delta,\ldots\} \\
& \cup \{\alpha_2+\alpha_3,\ldots\}\cup\{-\alpha_2-\alpha_3+\delta,\ldots\} \\
& \cup \{\alpha_1+\alpha_2+\alpha_3,\ldots\}\cup\{-\alpha_1-\alpha_2-\alpha_3+\delta,\ldots\} \\
& \cup \{2\alpha_2+\alpha_3,\ldots\}\cup\{-2\alpha_2-\alpha_3+\delta,\ldots\} \\
& \cup \{\theta,\ldots\}\cup\{-\theta+\delta,\ldots\} \\
\end{align*}
}
\end{example}
The next result is a generalization of \cite[Proposition~2.1]{JK89} and \cite[Theorem~2.6]{Fut92} to the super setting.
\begin{theo}\label{prop:par.part.}
Let $P$ be a parabolic partition of $\D$. Then, up to a chain of odd and even reflections, we have that $P=P(\dot{\S},X)$, for some system of simple roots $\dot{\S}$ of $\dot{\D}$ and a subset $X\subseteq \dot{\S}$.
\end{theo}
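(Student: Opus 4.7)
The plan is to exploit the finite-case correspondence between parabolic partitions of $\dot{\D}$ and systems of simple roots, together with Corollary~\ref{cor:groots}, to reduce $P$ to one of the explicit partitions $P(\dot{\S},X)$. I assume throughout that $\delta\in P$; the reflections used below all have nonzero $\delta$-component, so this property is preserved, and the opposite case $-\delta\in P$ is handled symmetrically after exchanging $P$ and $-P$.

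First I observe that $\bar P:=P\cap \dot{\D}$ is a parabolic partition of $\dot{\D}$: additive closure, $\bar P\cup(-\bar P)=\dot{\D}$ and $\bar P\cap(-\bar P)=\emptyset$ all follow from the corresponding properties of $P$ by intersecting with $\dot{\D}$. By the bijection between parabolic partitions of $\dot{\D}$ and systems of simple roots recalled at the start of Section~\ref{sec:par.sets.finite-case}, there is a unique system of simple roots $\dot{\S}$ of $\dot{\D}$ with $\dot{\D}^+(\dot{\S})=\bar P\subseteq P$. Applying Corollary~\ref{cor:groots} to $P$ and this $\dot{\S}$ yields a chain of even and odd reflections sending $P$ to a parabolic partition $P'$ with $G(P')\subseteq P_\st(\dot{\S})$. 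Each reflection produced in the proof of Lemma~\ref{lem:groots} is along a root of the form $\alpha\pm s_\alpha\delta$ with $s_\alpha\neq 0$, hence fixes every root of $\dot{\D}^+(\dot{\S})$ and every integer multiple of $\delta$; consequently $\dot{\D}^+(\dot{\S})\subseteq P'$ and $\delta\in P'$.

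Set $X:=\{\alpha\in\dot{\S}\mid \alpha\text{ is good in }P'\}$; I claim $P'=P(\dot{\S},X)$. By Lemma~\ref{lem:gbroots}(a) and (b), a root $\alpha\in\dot{\D}^+(\dot{\S})$ is good in $P'$ if and only if $\alpha\in\dot{\D}^+(X)$, so $\dot{\D}^+(\dot{\S})\setminus\dot{\D}^+(X)$ is exactly the set of bad positive roots. For such a bad $\alpha$, the definition of ``bad'' combined with $\delta\in P'$ and additive closure forces $\alpha+n\delta\in P'$ for every $n\in\Z$, and then $P'\cap(-P')=\emptyset$ rules out $-\alpha+n\delta\in P'$ for any $n$. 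For $\alpha\in\dot{\D}^+(X)$, write $s_\alpha\leq 0$ for the minimal integer with $\alpha+s_\alpha\delta\in P'$; since $\alpha+s_\alpha\delta\in G(P')\subseteq P_\st(\dot{\S})$ and $P_\st(\dot{\S})$ contains no root of the form $\beta+k\delta$ with $\beta\in\dot{\D}^+(\dot{\S})$ and $k<0$, we must have $s_\alpha=0$. Hence $\alpha+n\delta\in P'$ iff $n\geq 0$, and by $P'\cap(-P')=\emptyset$ together with closure under adding $\delta$, $-\alpha+n\delta\in P'$ iff $n>0$; finally $k\delta\in P'$ iff $k>0$. Matching these families term-by-term with the three pieces in the definition of $P(\dot{\S},X)$ gives the desired equality.

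The main obstacle is the final paragraph: the good/bad dichotomy on $\dot{\D}^+(\dot{\S})$ must be combined with the structure of the good root strings $(\alpha+\Z\delta)\cap P'$ so that every root of $P'$ is accounted for in exactly one of the three pieces of $P(\dot{\S},X)$. The first two steps are essentially bookkeeping applications of already-established results, namely the finite-case bijection and Corollary~\ref{cor:groots}.
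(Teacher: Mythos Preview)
Your proof is correct and follows essentially the same approach as the paper's: intersect $P$ with $\dot{\D}$ to extract $\dot{\S}$, apply Corollary~\ref{cor:groots} to bring the good roots inside $P_\st(\dot{\S})$, and then read off $X$ as the set of good simple roots to identify $P'$ with $P(\dot{\S},X)$. You supply somewhat more detail than the paper does---in particular your verification that the reflections of Lemma~\ref{lem:groots} have nonzero $\delta$-component (hence preserve $\dot{\D}^+(\dot{\S})\subseteq P'$ and $\delta\in P'$) and your explicit handling of the $\delta\in P$ versus $-\delta\in P$ dichotomy fill small gaps that the paper leaves implicit.
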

\begin{proof}
Notice that $\D'=P\cap \dot{\D}$ is a parabolic partition of $\dot{\D}$. Then there exists a system of simple roots $\dot{\S}\subseteq \D'$ such that $\D'=\dot{\D}^+(\dot{\S})$. In particular, $\dot{\S}\subseteq P$. Let $\alpha_0\in\Delta$ such that $\S=\dot{\S}\cup\{\alpha_0\}$ is a system of simple roots of $\Delta$ and consider $P_\st=P_\st(\dot{\S})$. By Corollary~\ref{cor:groots}, exists a chain of reflections that sends $P$ to a parabolic partition $P'$ such that $G(P')\subseteq P_\st$. In other words, up to applying a chain of reflections, we may assume that $G(P)\subseteq P_\st$. Let $X(P)$ be as in Lemma~\ref{lem:groots} and set $\D(X(P))=\langle\pm X(P)\rangle$. Under the assumption that $G(P)\subseteq P_\st$, one can write the set of good roots of $P$ as follows
	\[
G(P)=\{\alpha+n\delta\mid \alpha\in  \D(X(P))\cup\{0\},\  n\in \Z_{ > 0}\}\cup \D^+(X(P)).
	\]
This implies that $B(P)=\{\alpha+n\delta\mid \alpha\in\dot{\D}^+(\dot{\S})\setminus \dot{\D}^+(X),\ n\in \Z\}$ is the set of bad roots of $P$, and hence
	\[
P  =G(P)\cup B(P)=P(\dot{\S},X(P))
	\]
as desired.
\end{proof}

Recall that throughout the paper we reserve the term parabolic set (resp. parabolic partition) for the case  $P_0=P\cap -P \neq \emptyset$ (resp. $P_0=P\cap -P = \emptyset$). The following result describes all parabolic sets of $\D$.

\begin{prop}\label{prop:delta.notin.P_0}
Let $P$ be a parabolic set of $\D$ such that $\delta\in P\setminus (-P)$. Then there exists a system of simple roots $\dot{\S}$ of $\dot{\D}$, nonempty subsets $X\subseteq \dot{\S}$ and $S\subseteq X$, and a chain of even and odd reflections $\varphi$, such that $P=P_\varphi (\dot{\S}, X)\cup \langle -S\rangle$.
\end{prop}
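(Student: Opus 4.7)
The plan is to adapt the strategy used in the finite case (Proposition~\ref{prop:par.set}) to the affine setting by combining it with the parabolic partition description of Theorem~\ref{prop:par.part.}. The first step is to produce a system of simple roots $\dot{\S}$ of $\dot{\D}$ such that $\dot{\D}^+(\dot{\S})\subseteq P\cap\dot{\D}$; this is the affine analogue of Lemma~\ref{lem:pos.system.in.par.sets}, and follows by the same reflection argument (applying even and odd reflections to reduce $|\dot{\D}^+(\dot{\S})\cap(-P)|$ step by step). Since $P\cap\dot{\D}$ is a parabolic set of $\dot{\D}$ in its own right, applying Proposition~\ref{prop:par.set} produces a nonempty subset $S\subseteq\dot{\S}$ with
\[
P\cap\dot{\D}=\dot{\D}^+(\dot{\S})\cup\langle -S\rangle.
\]

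For the second step, I peel off the ``Levi'' contribution $\langle -S\rangle\subseteq P_0$ to recover a parabolic partition of $\D$. Set $Q:=P\setminus\langle -S\rangle$. Since $S\subseteq\dot{\S}$, both $\langle S\rangle\subseteq\dot{\D}^+(\dot{\S})\subseteq P$ and $\langle -S\rangle\subseteq P$ (by additive closure applied to $-S\subseteq P$), hence $\langle -S\rangle\subseteq P_0$. A routine check using the linear independence of $\dot{\S}$ and the additive closure of $P$ shows that $Q$ remains additively closed, and the disjointness $\dot{\D}^+(\dot{\S})\cap\langle -S\rangle=\emptyset$ together with $\delta\in P\setminus(-P)$ yields $Q\cup(-Q)=\D$ and $Q\cap(-Q)=\emptyset$. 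Thus $Q$ is a parabolic partition of $\D$ with $\delta\in Q$. Applying Theorem~\ref{prop:par.part.} to $Q$ then produces a chain $\varphi$ of even and odd reflections and a subset $X\subseteq\dot{\S}$ (possibly after reorganizing $\varphi$ so that the recovered simple system coincides with the $\dot{\S}$ already fixed) so that $Q=P_\varphi(\dot{\S},X)$.

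The last step is to verify the inclusion $S\subseteq X$. From the definition $P(\dot{\S},X)\cap\dot{\D}=\dot{\D}^+(\dot{\S})$ and the fact that $X$ is characterized as the set of simple roots of $\dot{\S}$ which become good in $Q$ after the reflection chain (cf.\ Lemma~\ref{lem:groots}), one shows that each $s\in S$ is a good root of $Q$: the root $-s$ is precisely what was moved into the $\langle -S\rangle$ component, so the intersection $(s+\Z_{<0}\delta)\cap Q$ is forced to be finite. Hence $s\in X$, giving $S\subseteq X$ and in particular $X\neq\emptyset$. Assembling the two decompositions gives the claim
\[
P=Q\cup\langle -S\rangle=P_\varphi(\dot{\S},X)\cup\langle -S\rangle.
\]
The main obstacle is this final compatibility check: one must show that the set $X$ furnished by Theorem~\ref{prop:par.part.}, applied to $Q$, can be taken relative to the same simple system $\dot{\S}$ used in Proposition~\ref{prop:par.set}, and that it contains $S$; this requires carefully tracking how $\varphi$ interacts with the simple roots of $S$ and verifying their persistence as good roots throughout the reflection chain.
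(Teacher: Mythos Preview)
Your approach reverses the order of operations relative to the paper, and this reversal hides the main difficulty. The paper first fixes a parabolic \emph{partition} $P'\subseteq P$, writes it (via Theorem~\ref{prop:par.part.}) as $P'=P_\varphi(\dot{\S},X)$, and only then analyses $P\setminus P'$. The point is that the explicit shape of $P_\varphi(\dot{\S},X)$ is already available: for every $\alpha\in\dot{\D}^+(\dot{\S})\setminus\dot{\D}^+(X)$ one has $\alpha+n\delta\in P'\subseteq P$ for \emph{all} $n\in\Z$, and for $\alpha\in\dot{\D}(X)\cup\{0\}$ one has $\alpha+n\delta\in P'$ for all large enough $n$. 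These two facts, combined with $-\delta\notin P$, immediately force any $\beta\in P\setminus P'$ to lie in $-\dot{\D}^+(X)$, and then $S:=(-P)\cap X$ does the job.

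By contrast, you extract $S$ from $P\cap\dot{\D}$ and then assert that $Q:=P\setminus\langle -S\rangle$ is a parabolic partition; but the statement $Q\cap(-Q)=\emptyset$ is equivalent to $P_0\subseteq\langle S\rangle\cup\langle -S\rangle$, and in particular to the claim that $P_0$ contains no root with nonzero $\delta$-component. This is not a ``routine check'': it is precisely the content established by the paper's case analysis, and that analysis relies on the structure of $P_\varphi(\dot{\S},X)$, which you do not have at this stage. Concretely, suppose $\mu=c+k\delta\in P_0$ with $k>0$ and $c\in\dot{\D}^-(\dot{\S})$; from closure and $\delta\in P$ one can show $\{m:c+m\delta\in P\}=\{m\geq k\}$ and $\{m:-c+m\delta\in P\}=\{m\geq -k\}$, but this alone does not yield $-\delta\in P$ without invoking further roots, and the argument you sketch does not supply them. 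The same unproved inclusion $P_0\subseteq\langle\pm S\rangle$ is also what you would need to verify additive closure of $Q$. So the step you flag as the ``main obstacle'' (the compatibility $S\subseteq X$) is in fact the easier one; the genuine gap is earlier, at the point where you declare $Q$ to be a parabolic partition.
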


\begin{proof}
Let $P'$ be a parabolic partition contained in $P$. By Proposition~\ref{prop:par.part.}, there exists a chain of even and odd reflections $\varphi$ such that $P' = P_\varphi (\dot{\S},X)$. Consider $\beta \in P\setminus P_\varphi(\dot{\S},X)$. Then $\beta\in -P_\varphi(\dot{\S},X)$ and we have the following possibilities: 
\begin{enumerate}
\item \label{item1:prop:delta.notin.P_0} $\beta=-\alpha+n_\beta\delta$ for some $\alpha\in\dot{\D}^+(\dot{\S})\setminus \dot{\D}^+(X)$ and some $n_\beta\in \Z$. Since $\alpha\in\dot{\D}^+(\dot{\S})\setminus \dot{\D}^+(X)$, we have that $\alpha+n\delta\in P_\varphi(\dot{\S},X)\subseteq P$ for all $n\in \Z$. In particular, $(-\alpha+n_\beta\delta)+(\alpha+(-n_\beta-1)\delta)=-\delta\in P$, which is a contradiction.
\item \label{item2:prop:delta.notin.P_0} $\beta=-\alpha-n_\beta\delta$ for some $\alpha\in\dot{\D}(X)\cup\{0\}$ and some $n_\beta\in \Z_{\geq k}$ with $k\in \Z_{\geq 0}$. If $\alpha\in \dot{\D}(X)$, then we have that $\alpha+n\delta\in P_\varphi(\dot{\S},X)\subseteq P$ for all $n\in \Z_{\geq -(k-1)}$, and hence $(-\alpha-n_\beta\delta)+(\alpha+(n_\beta-1)\delta)=-\delta\in P$, which is a contradiction. If $\alpha=0$, then $\beta=-n_\beta\delta\in P$, with $n_\beta\in \Z_{>0}$, and hence $(-n_\beta)\delta+(n_\beta-1)\delta=-\delta\in P$, which is again a contradiction.
\end{enumerate}
By \eqref{item1:prop:delta.notin.P_0}~and~\eqref{item2:prop:delta.notin.P_0}, we conclude that the only possibility is that $\beta=-\alpha$, for some $\alpha\in \dot{\D}^+(X)$. Since such a $\beta$ exists ($P\cap -P\neq \emptyset$), we must have $X\neq \emptyset$ and $\{-P\}\cap X=:S\neq \emptyset$. Now it is clear that $P\setminus P_\varphi (\dot{\S},X)=\langle -S\rangle$, and the result follows.
\end{proof}

\begin{prop}\label{prop:delta.in.P_0} 
Let $P$ be a parabolic set of $\D$ such that $\delta\in P\cap -P$. Then there exists a system of simple roots $\dot{\S}$ of $\dot{\D}$ and $S\subseteq \dot{\S}$ such that
	\[
P= P(\dot{\S},\emptyset)\cup \langle \{-S\} \cup\{\pm \delta\}\rangle.
	\]
\end{prop}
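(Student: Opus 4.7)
The plan is to reduce Proposition~\ref{prop:delta.in.P_0} to the already-established finite case (Proposition~\ref{prop:par.set}) by using the hypothesis $\pm\delta \in P$ to observe that $P$ is essentially determined by its ``finite part''
\[
\dot{P} := P \cap \dot{\D}.
\]
The key initial observation is that since both $\delta$ and $-\delta$ lie in $P$ and $P$ is additively closed, for any $\alpha \in \dot{\D}$ we have $\alpha \in P$ if and only if $\alpha + n\delta \in P$ for \emph{some} $n \in \Z$, if and only if $\alpha + n\delta \in P$ for \emph{every} $n \in \Z$. Together with the trivial fact that $k\delta \in P$ for all $k \in \Z \setminus \{0\}$ (generated additively by $\pm\delta$), this gives the explicit description
\[
P = \{\alpha + n\delta \mid \alpha \in \dot{P},\ n \in \Z\} \cup \{k\delta \mid k \in \Z\setminus\{0\}\}.
\]

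Next I would show $\dot{P}$ is a parabolic set or parabolic partition of $\dot{\D}$. Additive closure of $\dot{P}$ is inherited from $P$, and $\dot{P} \cup (-\dot{P}) = \dot{\D}$ follows by intersecting $P \cup (-P) = \D$ with $\dot{\D}$. So either $\dot{P} \cap (-\dot{P}) = \emptyset$, in which case $\dot{P}$ is a parabolic partition of $\dot{\D}$ and $\dot{P} = \dot{\D}^+(\dot{\S})$ for some system of simple roots $\dot{\S}$, or $\dot{P} \cap (-\dot{P}) \neq \emptyset$, in which case Proposition~\ref{prop:par.set} produces $\dot{\S}$ and a nonempty $S \subseteq \dot{\S}$ with $\dot{P} = \dot{\D}^+(\dot{\S}) \cup \langle -S\rangle$. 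In the first case I simply take $S = \emptyset$; both cases are then handled uniformly.

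The last step is to match the display in the statement. Unpacking definitions,
\[
\langle \{-S\} \cup \{\pm\delta\}\rangle = \{-\beta + n\delta \mid \beta \in \langle S\rangle,\ n \in \Z\} \cup \{k\delta \mid k \in \Z\setminus\{0\}\},
\]
while $P(\dot{\S},\emptyset) = P_{\nat}(\dot{\S}) = \{\alpha + n\delta \mid \alpha \in \dot{\D}^+(\dot{\S}),\ n \in \Z\} \cup \{k\delta \mid k > 0\}$. Taking the union and substituting $\dot{P} = \dot{\D}^+(\dot{\S}) \cup \langle -S\rangle$ recovers exactly the description of $P$ obtained in the first paragraph.

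I do not expect any serious obstacle: the content of the proposition is really just the observation that full $\delta$-shift invariance (given by $\pm\delta \in P$) trivializes the affine direction and reduces everything to $\dot{\D}$, after which Proposition~\ref{prop:par.set} does the work. The only slightly fiddly part is the final bookkeeping identifying $\langle\{-S\}\cup\{\pm\delta\}\rangle$ with the union of a $\Z\delta$-orbit of $-\langle S\rangle$ and the nonzero imaginary roots, but this is routine.
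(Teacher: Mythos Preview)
Your proof is correct and follows essentially the same idea as the paper's: use $\pm\delta\in P$ to reduce to the finite part $\dot P=P\cap\dot\Delta$ and then describe $\dot P$ via a system of simple roots and a subset $S$. The only difference is organizational: you invoke Proposition~\ref{prop:par.set} as a black box, whereas the paper reproves the height-induction step inline (defining $-S=P\cap(-\dot\Sigma)$ and inducting to show every $-\alpha\in P$ with $\alpha\in\dot\Delta^+(\dot\Sigma)$ lies in $\langle -S\rangle$); your version is the cleaner packaging of the same argument.
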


\begin{proof}
Let $\dot{\D}^+(\dot{\S})$ be a parabolic partition of $\dot{\D}$ contained in $P$. Then $\alpha+n\delta\in P$ for all $\alpha\in \dot{\D}^+(\dot{\S})$ and $n\in \Z$ (since $\pm\delta\in P$), which implies $P(\dot{\S},\emptyset)\subseteq P$. Now, notice that if $\beta=\alpha+n\delta\in P\setminus P(\dot{\S}, \emptyset)$, then $\beta\in -P(\dot{\S}, \emptyset)$ and hence $\beta=-\alpha+n\delta$ for some $\alpha\in \dot{\D}^+(\dot{\S})\subseteq P$ and some $n\in \Z$. Since $\pm\delta\in P$, we have that $-\alpha\in P$. Thus $\pm \alpha\in P$. Let $-S=P\cap (-\dot{\S})$. We claim that $-\alpha\in \langle -S \rangle$, for every $\alpha\in \dot{\D}^+(\dot{\S})$ such that $\pm\alpha\in P$. Indeed, if $\hgt(\alpha)=1$, then $-\alpha\in P\cap\{-\dot{\S}\}=-S$.  If $\alpha$ is not simple, then there exists a simple root $\gamma\in\dot{\S}$ such that $\alpha-\gamma\in\dot{\D}^+(\dot{\S})\subseteq P$. Moreover $-(\alpha-\gamma)=-\alpha+\gamma \in P$. Then $\pm(\alpha-\gamma)\in P$, and $\hgt(\alpha-\gamma)<\hgt(\alpha)$. Thus, by induction, $-(\alpha-\gamma)\in \langle -S\rangle$. On the other hand, $-\gamma=(\alpha-\gamma)-\alpha\in P\cap (-\dot{\S})=-S$. Therefore $-\alpha=-(\alpha-\gamma)-\gamma \in \langle -S\rangle$, and the claim is proved. This shows that $P\subseteq P(\dot{\S},\emptyset)\cup \langle \{-S\}\cup\{\pm \delta\}\rangle$. Since the other inclusion is clear, the result is proved.
\end{proof}

\section{Parabolic and Borel subalgebras}

Recall that throughout the paper we reserve the term parabolic set (resp. parabolic partition) for the case  $P_0=P\cap -P \neq \emptyset$ (resp. $P_0=P\cap -P = \emptyset$).

If $P$ is a parabolic partition, then
	\[
\cN_P^\pm=\bigoplus_{\alpha\in \pm P}\cG_\alpha \quad\text{and}\quad \cB_P^\pm=\cH \oplus \cN_P^\pm
	\]
are subalgebras of $\cG$.  If $P$ is a parabolic set, then
	\[
\cG^0=\bigoplus_{\alpha\in P_0}\cG_\alpha,\quad \cU_P^\pm=\bigoplus_{\alpha\in \pm P\setminus P_0}\cG_\alpha\quad\text{and}\quad \cP_P^\pm = (\cH + \cG^0) \oplus \cU_P^\pm
	\]
are subalgebras of $\cG$.  Throughout the paper we will drop the $+$ from the notation above, that is, we write $\cN_P:=\cN_P^+$, $\cB_P:=\cB_P^+$, $\cU_P = \cU_P^+$, and $\cP_P=\cP_P^+$. The superalgebra $\cB_P$ (resp. $\cP_P$) is called a \emph{Borel} (resp. \emph{parabolic}) subalgebra of $\cG$ associated to $P$. 
In the non-super setting, if $P$ is a parabolic partition, then the Borel subalgebra $\cB_P=\cH\oplus \cN_P$ is solvable if and only if $P=P_\nat$. Due to the existence of isotropic odd roots, this is no longer the case in the super setting.
\begin{prop}
Let $X=\cup_{i=1}^m X_i$ be the decomposition of $X$ into connected components, and $P=P(\dot{\S}, X)$. The Borel subalgebra $\cB_P$ is solvable if and only if either $X=\emptyset$, or $|X_i|=1$ for $i=1,\ldots, m$, in which case the only root of $X_i$ must be isotropic.
\end{prop}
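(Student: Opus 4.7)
The plan is to exploit the decomposition
\[
\cN_P \;=\; \fn_{\fl(X)}^+ \;\oplus\; \bigl(\fl(X)\otimes t\C[t]\bigr) \;\oplus\; \bigl(\fn'\otimes \C[t,t^{-1}]\bigr),
\]
read off directly from the three types of roots comprising $P(\dot{\S},X)$, where $\fl(X)=\fh\oplus \bigoplus_{\alpha\in \dot{\D}(X)}\g_\alpha$ is the Levi subsuperalgebra determined by $X$, $\fn_{\fl(X)}^+=\bigoplus_{\alpha\in \dot{\D}^+(X)}\g_\alpha$, and $\fn'=\bigoplus_{\alpha\in \dot{\D}^+(\dot{\S})\setminus \dot{\D}^+(X)}\g_\alpha$. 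The two implications are then handled separately: sufficiency by constructing a chain of ideals of $\cB_P$ with solvable successive quotients, and necessity by exhibiting an explicit non-solvable subalgebra.

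For sufficiency, assume $X=\emptyset$ or each connected component $X_i=\{\alpha_i\}$ is a single isotropic odd root. First, $\cI:=\fn'\otimes \C[t,t^{-1}]$ is a nilpotent ideal of $\cB_P$: closure under brackets with the remaining summands rests on the fact that $\dot{\D}^+(\dot{\S})\setminus \dot{\D}^+(X)$ is absorbing under addition of elements of $\dot{\D}(X)$ (any simple root outside $X$ appearing in the support of such a positive root survives), and the relevant central terms vanish since the sums of roots involved are never zero; nilpotency of $\cI$ follows from nilpotency of $\fn'$. Inside $\cB_P/\cI$, the subspace $\fl(X)\otimes t\C[t]$ is a solvable ideal: it is an ideal because $[\cH\oplus \fn_{\fl(X)}^+,\fl(X)\otimes t\C[t]]\subseteq \fl(X)\otimes t\C[t]$, and it is solvable because $\fl(X)$ is---under the singleton isotropic assumption, a direct computation using isotropy ($\alpha_i(h_{\alpha_i})=0$) and disconnection ($\alpha_j(h_{\alpha_i})=0$ and $[x_{\alpha_i},x_{\alpha_j}]=0$ for $i\ne j$) yields $\fl(X)^{(1)}=\bigoplus_i(\g_{\alpha_i}\oplus \g_{-\alpha_i}\oplus \C h_{\alpha_i})$, $\fl(X)^{(2)}=\bigoplus_i \C h_{\alpha_i}$, and $\fl(X)^{(3)}=0$---combined with the identity $(\fl(X)\otimes t\C[t])^{(k)}=\fl(X)^{(k)}\otimes t^{2^k}\C[t]$. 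The final quotient $\cH\oplus \fn_{\fl(X)}^+$ satisfies $[\fn_{\fl(X)}^+,\fn_{\fl(X)}^+]=0$ (same isotropy/disconnection reasoning, using that $\dot{\D}^+(X)=X$ in this case), hence is solvable of derived length at most $2$. The extension principle then yields that $\cB_P$ is solvable.

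For necessity, I argue contrapositively: suppose some $X_i$ has either $|X_i|\ge 2$ or $|X_i|=1$ with $\alpha_i$ non-isotropic. I first show that $\dot{\D}(X_i)$ contains an even root $\gamma$: for $|X_i|=1$ take $\gamma=\alpha_i$ (when even) or $\gamma=2\alpha_i$ (when odd non-isotropic, noting $2\alpha_i\in \dot{\D}_{\bar 0}$); if $X_i$ contains any even simple root, take that; otherwise all simple roots of $X_i$ are odd, and either some is odd non-isotropic (then take twice it) or all are odd isotropic, in which case the connectedness of $X_i$ guarantees some $\alpha,\beta\in X_i$ with $\alpha+\beta\in\dot{\D}$, necessarily even. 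The triple $\g_\gamma\oplus \C h_\gamma\oplus \g_{-\gamma}\subseteq \fl(X_i)$ is then an $\fsl(2)$-subalgebra, so $\fsl(2)\otimes t\C[t]\subseteq \fl(X_i)\otimes t\C[t]\subseteq \cB_P$ is a subalgebra---its closure under brackets is immediate since no central term arises on strictly positive loop levels---and it is non-solvable, because $(\fsl(2)\otimes t\C[t])^{(k)}=\fsl(2)\otimes t^{2^k}\C[t]\ne 0$ for all $k$. Hence $\cB_P$ contains a non-solvable subalgebra and is itself non-solvable. The main subtlety is the existence of the even root $\gamma$ in the all-isotropic connected case, which relies on the general fact that two connected odd isotropic simple roots have a non-zero even sum.
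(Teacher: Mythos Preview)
Your proof is correct. Both you and the paper argue necessity by exhibiting a non-solvable loop subalgebra and sufficiency by showing the relevant pieces are solvable/nilpotent, but the details differ in ways worth noting.

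For necessity, the paper runs three separate cases: a non-isotropic root in $X_i$ gives a copy of $\fsl(2)\otimes t\C[t]$ or $\fosp(1,2)\otimes t\C[t]$, while two connected isotropic roots give a copy of $\fsl(1,2)\otimes t\C[t]$. You instead reduce all cases to locating a single \emph{even} root $\gamma\in\dot{\D}(X_i)$ and hence a uniform $\fsl(2)\otimes t\C[t]$ subalgebra. This is a mild streamlining: one non-solvability verification instead of three. The point you flag as subtle --- that two connected odd isotropic simple roots $\alpha,\beta$ have $\alpha+\beta\in\dot{\D}$ --- is indeed true and can be read off from the odd reflection formula recalled in the paper (since $r_\alpha(\beta)=\alpha+\beta$ lands in the new base $\dot{\S}_\alpha\subseteq\dot{\D}$).

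For sufficiency, the paper is quite terse: it observes that each singleton isotropic component contributes a nilpotent $\fsl(1,1)\otimes t\C[t]$ and appeals to solvability of $\cB_{P(\dot{\S},\emptyset)}$ for the rest. Your chain-of-ideals argument ($\cI=\fn'\otimes\C[t,t^{-1}]$ nilpotent, then $\fl(X)\otimes t\C[t]$ solvable modulo $\cI$, then the abelian-by-nilpotent remainder) makes this precise, and your explicit derived series computation for $\fl(X)$ in the singleton-isotropic case, together with the identity $(\fl(X)\otimes t\C[t])^{(k)}\subseteq \fl(X)^{(k)}\otimes t^{2^k}\C[t]$, fills in what the paper leaves implicit.
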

\begin{proof}
The case $X=\emptyset$ is clear. Suppose then $X\neq \emptyset$ and that some $X_i$ has at least one non-isotropic root $\beta$. Now we have two cases: either $\langle \pm \beta\rangle=\{\pm \beta\}$, and we have a copy of $\frak{sl}(2)\otimes t\C[t]$ in $\cB_P$; or $\langle  \pm \beta\rangle=\{\pm \beta, \pm 2\beta\}$, and we have a copy of $\frak{osp}(1,2)\otimes t\C[t]$ in $\cB_P$. In both cases one see that $\cB_P$ is not solvable. Suppose now that all roots in $X_i$ are isotropic. If $|X_i|>1$, then it has two isotropic roots that are connected. In this case we have a copy of $\frak{sl}(1,2)\otimes t\C[t]$ in $\cB_P$, which implies that $\cB_P$ is not solvable. Finally, if $|X_i|=1$, then the only element in $X_i$ yields a copy of $\frak{sl}(1,1)\otimes t\C[t]$, which is nilpotent. This along with the fact that $\cB_{P(\dot{\S}, \emptyset)}$ is solvable implies that $\cB_P$ is solvable (the peaces in $\cB_P$ that are not like $\frak{sl}(1,1)\otimes t\C[t]$ are like $\cB_{P(\dot{\S}, \emptyset)}$).
\end{proof}

Let $P$ be a parabolic set, and let $\cG^0$ be the Lie subalgebra generated by the root spaces $\cG_\alpha$, with $\alpha\in P_0$. Further, notice that
	\[
\fH := \C K\oplus \bigoplus_{k\in \Z \setminus \{0\}}  \cG_{k\delta}
	\]
is a Heisenberg Lie algebra contained in $\cG$. The next result describes the structure of $\cG^0$.

\begin{theo}\label{thm:descr.of.G_0}
\begin{enumerate} 
\item \label{item.thm:descr.of.G_0.1} If $\delta\notin P_0$, then
	\[
\cG^0 = \bigoplus_{i=1}^m \frak{g}^i,
	\]
where each $\frak{g}^i$ is either isomorphic to $\fsl(m,n)$ with $m\neq n$, $\fosp(m,n)$, $D(2,1;\alpha)$ ($a\neq 0,-1$), $G(3)$, $F(4)$, or to $\frak{sl}(1,1)$, and $[\frak{g}^i,\frak{g}^j]=0$ if $i\neq j$.
\item \label{item.thm:descr.of.G_0.2} If $\delta\in P_0$, then $\cG^0=\cG^1+\cG^2$, with
	\[
\cG^1=\bigoplus_{i=1}^m L(\g^i)\oplus \C K,
	\]
where each $\frak{g}^i$ is either isomorphic to $\fsl(m,n)$ with $m\neq n$, $\fosp(m,n)$, $D(2,1;\alpha)$ ($a\neq 0,-1$), $G(3)$, $F(4)$, or to $\frak{sl}(1,1)$, $[\frak{g}^i,\frak{g}^j]=0$ if $i\neq j$; and $\cG^2\subseteq \fH$ is such that $\cG^2+(\cG^1\cap \fH)=\fH$. Moreover, $\cG^2$ can be chosen so that $[\cG^1, \cG^2]=0$ if and only if $\frak{g}^i\ncong \frak{sl}(1,1)$, for all $i=1,\dots, m$.
\end{enumerate}
\end{theo}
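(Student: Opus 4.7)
The plan is to identify $P_0 = P \cap (-P)$ explicitly using the normal forms for parabolic sets from Propositions~\ref{prop:delta.notin.P_0} and~\ref{prop:delta.in.P_0}, and then read off the structure of $\cG^0$ from the sub-Dynkin diagram cut out by the subset $S$ appearing there.

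For part~\eqref{item.thm:descr.of.G_0.1}, Proposition~\ref{prop:delta.notin.P_0} writes $P$ (after a chain of reflections, which does not affect the isomorphism type of $\cG^0$) as $P(\dot\S, X) \cup \langle -S\rangle$ for some $\emptyset\neq S\subseteq X\subseteq \dot\S$. Matching the explicit form of $P(\dot\S,X)$ against $-P$ term by term, one checks that $P_0 = \langle \pm S\rangle\subseteq \dot\D$, so $\cG^0$ is a finite-dimensional Lie subsuperalgebra sitting inside $\g\otimes 1$. The next step is to decompose $S = \bigsqcup_{i=1}^m S_i$ into connected components of the Dynkin subdiagram of $\dot\S$ spanned by $S$; roots supported on different $S_i$ cannot sum to a root of $\dot\D$, so the subsuperalgebras $\g^i$ generated by the root spaces in $\langle \pm S_i\rangle$ pairwise commute and $\cG^0 = \bigoplus_i \g^i$. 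Each $\g^i$ is then determined by its Dynkin diagram $S_i$, and running through the connected subdiagrams of the distinguished diagrams of $\fgl(m,n)$, $\fsl(m,n)$, $\fosp(m,n)$, $D(2,1;\alpha)$, $G(3)$, $F(4)$ (and their images under reflections) produces exactly the types in the list, with $\fsl(1,1)$ arising precisely when $S_i$ consists of a single isotropic odd root.

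For part~\eqref{item.thm:descr.of.G_0.2}, Proposition~\ref{prop:delta.in.P_0} gives $P = P(\dot\S,\emptyset)\cup \langle\{-S\}\cup\{\pm\delta\}\rangle$ for some $S\subseteq \dot\S$. A direct calculation yields
\[
P_0 = \{\alpha + n\delta \mid \alpha \in \langle\pm S\rangle \cup \{0\},\ n\in\Z\}\setminus\{0\}.
\]
Writing $S = \bigsqcup_{i=1}^m S_i$ again and letting $\g^i$ be the finite-dimensional subsuperalgebra of $\g$ generated by the root spaces attached to $\langle \pm S_i\rangle$ (so with the same list of possibilities as in part~\eqref{item.thm:descr.of.G_0.1}), the loop subalgebras $L(\g^i)$ together with $\C K$ account for every element of $\cG^0$ outside $\fH$. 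Setting $\cG^1 = \bigoplus_i L(\g^i)\oplus \C K$ and choosing $\cG^2$ to be any graded vector space complement of $\cG^1\cap \fH$ inside $\fH$ then gives $\cG^0 = \cG^1 + \cG^2$ with $\cG^2\subseteq \fH$ and $\cG^2 + (\cG^1\cap \fH) = \fH$ by construction.

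The final and most delicate step is the commutativity criterion. Using the relations $[h(m), x(n)] = \alpha(h)\, x(m+n)$ for $x\in \g_\alpha$ together with $[h(m), h'(n)] = m\delta_{m,-n}(h|h')K$, one sees that $\cG^2$ can be arranged to commute with $\cG^1$ exactly when a vector space complement $\fh^c$ of $\sum_i \fh^i$ in $\fh$ can be chosen so that $\fh^c$ lies in the common kernel of every root of every $\g^i$ and $(\fh^c | \fh^i) = 0$ for all $i$. When every $\g^i$ is one of the listed basic classical superalgebras, the form restricts non-degenerately to $\fh^i$ and the orthogonal complement of $\sum_i \fh^i$ in $\fh$ supplies such an $\fh^c$. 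When, however, some $\g^i\cong \fsl(1,1)$, we have $\fh^i = \C h_\alpha$ with $\alpha$ isotropic, hence $(h_\alpha|h_\alpha) = 0$ and $\fh^i \subseteq (\fh^i)^\perp$; no complement of $\sum_i \fh^i$ in $\fh$ can then be orthogonal to $h_\alpha$, and the commuting choice is obstructed. The main technical hurdle I anticipate is the case-by-case verification in part~\eqref{item.thm:descr.of.G_0.1} that every connected sub-Dynkin diagram of the allowed $\dot\S$'s is itself the Dynkin diagram of an algebra in the stated list; this relies on going through the tables of admissible Dynkin diagrams for the basic classical Lie superalgebras and is routine but tedious.
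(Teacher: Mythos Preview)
Your proposal is correct and follows essentially the same route as the paper: both parts invoke the normal forms from Propositions~\ref{prop:delta.notin.P_0} and~\ref{prop:delta.in.P_0}, decompose $S$ into connected components to obtain the $\g^i$, and handle the commutativity criterion via the (non-)degeneracy of the form restricted to $\fh_S = \sum_i \fh^i$. The paper's proof is terser---it does not spell out the computation of $P_0$ or the sub-Dynkin case check you flag at the end---but the underlying argument is the same.
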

\begin{proof}
For part \eqref{item.thm:descr.of.G_0.1}, Proposition~\ref{prop:delta.notin.P_0} implies that $P=P(\dot{\S}, X)\cup \langle -S\rangle$, where $S\in \dot{\S}$. Let $S=\cup S_i$ be the connected decomposition of $S$, that is $(S_i | S_j)=0$ if and only if $i\neq j$. Define $\frak{g}^i$ to be the subalgebra generated by $\frak{g}_\alpha$, for $\alpha\in \langle \pm S_i\rangle$. For part \eqref{item.thm:descr.of.G_0.2}, notice that, by Proposition~\ref{prop:delta.in.P_0} we can assume that $P_0=\langle\{\pm S\}\cup\{\pm \delta\}\rangle$ for some $S\subseteq \dot{\S}$. Then the superalgebras $\frak{g}^i$ are defined as in part \eqref{item.thm:descr.of.G_0.1}. Now we can complete the linear independent set $\{h_\alpha\mid \alpha\in S\}$ to a basis for $\frak{h}$. If $F$ denotes such a completion, then we define
	\[
\cG^2=\bigoplus_{\substack{k\in \Z \setminus \{0\} \\ h \in F}} \C h\otimes t^k.
	\] 

Notice that  $\frak{g}^i\cong \frak{sl}(1,1)$, is equivalent to  $S_i=\{\alpha\}$, where $\alpha$ is an isotropic root. Since $(S_i | S_j)=0$ for $i\neq j$, it follows from the non-degeneracy of $(\cdot | \cdot)$ that does not exist a set $F$, as above, such that $(S_i | F)=0$. Thus there is no $\cG^2$ such that $[\cG^1, \cG^2]=0$. When $\frak{g}^i\ncong \frak{sl}(1,1)$ for all $i=1,\ldots, m$, then for every root $\alpha$ of $S_i$, there exists a root $\beta$ of $S_i$ such that $(h_\alpha | h_\beta)\neq 0$ (note that $\beta$ can be equal $\alpha$ in various cases). In particular, $(\cdot | \cdot )|_{S}$ is non-degenerate, and hence we can choose $F$ such that $(F | S)=0$. But this implies $[\cG^1,\cG^2]=0$.
\end{proof}

As a consequence of the results above, we have (up to a chain of odd and even reflections) a description of all Borel and parabolic subalgebras of $\cG$.

\begin{theo}\label{thm:parabolic.structure}
Let $P$ be a parabolic set containing the parabolic partition $P(\dot{\S}, X)$, and let $\cP_{P}$ be the corresponding parabolic subalgebra of $\cG$.  Then $\cP_P$ admits a decomposition
	\[
\cP_P=(\cG^0+\cH)\oplus \cU_P,
	\]
where $\cU_P=\bigoplus_{\alpha\in P\setminus P_0} \cG_{\alpha}$ is an ideal in $\cP_P$, and $\cG^0$ in given as in Theorem~\ref{thm:descr.of.G_0}.
\end{theo}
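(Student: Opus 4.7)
The plan is to verify the two assertions of the theorem---the direct sum decomposition and the ideal property of $\cU_P$---by unwinding the definitions and invoking the additive closure of the parabolic set $P$. The internal structure of $\cG^0$ is already furnished by Theorem~\ref{thm:descr.of.G_0}, so no separate argument for that part is needed.

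For the vector space decomposition, note that $\cP_P=(\cH+\cG^0)\oplus\cU_P$ is essentially built into the definition, with $\cG^0=\bigoplus_{\alpha\in P_0}\cG_\alpha$ and $\cU_P=\bigoplus_{\alpha\in P\setminus P_0}\cG_\alpha$. Directness follows at once from the root space decomposition of $\cG$: the Cartan $\cH$ meets no nonzero root space, and distinct root spaces are linearly independent. The disjoint union $P=P_0\sqcup(P\setminus P_0)$ then exhausts the root summands of $\cP_P$.

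The ideal claim reduces to checking $[\cP_P,\cU_P]\subseteq\cU_P$. The bracket with $\cH$ is immediate from the root space decomposition, since $\cH$ preserves weight spaces. For the remaining cases, I would pick any $\alpha\in P$ and $\beta\in P\setminus P_0$; then $[\cG_\alpha,\cG_\beta]\subseteq\cG_{\alpha+\beta}$. The case $\alpha+\beta=0$ would force $\alpha=-\beta\in P$, contradicting $\beta\notin P_0$. If $\alpha+\beta\in\D$, additive closure of $P$ places $\alpha+\beta$ in $P$.

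The heart of the argument is then to show that $\alpha+\beta\notin P_0$, so that $\cG_{\alpha+\beta}\subseteq\cU_P$ as required. I would argue by contradiction: if $-(\alpha+\beta)\in P$, then since $\alpha\in P$ and $-\beta\in\D$ (the root system $\D$ is symmetric under negation, as is evident from the description $\D=\{\alpha+n\delta\mid\alpha\in\dot{\D}\cup\{0\},\,n\in\Z\}\setminus\{0\}$), another application of additive closure yields $-\beta=\alpha+(-(\alpha+\beta))\in P$, contradicting $\beta\in P\setminus P_0$. The main---and fairly mild---obstacle is the bookkeeping in this closure argument; no reflection or parity subtlety intervenes, since the definitions of $P$ and $P_0$ are blind to the $\Z_2$-grading.
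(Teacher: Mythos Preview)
Your argument is correct. The paper itself does not supply a proof for this theorem: it is stated immediately after the sentence ``As a consequence of the results above, we have (up to a chain of odd and even reflections) a description of all Borel and parabolic subalgebras of $\cG$,'' and is treated as essentially following from the definitions together with Theorem~\ref{thm:descr.of.G_0}. Your write-up makes explicit the one nontrivial point the paper leaves tacit, namely that $\cU_P$ is an ideal in $\cP_P$; your closure argument showing $\alpha+\beta\notin P_0$ whenever $\alpha\in P$, $\beta\in P\setminus P_0$, and $\alpha+\beta\in\D$ is exactly the standard verification and is sound.
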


\begin{rem}
Notice that, if the set $S$ in the proof of Theorem~\ref{thm:descr.of.G_0} contains some isotropic root $\alpha$, then the non-degeneracy of $(\cdot | \cdot)$ does not imply the existence of a set $F$ of $\frak{h}$ such that $(h_\alpha | F)=0$, and $F\cup \{h_\alpha | \alpha\in S\}$ is a basis of $\frak{h}$. This kind of behavior does not occur in the non-super setting. The next example illustrates this.
\end{rem}

\begin{example}\label{ex:par.sets.sl(1,2)}
Consider the Lie superalgebra $\frak{g}=\frak{sl}(1,2)$. The roots of $\g$ (using the notation of \cite{FSS00}) are $\pm \alpha = \pm (\delta - \varepsilon_1)$, $\pm \beta = \pm (\varepsilon_1 - \varepsilon_2)$ and $\pm (\alpha + \beta)$. Notice that $\alpha$ is an odd isotropic root (that is, $\alpha(h_\alpha)=(\alpha|\alpha)=0$), and that $\beta$ is an even root. Recall that, up to $W$-conjugation, the only possible choices of system of simple roots $\dot{\S}$ are: $\{\alpha, \beta\}$, $\{-\alpha,\alpha + \beta\}$, and $\{\beta, -\alpha-\beta\}$. Fixing  $\dot{\S}=\{\alpha,\beta\}$, we have that $\dot{\D}^+(\dot{\S})=\{\alpha,\beta,\alpha+\beta\}$. By Proposition~\ref{prop:par.part.}, up to a chain of odd and even reflections, the parabolic partitions of $\cG$ that contain $\dot{\D}^+(\dot{\S})$ are of the form $P(\dot{\S}, X)$, where $X= \emptyset$, $\{\alpha\}$, $\{\beta\}$ or $\dot{\S}$. The most interesting parabolic sets $P$ to consider are those that contain $P_\nat=P(\dot{\S},\emptyset)$. In this case we have two possibilities for $P_0$.
\begin{enumerate}
\item \label{case:sl(2)} If $S=\{\beta\}$, then $P_0=\{\pm \beta + n\delta,\ \pm k \delta\mid n\in \Z,\ k\in \Z_{>0}\}$. Thus
	\[
\cG^1=\frak{sl}(2)^{\widehat{ \ }} := \frak{sl}(2)\otimes \C[t,t^{-1}]\oplus \C K.
	\]
Notice now that $S$ does not contain any isotropic root, and the elements  
	\[
h_\beta =
\left(
\begin{array}{c|cc}
0 & 0 & 0 \\
\hline
0 & 1 & 0 \\
0 & 0 & -1
\end{array}
\right),\quad 
h =
\left(
\begin{array}{c|cc}
2 & 0 & 0 \\
\hline
0 & 1 & 0 \\
0 & 0 & 1
\end{array}
\right)
	\]
provide an orthogonal basis of $\frak{h}$ (the non-degenerate bilinear form here is $(X|Y)={\rm str}(XY)$, where ${\rm str}$ stands for the super-trace of a matrix). Then, defining
	\[
\cG^2=\fH_\alpha := \C K\oplus \bigoplus_{k\in \Z \setminus \{0\}} (\C h \otimes t^k),
	\]
we obtain
	\[
\cG^0=\frak{sl}(2)^{\widehat{ \ }} + \fH_\alpha,\quad \frak{sl}(2)^{\widehat{ \ }} \cap \fH_\alpha=\C K,
	\]
and $[\cG^1, \cG^2]=0$.
\item \label{case:sl(1,1)} If $S=\{\alpha\}$, then $P_0=\{\pm \alpha + n \delta,\ \pm k \delta\mid n\in \Z,\ k\in \Z_{>0}\}$. Thus
	\[
\cG^1=\frak{sl}(1,1)^{\widehat{ \ }}:=\frak{sl}(1,1)\otimes \C[t, t^{-1}]\oplus \C K.
	\]
	

Notice that any element $h\in \frak{h}$ such that $(h_\alpha | h)=0$ is a scalar multiple of $h_\alpha$. Hence, there exists no subalgebra $\cG^2$ of $\cG^0$ such that $\cG^0=\cG^1+\cG^2$ and $[\cG^1, \cG^2]=0$. In this case, we set
	\[
\cG^2=\fH_\beta := \C K\bigoplus_{k\in \Z \setminus \{0\}} (\C h_\beta\otimes t^k),
	\]
and we have that
	\[
\cG^0=\fH_\beta+\frak{sl}(1,1)^{\widehat{ \ }},\quad\text{and}\quad \frak{sl}(1,1)^{\widehat{ \ }} \cap \fH_\beta=\C K.
	\]
\end{enumerate}
Note that $\fH_\beta$ and $\fH_\alpha$ are both infinite-dimensional Heisenberg Lie algebras. Note also that the subalgebra of $\frak{sl}(1,1)^{\widehat{ \ }}$ generated by $h_\alpha\otimes t^{\pm 1}\C[t^{\pm 1}]$ is not a Heisenberg Lie algebra. Instead, it is a commutative algebra.
\end{example}

\section{Induced modules}

Let $P$ be a parabolic set of $\D$, and consider its associated decomposition
	\[
\cG=\cU^-\oplus (\cG^0+\cH)\oplus \cU,\quad\text{where}\quad \cP=(\cG^0+\cH)\oplus \cU
	\]
as in Theorem~\ref{thm:parabolic.structure}.   Any given $(\cG^0+\cH)$-module $N$ can be extended to a $\cP$-module defining $\cU N=0$.  Associated to $\cP$ and $N$, we define the induced module
	\[
M_{\cP}(N)=\Ind^\cG_{\cP} N.
	\]
If $N$ is simple, then $M_{\cP}(N)$ is called a \emph{generalized Verma type module} \cite{Fut97}. In this case $M_\cP(N)$  admits a unique maximal proper submodule, and thus a unique simple quotient, which will be denoted by $L_\cP(N)$. 

When $P$ is a parabolic partition we similarly define a \emph{Verma type module}
	\[
M_{\cB}(\lambda)=\Ind^\cG_{\cB} \C_{\lambda},
	\]
where $\lambda\in \cH^*$,  and $\C_\lambda$ denotes the one-dimensional $\cB$-module spanned by a vector $1_\lambda$ such that $(h+x)1_\lambda=\lambda(h)1_\lambda$, for all $h\in \cH$, $x\in \cN$. 

Our goal  is to study Verma type modules for $\cG$. The theory of standard Verma modules (i.e. those that are associated to parabolic partitions of the form $P(\dot{\S}, \dot{\S})$) is relatively well understood,  so we will focus our attention on non-standard Verma modules. Therefore, from now on we will assume that $X$ is a proper subset of $\dot{\S}$ (notice that $X$ can be empty). We will see that the Verma type module associated to the parabolic partition $P(\dot{\S}, X)$ is closely related to the generalized Verma type module associated to  the parabolic set $P=P(\dot{\S},\emptyset)\cup \langle\{-X\}\cup\{\pm \delta\}\rangle$. Before constructing such modules, let us first give a more explicit description of the Lie subalgebras associated to the parabolic set $P$. 

Let
	\[
\fm_X=\fm^-\oplus \fh\oplus \fm^+,\quad\text{where}\quad \fm^{\pm}=\bigoplus_{\alpha\in \dot{\D}^\pm(X)} \g_\alpha.
	\]
Next, if we consider the nilradical
	\[
\fu_X^\pm=\bigoplus_{\alpha\in \dot{\D}^\pm\setminus \dot{\D}(X)} \g_\alpha,
	\]
then
\begin{equation}\label{eq:defining.subalgebras}
\g = \fu_X^-\oplus \fm_X \oplus \fu_X^+, \quad  \cU_X^\pm = L(\fu_X^\pm), \quad \cG = \cU_X^-\oplus (\cG^0+ \cH) \oplus \cU_X^+.
\end{equation}

In order to describe the algebras $\cG^1$ and $\cG^2$ that appear in the decomposition of the Levi component $\cG^0$, we set $\fh_X :=\bigoplus_{\alpha\in \dot{\D}(X)}[\g_\alpha,\g_{-\alpha}]$ and we let $\fh_c\subseteq \fh$ be such that $\fh = \fh_X\oplus \fh_c$. Define
	\[
\fk_X = \fm^-\oplus \fh_X\oplus\fm^+\subseteq \fm_X.
	\]
Then
	\[
\cG^1=L(\fk_X)\oplus \C K,
	\]
and 
	\[
\cG^2 := \fH_X= (\fh_c\otimes t\C[t])\oplus \C K\oplus (\fh_c\otimes t^{-1}\C[t^{-1}])
	\] 
is a Heisenberg Lie algebra. Further, consider the decomposition
	\[
\fH_X=\fH_X^-\oplus \C K\oplus \fH_X^+,\quad \text{where}\quad \fH_X^\pm:= (\fh_c\otimes t^{\pm 1}\C[t^\pm]).
	\]	

Set
	\[
\cM_X=L(\fm_X) + \cH.
	\]
Then 
	\[
\cM_X=\cM_X^-\oplus \cH\oplus \cM_X^+,\quad \text{where}\quad \cM_X^\pm=(\fm_X\otimes t^\pm \C[t^\pm])\oplus \fm^\pm.
	\]
In other words, we have that
	\[
\cM_X=\cG^0+\cH=\cG^0\oplus \C d\oplus \fh_X^\perp,
	\] 
and $\cM_X^+$ is nothing but the standard Borel subalgebra of $\cM_X$. Moreover, we have 
	\[
\cG=\cU_X^-\oplus \cM_X \oplus \cU_X^+\quad \text{and}\quad  \cP_P^\pm=\cM_X\oplus \cU_X^\pm.
	\]

Finally, define
	\[
\cK_X=\cG^1+\cH= (L(\fk_X)\oplus \C K\oplus \C d)\oplus \fh_X^\perp,
	\]
and consider the decomposition
	\[
\cK_X=\cK_X^-\oplus \cH\oplus \cK_X^+,\quad \text{where}\quad  \cK_X^\pm=(\fk_X\otimes t^\pm\C[t^\pm])\oplus \fm^\pm.
	\]
Observe that only difference between $\cM_X$ and $\cK_X$ is that $\fH\subseteq \cM_X$ while $\fH \nsubseteq \cK_X$. Precisely, $L(\fh_X) \subseteq \cK_X$, but $(\fh_c\otimes t^\pm\C[t^\pm])\nsubseteq \cK_X$, and
	\[
\cM_X=\fH_X^-\oplus \cK_X\oplus \fH_X^+.
	\]
The subalgebra $\cM_X$ is naturally related to the Borel  subalgebra induced by the parabolic partition  $P(\dot{\S}, X)$. Namely, 
	\[
\cN_{P(\dot{\S}, X)}^\pm=\bigoplus_{\alpha\in \pm P(\dot{\S}, X)}\cG_\alpha, \quad \cB_{P(\dot{\S}, X)}^\pm=\cH \oplus \cN_{P(\dot{\S}, X)}^\pm\quad\text{and }\quad \cN_{P(\dot{\S}, X)}^\pm =\cM_X^\pm\oplus \cU_X^\pm. 
	\]
Moreover,
	\[
[\cB_{P(\dot{\S}, X)}^\pm, \cU_X^\pm]\subseteq \cU_X^\pm,\quad \text{and}\quad [\cM_X, \cU_X^\pm]\subseteq \cU_X^\pm.
	\]

From now on, we fix $X\subsetneqq \dot{\S}$ (and hence the associated parabolic partition $P(\dot{\S}, X)$ and the associated parabolic set $P=P(\dot{\S},\emptyset)\cup \langle\{-X\}\cup\{\pm \delta\}\rangle$) and we will drop $X$ and $P$ from the above notation (e.g. $\cP_P$, $\cK_X$,  etc., will be shorted by $\cP$, $\cK$, etc.). In the next sections that follow we will see that induced modules associated to these two sets are closely related. 

\subsection{Generalities on Verma type modules}\label{subsec:Verma.type.mod}
Let $\lambda\in \cH^*$,  and let
	\[
M_\cB(\lambda)={\rm Ind}^{\cG}_{\cB}(\C_\lambda)
	\]
be its corresponding Verma type module.  Set
	\[
M_\cM(\lambda)=\Ind_{\cM^+\oplus \cH}^\cM \C_\lambda,\quad \text{and}\quad M_\cK(\lambda)=\Ind_{\cK^+\oplus \cH}^\cK \C_\lambda.
	\]
It is standard to see that the modules $M_\cB(\lambda)$, $M_\cM(\lambda)$ and $M_\cK(\lambda)$ admit a unique simple quotient. Let $L_\cB(\lambda)$, $L_\cM(\lambda)$ and $L_\cK(\lambda)$ denote such simple quotients. 

If $F$ is a nonzero $\cM$-module, then we may consider $F$ as a $\cP$-module with trivial action of $\cU^+$. Then we define
	\[
M_\cP(F)=\Ind^{\cG}_{\cM} (F).
	\]
If $F$ is simple, the module $M_\cP(F)$ is nothing but the generalized Verma type module associated to $\cP$ and $F$. Our main goal is to find necessary and sufficient conditions for irreducibility of $M_\cP(F)$, for the case $F$ is a simple subquotient of $M_{\cM}(\lambda)$. As a corollary we give necessary and sufficient conditions for irreducibility of $M_\cB(\lambda)$.

In what follows, for a subalgebra $\cA$ of $\cG$, we define 
	\[
\D(\cA)=\{\alpha\in \D\mid \cG_\alpha\subseteq \cA\},
	\]
and we let
	\[
Q(\cA)=\{\sum_{i=1}^n k_i\alpha_i\mid k_i\in \Z_{\geq 0},\ \alpha_i\in \D(\cA)\}
	\] 
be the monoid in $\cH^*$ generated by $\D(\cA)$.

\begin{prop}\label{prop:structure.MB(lambda)}
The following standard conditions hold.
\begin{enumerate}
\item \label{prop.basic.prop.verm.mod.1} $\dim M_\cB(\lambda)_\lambda = 1$.
\item \label{prop.basic.prop.verm.mod.2} $M_\cB(\lambda)$ is a free $\bU(\cN^-)$-module generated by the vector $1 \otimes 1_\lambda$.
\item \label{prop.basic.prop.verm.mod.3} If $M_\cB(\lambda)_\mu\neq 0$, then $\dim M_\cB(\lambda)_\mu < \infty$ if and only if $\mu\in \{\lambda - \gamma \mid \gamma\in Q(\cM^+),\ n\in \Z_{> 0}\}$.
\end{enumerate}
\end{prop}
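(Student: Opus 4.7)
The plan is as follows. Parts (a) and (b) are standard consequences of the PBW theorem applied to the triangular decomposition $\cG = \cN^- \oplus \cB$: we obtain $M_\cB(\lambda) \cong \bU(\cN^-) \otimes_{\C} \C_\lambda$ as a free left $\bU(\cN^-)$-module generated by $1 \otimes 1_\lambda$, which is exactly (b), and then the only PBW monomial contributing to the weight $\lambda$ is the scalar $1$, giving (a).

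For (c), I would refine the PBW decomposition using $\cN^- = \cM^- \oplus \cU^-$ together with the fact that $\cU^-$ is an ideal of $\cN^-$. The latter reduces to a root-sum computation: the sum of any root of $\cM^-$ with any root of $\cU^-$ still has nonzero $\dot{\S}\setminus X$-support, coming from the $\cU^-$ summand. This yields $\bU(\cN^-) \cong \bU(\cU^-) \otimes_{\C} \bU(\cM^-)$ as vector spaces, and for $\mu = \lambda - \gamma$ one obtains
\[
\dim M_\cB(\lambda)_\mu = \sum_{\substack{\gamma_U \in Q(\cU^+),\ \gamma_M \in Q(\cM^+) \\ \gamma_U + \gamma_M = \gamma}} \dim \bU(\cU^-)_{-\gamma_U}\cdot \dim \bU(\cM^-)_{-\gamma_M}.
\]

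The structural point I would establish first is that every root of $\cU^\pm$, when expanded in the basis $\dot{\S}$, has nonzero component along some simple root of $\dot{\S}\setminus X$ (since the roots of $\cU^+$ are of the form $\alpha + n\delta$ with $\alpha \in \dot{\D}^+ \setminus \dot{\D}^+(X)$), whereas every root of $\cM^\pm$ has its finite part supported on $X$. There are two consequences. First, if $\gamma \in Q(\cM^+)$ then its $\dot{\S}\setminus X$-component vanishes, which forces $\gamma_U = 0$ in any valid decomposition (the only nonnegative integer combination of roots in $\dot{\D}^+ \setminus \dot{\D}^+(X)$ whose finite part has no $\dot{\S}\setminus X$-support is the empty one). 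Thus $M_\cB(\lambda)_\mu \cong \bU(\cM^-)_{-\gamma}$, which is finite-dimensional: every root of $\cM^-$ has non-positive $\delta$-component, which bounds the number of ``loop'' factors, and the remaining contributions from $\fm^-$ are then bounded since $\fm^-$ is finite-dimensional and its roots lie in $\dot{\D}^-(X)$ (precluding cancellation).

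Conversely, for $\gamma \notin Q(\cM^+)$ I would exhibit infinitely many independent PBW monomials of weight $-\gamma$ via a $\delta$-shift argument: fix any monomial containing a $\cU^-$-factor of weight $-\alpha + s\delta$ (such a factor must exist since $\gamma_U \neq 0$ in every decomposition) and, for each $k \geq 1$, replace that factor by one of weight $-\alpha + (s+k)\delta$ (still in $\cU^-$) together with an extra factor drawn from $\fh_c \otimes t^{-k} \subseteq \cM^-$. The hypothesis $X \subsetneq \dot{\S}$ ensures $\fh_c \neq 0$, so these extra factors exist and the total weight is preserved. The main technical obstacle is verifying that the resulting family is linearly independent in the PBW basis of $\bU(\cU^-) \otimes \bU(\cM^-)$; this reduces to the standard ordering of PBW basis elements once the bookkeeping of $\delta$-shifts is set up carefully.
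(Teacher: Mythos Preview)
Your proposal is correct and follows the same approach as the paper: PBW together with the decomposition $\cN^\pm = \cM^\pm \oplus \cU^\pm$, the fact that $\cM^+$ is the standard positive part of $\cM$ (hence finite weight multiplicities there), and the description $\D(\cU^+) = \{\alpha+n\delta \mid \alpha \in \dot{\D}^+(\dot{\S})\setminus\dot{\D}^+(X),\ n\in\Z\}$ (which is exactly what enables your $\delta$-shift). The paper's proof is a single sentence invoking precisely these three ingredients, so your write-up is a faithful expansion; the only unnecessary wrinkle is restricting the extra factor to $\fh_c$ in the $\delta$-shift, since $\fh\otimes t^{-1}\C[t^{-1}]\subseteq\cM^-$ already and any nonzero $h\in\fh$ would do.
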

\begin{proof}
It follows from PBW Theorem, along with the fact that $\cN^+ = \cM^+\oplus \cU^+$, $\cM^+$ lies in a standard Borel subalgebra of $\cM$, and 
	\[
\D(\cU^+) = \{\alpha+n\delta\mid \alpha\in \dot{\D}^+(\dot{\S})\setminus \dot{\D}^+(X),\ n\in \Z\}.
	\]
\end{proof}

The analog of Proposition~\ref{prop:structure.MB(lambda)} for the modules $M_\cM(\lambda)$ and $M_\cK(\lambda)$ for part \eqref{prop.basic.prop.verm.mod.1} and \eqref{prop.basic.prop.verm.mod.2} is obvious. However, notice that part \eqref{prop.basic.prop.verm.mod.3} changes. Namely, the weight spaces of $M_\cM(\lambda)$ and $M_\cK(\lambda)$ will always be finite-dimensional.

Recall the Heisenberg Lie algebra associated to $\fh_c$:
	\[
\fH_X=\C K\oplus (\fh_c\otimes t\C[t])\oplus (\fh_c\otimes t^{-1}\C[t^{-1}]),
	\]
with decomposition
	\[
\fH_X=\fH_X^-\oplus \C K\oplus \fH_X^+,\quad\text{where}\quad \fH_X^\pm:= (\fh_c\otimes t^\pm\C[t^\pm]).
	\]	
For $\xi \in (\C K)^*$, let $M_{\fH_X}(\xi)$ denote the Verma  $\fH_X$-module associated to $\xi$, that is,
	\[
M_{\fH_X}(\xi)=\Ind^{\fH_X}_{\C K \oplus \fH_X^+} \C_\xi,
	\]
where $\C_\xi$ is spanned by $1_\xi$, $K 1_{\xi}=\xi(K) 1_{\lambda}$ and $x 1_{\xi}=0$ for all $x\in  \fH_X^+$.
	
The following is standard.
	
\begin{lem}\label{lem:irre.of.Heisenberg.modules}
The module $M_{\fH_X}(\xi)$ is simple if and only if $\xi(K)\neq 0$.
\end{lem}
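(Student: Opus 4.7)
The plan is to recognize $\fH_X$ as the standard infinite-dimensional Heisenberg Lie algebra attached to the symplectic pairing induced by $(\cdot|\cdot)$ on $\fh_c$, with $M_{\fH_X}(\xi)$ its Fock representation at level $\xi(K)$. Fix a basis $\{h_1,\ldots,h_r\}$ of $\fh_c$ and a dual basis $\{h_1^*,\ldots,h_r^*\}$ with respect to $(\cdot|\cdot)|_{\fh_c}$, so that
\[
[h_i(m),h_j^*(n)]=m\,\delta_{m,-n}\,\delta_{ij}\,K
\]
on $M_{\fH_X}(\xi)$. By PBW applied to $\fH_X=\fH_X^-\oplus \C K\oplus \fH_X^+$, the module $M_{\fH_X}(\xi)$ has a basis of ordered monomials in the operators $h_i^*(-n)$, $n>0$, applied to $1_\xi$, and may be identified with the polynomial algebra in these generators acting on itself by multiplication.

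For the necessity direction ($\xi(K)=0\Rightarrow$ not simple), I would observe that the bracket $[\fH_X^+,\fH_X^-]\subseteq \C K$ acts as zero on $M_{\fH_X}(\xi)$, so $\fH_X^+$ and $\fH_X^-$ effectively commute on the module. Combined with $\fH_X^+\cdot 1_\xi=0$, this forces every element of $\fH_X^+$ to annihilate all of $M_{\fH_X}(\xi)$. The cyclic submodule $U(\fH_X)\cdot(h_1(-1)\cdot 1_\xi)=U(\fH_X^-)\cdot(h_1(-1)\cdot 1_\xi)$ is then proper and nonzero (it misses $1_\xi$), so $M_{\fH_X}(\xi)$ is reducible.

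For the sufficiency direction ($\xi(K)\neq 0\Rightarrow$ simple), I would take a nonzero submodule $N$ and show $1_\xi\in N$ by induction on weight. Assign to each PBW monomial $h_{i_1}^*(-n_1)\cdots h_{i_k}^*(-n_k)\cdot 1_\xi$ the weight $\sum n_j\in\Z_{\geq 0}$ (equivalently, minus the $d$-eigenvalue shift from $1_\xi$), and given $0\neq v\in N$ let $d\geq 0$ be the maximum weight appearing in $v$. If $d=0$ then $v\in\C\cdot 1_\xi$ and we are done; if $d>0$, pick an index $(i_0,n_0)$ with $n_0>0$ such that some top-weight monomial of $v$ contains a factor $h_{i_0}^*(-n_0)$. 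The rescaled operator $\tfrac{1}{n_0\xi(K)}h_{i_0}(n_0)$ acts on the polynomial description of $M_{\fH_X}(\xi)$ as the derivation $\partial/\partial h_{i_0}^*(-n_0)$; applied to $v$, it yields a nonzero element of $N$ whose top-weight part has weight $d-n_0<d$, and induction delivers $1_\xi\in N$. The main technical point is verifying this derivation identity, which amounts to using the Heisenberg relation $[h_{i_0}(n_0),h_j^*(-m)]=n_0\,\delta_{n_0,m}\,\delta_{i_0,j}\,\xi(K)$ to commute $h_{i_0}(n_0)$ past each factor of an arbitrary PBW monomial, together with the vanishing $h_{i_0}(n_0)\cdot 1_\xi=0$; this is the standard Fock-space calculation and is where the hypothesis $\xi(K)\neq 0$ is essential, since otherwise the scaling by $\tfrac{1}{n_0\xi(K)}$ fails and the derivation is identically zero.
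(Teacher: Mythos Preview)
Your proof is correct. The paper itself does not supply a proof of this lemma; it simply prefaces the statement with ``The following is standard,'' treating it as the classical Fock-space simplicity criterion for Heisenberg algebras. Your argument is exactly the standard one the paper is alluding to: identify $M_{\fH_X}(\xi)$ with a polynomial algebra on which the positive modes act as (scaled) partial derivatives, and use a weight-lowering induction to reach the vacuum vector from any nonzero element when $\xi(K)\neq 0$; for $\xi(K)=0$, the positive modes act trivially and any nonconstant monomial generates a proper submodule.

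One small remark: the existence of the dual basis $\{h_i^*\}$ relies on $(\cdot|\cdot)|_{\fh_c}$ being non-degenerate. This is implicit in the paper's assertion that $\fH_X$ is a Heisenberg Lie algebra (otherwise the center would be larger than $\C K$ and the lemma would fail as stated), and it is ensured by the choice of $\fh_c$ made in Section~6.1. You might add a one-line acknowledgement of this, but it is not a gap.
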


\section{Non-isotropic case} 
\subsection{Criterion of simplicity}\label{sec:main.sec}
Using the notation from Theorem~\ref{thm:descr.of.G_0}, in this section we assume that no component $\g^i$ of $\cG^1$ is isomorphic to $\frak{sl}(1,1)$, that is:
\begin{equation*}
X \text{ does not have a connected component consisting of only one isotropic odd root. }
\end{equation*} 
Then the non-degeneracy of $(\cdot |\cdot )$ implies that we can choose the subalgebra $\fh_c$ such that $\fh_c = \{h\in \fh\mid 0=(h|h_\alpha)=\alpha(h), \text{ for all } \alpha\in \dot{\D}(X)\}$. (Note that if $X$ consists of only one isotropic root, then $\fh_X\subseteq \fh_c$, and $\fh_c\neq \fh$, since $(\cdot |\cdot)$ is non-degenerate). Then in this case we have that
	\[
[\fH_X, \cG^1]=0\Rightarrow [\fH_X, \cK_X^\pm]=0
	\]
and hence
\begin{equation}\label{eq:Enveloping.of.K}
\bU(\cM_X)=\bU(\fH_X^-)\bU(\cK_X)\bU(\fH_X^+).
\end{equation}
	

\begin{rem}
Consider $M_\cK(\lambda)\subseteq M_\cM(\lambda)$ in the obvious way. Since $[\fH_X^+, \cK^-]=0$ and $\fH_X^+ \cdot 1_\lambda=0$, we have 
	\[
\fH_X^+\cdot M_\cK(\lambda) = \fH_X^+\cdot \bU(\cK^-)\otimes_{\bU(\cM^+\oplus \cH)} \C_\lambda=\bU(\cK^-)\otimes_{\bU(\cM^+\oplus \cH)} \fH_X^+\cdot \C_\lambda=0.
	\] 
In particular, $\fH_X^+$ acts trivially on $M_\cK(\lambda)$ (and hence on any subquotient of it).
\end{rem}

\begin{prop}\label{prop:inducing.from.K.to.M}
Suppose $\lambda(K)\neq 0$. If a $\cK$-module $F$ is a simple subquotient of $M_{\cK}(\lambda)$, and we let $\fH_X^+\cdot F=0$, then the $\cM$-module $\Ind^{\cM}_{\cK\oplus \fH_X^+} F$ is a simple subquotient of $M_{\cM}(\lambda)$.
\end{prop}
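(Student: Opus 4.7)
The plan is to first recognize $M_\cM(\lambda)$ as an induction from $\cK\oplus \fH_X^+$, so that the induced module $V := \Ind^{\cM}_{\cK\oplus \fH_X^+} F$ automatically becomes a subquotient of $M_\cM(\lambda)$; and then to prove simplicity of $V$ by exploiting the Fock-space nature of the $\fH_X$-action, which is rigidified by the hypothesis $\lambda(K)\neq 0$.

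For the identification step, I would observe that $\cM^+ = \cK^+\oplus \fH_X^+$ (coming from $\fm_X=\fk_X\oplus \fh_c$), hence $\fH_X^+\subseteq\cM^+$ annihilates the highest-weight line $\C_\lambda$ and the inclusion $\C_\lambda \hookrightarrow M_\cK(\lambda)$ is a map of $(\cM^++\cH)$-modules. Induction in stages, together with the vector-space decomposition $\cM = \cK\oplus \fH_X^+\oplus \fH_X^-$ and PBW, then yields the canonical isomorphism
    \[
M_\cM(\lambda)\cong \Ind^{\cM}_{\cK\oplus \fH_X^+} M_\cK(\lambda).
    \]
Since this induction is exact (again by PBW freeness), the short exact sequences of $\cK\oplus \fH_X^+$-modules realizing $F$ as a subquotient of $M_\cK(\lambda)$ pass to short exact sequences realizing $V$ as a subquotient of $M_\cM(\lambda)$.

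To prove simplicity, I would identify $V\cong \bU(\fH_X^-)\otimes_{\C} F$ as a vector space via PBW. Using $[\cK,\fH_X]=0$ together with $\fH_X^+\cdot F=0$, for each $f$ in a basis of $F$ the subspace $\bU(\fH_X^-)\otimes \C f$ is a $\fH_X$-submodule of $V$ isomorphic to the Heisenberg Verma module $M_{\fH_X}(\xi)$ with $\xi(K) = \lambda(K)\neq 0$, and each such submodule is simple by Lemma~\ref{lem:irre.of.Heisenberg.modules}. A direct computation using the pairing $[\fH_X^+,\fH_X^-]\to \C K$ and the scalar action of $K$ by $\lambda(K)$ shows that $\fH_X^+$ acts on $\bU(\fH_X^-)\otimes F$ as scaled partial differential operators with respect to a PBW basis of $\bU(\fH_X^-)$; in particular, for any nonzero $w\in V$ a suitable ``top-order partial'' in $\bU(\fH_X^+)$ sends $w$ to a nonzero element of $F$ by annihilating everything except the maximal-PBW-degree term. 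Consequently, any nonzero $\cM$-submodule $W\subseteq V$ satisfies $W\cap F\neq 0$; since $F$ is simple as a $\cK$-module, $W\supseteq F$, and hence $W\supseteq \bU(\fH_X^-)\cdot F = V$, forcing $W=V$.

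The main obstacle is the Fock-space computation showing that $\bU(\fH_X^+)\cdot w$ meets $F$ nontrivially for every nonzero $w$: this is where the hypothesis $\lambda(K)\neq 0$ is indispensable (via Lemma~\ref{lem:irre.of.Heisenberg.modules} and nondegeneracy of the Heisenberg pairing on $\fh_c$), and it is also where the standing non-isotropic assumption on $X$ is used, since that assumption is precisely what makes $[\fH_X,\cK]=0$ and hence the clean decomposition of $V$ as a direct sum of simple Heisenberg modules possible.
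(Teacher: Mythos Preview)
Your proposal is correct and close in spirit to the paper's argument, but the simplicity step is organized in the reverse order. The paper takes an arbitrary nonzero $u=\sum a_{i,j}u_i\otimes v_j$, picks the \emph{minimal weight} $v_m$ occurring, and uses the simplicity of $F\cong L_\cK(\mu)$ to find $w\in\bU(\cK^+)_\eta$ raising that weight component to the highest-weight vector $v_\mu$; because $[\cK^+,\fH_X^-]=0$, applying $w$ lands in $\bU(\fH_X^-)\otimes \C v_\mu$, a single Fock module, and then Lemma~\ref{lem:irre.of.Heisenberg.modules} finishes. You instead use $\fH_X^+$ first, acting as constant-coefficient differential operators on $\bU(\fH_X^-)\otimes F$, to project any nonzero $w$ into $1\otimes F$, and then invoke the $\cK$-simplicity of $F$. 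Both routes rest on the same two ingredients (the commutation $[\cK,\fH_X]=0$ in the non-isotropic case, and Heisenberg irreducibility for $\lambda(K)\neq 0$), and your version has the mild advantage that it does not require identifying $F$ with a specific $L_\cK(\mu)$ or singling out a minimal-weight vector. The paper's version, on the other hand, packages the Fock-space computation entirely into the black box of Lemma~\ref{lem:irre.of.Heisenberg.modules}, whereas your ``top-order partial'' step implicitly re-proves part of that lemma (and uses the nondegeneracy of $(\cdot|\cdot)$ on $\fh_c$, which is exactly what the non-isotropic hypothesis guarantees). Your subquotient step via transitivity and exactness of induction is essentially identical to the paper's.
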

\begin{proof}
First notice that, if we let $\fH_X^+$ act trivially on any $\cK$-module $V$, then, it follows from \eqref{eq:Enveloping.of.K} that
\begin{align*}
\bU(\cM)\otimes_{\bU(\cK\oplus \fH_X^+)} V & = \bU(\fH_X^-)\bU(\cK)\bU(\fH_X^+)\otimes_{\bU(\cK\oplus \fH_X^+)}V \\
& = \bU(\fH_X^-) \otimes_{\bU(\cK\oplus\fH_X^+)}V 
\end{align*}
as $\cM$-modules. Thus $\bU(\cM)\otimes_{\bU(\cK\oplus \fH_X^+)} V$ is a free $\bU(\fH_X^-)$-module. Let now $N\subseteq M\subseteq M_\cK(\lambda)$ be $\cK$-submodules such that $M/N\cong F$. Consider the two $\cM$-submodules of $M_\cM(\lambda)$ given by
	\[
{\bar M}:=\bU(\cM) \otimes_{\bU(\cK \oplus \fH_X^+)} M,\quad {\bar N}:=\bU(\cM) \otimes_{\bU(\cK \oplus \fH_X^+)} N.
	\]
It is easy to see that 
	\[
{\bar M}/{\bar N}\cong \bU(\cM) \otimes_{\bU(\cK \oplus \fH_X^+)} (M/N)\cong \bU(\cM) \otimes_{\bU(\cK \oplus \fH_X^+)} F
	\] 
as $\cM$-modules. 

It remains to prove that $\bU(\cM) \otimes_{\bU(\cK \oplus \fH_X^+)} F$ is a simple $\cM$-module. Since $F$ is a simple subquotient of $M_\cK(\lambda)$, there is $\mu\in \lambda + Q(\cM)$ such that $F\cong L_\cK(\mu)$. Let $u\in \bU(\cM) \otimes_{\bU(\cK \oplus \fH_X^+)} L_\cK(\mu)$. Then
	\[
u=\sum_{i,j} a_{i,j}u_i\otimes_{\bU(\cK \oplus \fH_X^+)} v_j,
	\]
where $\{u_i\}_{i\in I}$ is a basis for $\bU(\fH_X^-)$, $\{v_j\}_{j\in J}$ is a basis for $L_\cK(\mu)$, and $a_{i,j}\in \C$. Let $v_m$ be the minimal weight vector for which $a_{i,m}\neq 0$, for some $i\in I$, and fix $\ell\in I$ such that $a_{\ell,m}\neq 0$. Notice that $v_m\in L_\cK(\mu)_{\mu-\eta}$, for some $ \eta\in Q(\cM^+)$.  Since $a_{\ell,m}\neq 0$, and $\{v_j\}$ is linearly independent, we have that $0\neq \sum_{j\in J'} a_{\ell,j}v_j$, where $J'$ is the set of indexes for which $v_j\in L_\cK(\mu)_{\mu-\eta}$. Since $L_\cK(\mu)$ a simple $\cK$-module, there exists $w\in \bU(\cK)_\eta\subseteq \bU(\cK^+)$, such that $w(\sum_{j\in J'} a_{\ell,j}v_j) :=v_\mu$ is a highest weight vector of $L_\cK(\mu)$. Moreover, due to the choice of $v_m$, we have that $w v_j=0$, for all $j\notin J'$, and $w v_j \in \C v_\mu$ for all $j\in J'$. Hence, since $[\cK^+, \fH_X^-]=0$, we obtain
\begin{align*}
wu & = w\left(\sum_{i,j} a_{i,j}u_i\otimes_{\bU(\cK \oplus \fH_X^+)} v_j\right) \\
& = u_\ell \otimes_{\bU(\cK \oplus \fH_X^+)}  w \left(\sum_{j} a_{\ell,j}v_j \right)   + \left(\sum_{i\neq \ell}u_i\right)\otimes_{\bU(\cK \oplus \fH_X^+)}  \left(\sum_{j} a_{i,j} w v_j\right) \\
& = u_\ell\otimes_{\bU(\cK \oplus \fH_X^+)} v_\mu + \sum_{i\neq \ell} \beta_i u_i\otimes_{\bU(\cK \oplus \fH_X^+)} v_\mu\neq 0
\end{align*}
is an element in $\bU(\fH_X^-)\otimes_{\bU(\cK \oplus \fH_X^+)} v_\mu$. But, by Lemma~\ref{lem:irre.of.Heisenberg.modules}, the latter is a simple $\bU(\fH_X)$-module if and only if $\mu(K)\neq 0$, which is the case as $\mu\in \lambda+Q(\cM)$, $\lambda(K)\neq 0$, and $\gamma(K)=0$ for all $\gamma\in Q(\cM)$. Therefore, $v_\mu\in \bU(\fH_X)wu\subseteq \bU(\cM)u$, and the result follows.
\end{proof}

\begin{cor}\label{cor:inducing.from.K.to.M}
Suppose $\lambda(K)\neq 0$. Then any simple subquotient of $M_\cM(\lambda)$ is of the form $\Ind^{\cM}_{\cK\oplus \fH_X^+} L_\cK(\mu)$ for some $\mu\in \lambda+ Q(\cM)$.
\end{cor}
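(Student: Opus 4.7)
The plan is, given a simple subquotient $V$ of $M_\cM(\lambda)$, to locate a highest weight vector $v \in V_\mu$, identify the $\cK$-submodule $W := \bU(\cK)v$ with $L_\cK(\mu)$, and then verify that the natural $\cM$-homomorphism $\Ind^\cM_{\cK \oplus \fH_X^+} L_\cK(\mu) \to V$ sending $1 \otimes v \mapsto v$ is an isomorphism. The non-isotropic assumption is used throughout via $[\fH_X, \cG^1] = 0$ and, crucially, $[\fH_X^+, \cK] \subseteq \fH_X^+$, which is what underlies \eqref{eq:Enveloping.of.K}.

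First I would produce the highest weight vector. Since $V$ is a subquotient of $M_\cM(\lambda)$, its weights lie in $\lambda - Q(\cM^+)$, and its weight spaces are finite-dimensional by the analogue of Proposition~\ref{prop:structure.MB(lambda)} for $M_\cM(\lambda)$. A standard grading argument using the $d$-eigenvalue shows that above any weight of $V$ there are only finitely many weights of $V$, so $V$ admits a maximal weight $\mu$. For any $0 \neq v \in V_\mu$, the action of $\cM^+ = \cK^+ \oplus \fH_X^+$ strictly raises weights and hence $\cM^+ v = 0$. In particular, $\mu \in \lambda - Q(\cM^+) \subseteq \lambda + Q(\cM)$, and since $K$ is central and acts on all of $M_\cM(\lambda)$ by $\lambda(K)$, we have $\mu(K) = \lambda(K) \neq 0$.

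The main step is the identification $W \cong L_\cK(\mu)$. The subspace $V' = \{w \in V : \fH_X^+ w = 0\}$ is a $\cK$-submodule of $V$, because $[\fH_X^+, \cK] \subseteq \fH_X^+$ implies that for $h \in \fH_X^+$ and $k \in \cK$ acting on $w \in V'$ one has $h(kw) = k(hw) + [h,k]w = 0$. Thus $v \in V'$ forces $W \subseteq V'$, so $\fH_X^+$ annihilates $W$. Now $W$ is a highest weight $\cK$-module with one-dimensional top space $\C v$, so it has a unique maximal proper $\cK$-submodule $R$, all of whose weights are strictly below $\mu$. Using $\bU(\cM) = \bU(\fH_X^-)\bU(\cK)\bU(\fH_X^+)$ together with $\fH_X^+ \cdot R = 0$ and $\bU(\cK) \cdot R = R$, we obtain $\bU(\cM) R = \bU(\fH_X^-) R$. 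Since $\bU(\fH_X^-)$ only subtracts non-negative multiples of $\delta$, the weight $\mu$ does not appear in $\bU(\cM) R$, so $v \notin \bU(\cM) R$. Hence $\bU(\cM) R$ is a proper submodule of the simple module $V$, which forces $\bU(\cM) R = 0$, and then $R = 0$. I expect this to be the main obstacle: a priori $W$ could be any quotient of $M_\cK(\mu)$, and it is the propagation of the $\fH_X^+$-annihilation from $v$ to all of $W$ (available precisely because $[\fH_X, \cG^1] = 0$) combined with the simplicity of the ambient $\cM$-module that collapses the possibilities.

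Finally, the natural $\cM$-homomorphism $\phi : \Ind^\cM_{\cK \oplus \fH_X^+} L_\cK(\mu) \to V$ defined by $1 \otimes v \mapsto v$ is well defined because $\fH_X^+$ acts trivially on both sides, and it is surjective since its image is nonzero and $V$ is simple. As $\mu(K) = \lambda(K) \neq 0$, the Heisenberg-action argument appearing in the proof of Proposition~\ref{prop:inducing.from.K.to.M}, which only needs the nonvanishing of the central charge and the bracket relations $[\fH_X, \cK^\pm] = 0$, applies verbatim to show that the domain of $\phi$ is a simple $\cM$-module. Therefore $\phi$ is an isomorphism, and we conclude $V \cong \Ind^\cM_{\cK \oplus \fH_X^+} L_\cK(\mu)$ with $\mu \in \lambda + Q(\cM)$.
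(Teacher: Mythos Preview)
Your argument is correct, but the paper's route is considerably shorter. The paper observes that any simple subquotient of $M_\cM(\lambda)$ is necessarily of the form $L_\cM(\mu)$ for some $\mu \in \lambda + Q(\cM)$, and then notes that the proof of Proposition~\ref{prop:inducing.from.K.to.M} already shows $\Ind^\cM_{\cK\oplus\fH_X^+} L_\cK(\mu)$ is a simple $\cM$-module with highest weight $\mu$; uniqueness of simple highest weight modules then gives the isomorphism immediately. In contrast, you first explicitly identify $\bU(\cK)v$ with $L_\cK(\mu)$ inside $V$ by propagating the $\fH_X^+$-annihilation and using simplicity of $V$ to force the maximal $\cK$-submodule $R$ to vanish, and only afterwards build the map from the induced module. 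Your approach buys an explicit realization of $L_\cK(\mu)$ sitting inside the simple subquotient, which is conceptually nice, but the paper's approach bypasses this entirely by appealing to the classification $V \cong L_\cM(\mu)$ from the outset.
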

\begin{proof}
Any simple subquotient of $M_\cM(\lambda)$ is of the form $L_\cM(\mu)$, for some $\mu\in \lambda+Q(\cM)$. By the proof of Proposition~\ref{prop:inducing.from.K.to.M}, $\Ind^\cM_{\cK\oplus \fH_X^+}L_\cK(\mu)$ is simple. Then $L_\cM(\mu)\cong \Ind^\cM_{\cK\oplus \fH_X^+}L_\cK(\mu)$.
\end{proof}

Recall that
	\[
\fu^\pm:= \fu_X^\pm=\bigoplus_{\alpha\in \dot{\D}^\pm\setminus \dot{\D}(X)} \g_\alpha.
	\]

\begin{lem}\label{lem:tec.1}
If $\alpha\in \D(\fu^-)$ is a nonsimple root and $y\in \g_{-\alpha}$ is nonzero, then there exist $z\in \g_\gamma$ with $\gamma\in \D(\fn^+)$, such that $0\neq [z,y]\in \fu^-$.
\end{lem}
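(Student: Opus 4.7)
The plan is to produce $\gamma$ as one of two pieces of a Chevalley-type decomposition of the non-simple positive root $-\alpha$. Since $\alpha\in \dot{\D}(\fu^-)$ is non-simple, $-\alpha\in \dot{\D}^+\setminus \dot{\D}(X)$ is a non-simple positive root of $\g$, and I would invoke the standard fact that, for the contragredient Lie superalgebras considered in this paper, any non-simple positive root can be written as the sum of a positive simple root and a positive root: $-\alpha=\beta+\tau$ with $\beta\in \dot{\S}$ and $\tau\in \dot{\D}^+$. Equivalently, $\alpha+\tau=-\beta$ and $\alpha+\beta=-\tau$ are both negative roots of $\g$.

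Next I would split into cases by whether $\beta\in X$, the key combinatorial observation being that the subcase ``$\beta\in X$ and $\tau\in \dot{\D}(X)$'' cannot occur: otherwise $-\alpha=\beta+\tau$ would have its entire simple-root support inside $X$, forcing $\alpha\in \dot{\D}(X)$ and contradicting $\alpha\in \dot{\D}(\fu^-)$. Thus either $\beta\notin X$, in which case I would take $\gamma=\tau$ and any nonzero $z\in \g_\tau$, so that $[z,y]$ lies in the root space $\g_{-\beta}\subseteq \fu^-$; or $\beta\in X$ and $\tau\notin \dot{\D}(X)$, in which case I would take $\gamma=\beta$ and any nonzero $z\in \g_\beta$, so that $[z,y]$ lies in $\g_{-\tau}\subseteq \fu^-$. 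In each case, membership of the target root space in $\fu^-$ comes directly from the dichotomy controlling which of $\beta$ or $\tau$ lies outside $X$ or $\dot{\D}(X)$.

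Non-vanishing of $[z,y]$ is then immediate from the one-dimensionality of the root spaces of the Lie superalgebras under consideration (the list excludes $A(n,n)$ and $\fsl(n,n)$ precisely because of the higher-dimensional root spaces there): whenever $\mu+\nu\in \dot{\D}$ for roots $\mu,\nu$, we have $[\g_\mu,\g_\nu]=\g_{\mu+\nu}\neq 0$, and so $[z,y]\neq 0$ for any nonzero $z\in \g_\gamma$.

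The main obstacle is guaranteeing that the weight of $[z,y]$ stays in $\fu^-$ rather than slipping into $\dot{\D}(X)$; the case split above is engineered precisely so that a simple root outside $X$ remains in the support of the bracket weight. A secondary but standard point to verify is the Chevalley-type decomposition $-\alpha=\beta+\tau$ for contragredient Lie superalgebras, which follows from the well-known fact that every non-simple positive root of such an algebra is the sum of a simple positive root and a positive root.
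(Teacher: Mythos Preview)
Your argument is correct and is actually more conceptual than the paper's proof. The paper proceeds by induction on the rank of $\g$: when $\operatorname{Supp}(\alpha)\neq\dot{\S}$ it reduces to a proper Levi subalgebra, and when $\operatorname{Supp}(\alpha)=\dot{\S}$ it carries out an explicit case-by-case check through the root systems of $A(m-1,n-1)$, $B(m,n)$, $B(0,n)$, $C(n+1)$, $D(m,n)$, $F(4)$, $G(3)$, $D(2,1;a)$, in each case exhibiting a \emph{simple} root $\gamma$ with $\alpha+\gamma\in\dot{\D}(\fu^-)$. Your approach replaces all of this by the single decomposition $-\alpha=\beta+\tau$ and the dichotomy on whether $\beta\in X$; in particular you allow $\gamma=\tau$ to be non-simple, which is perfectly fine since the lemma only asks for $\gamma\in\dot{\D}(\fn^+)$. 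What your argument buys is uniformity and brevity; what the paper's buys is that $\gamma$ is always simple, which is slightly sharper but not needed downstream.

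Two small remarks. First, the decomposition $-\alpha=\beta+\tau$ with $\beta\in\dot{\S}$ and $\tau\in\dot{\D}^+$ is exactly the fact the paper invokes elsewhere (proof of Proposition~\ref{prop:par.set}, citing \cite{FSS00}), so you are on solid ground. Second, your parenthetical explanation for why $\fsl(n,n)$ and $A(n,n)$ are excluded is not quite right: their root spaces are still one-dimensional; the issue (see Remark~\ref{rmk:fix.not.A(n,n).sl(n,n)}) is that the simple roots are linearly dependent, so the combinatorics of bases breaks down. This does not affect the validity of your proof for the algebras under consideration, since for all of them the standard fact $[\g_\mu,\g_\nu]=\g_{\mu+\nu}$ whenever $\mu+\nu\in\dot{\D}$ holds.
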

\begin{proof}
 The proof is done by induction on the rank $\rho$ of $\g$. If $\rho=1$, then $X=\emptyset$, $\fu^\pm=\fn^\pm$ and hence the result follows.

Assume $\rho>1$ and fix $\dot{\S}=\{\alpha_i\}_{i=1}^\rho$, a distinguished set of simple roots. Recall the distinguished Dynkin diagram associated to this choice and the enumeration of such a diagram (see \cite[pg.~343 - 360]{FSS00}). Write $\alpha=-\sum n_i\alpha_i$, and define $\Supp(\alpha):=\{\alpha_i\in \dot{\S}\mid n_i\neq 0\}$. If $\Supp (\alpha)\neq \dot{\S}$, then $\alpha$ belongs to a subalgebra of $\g$ with smaller rank, and the result follows by induction. Suppose then $\Supp (\alpha)=\dot{\S}$, that is, $n_i>0$ for all $i$. Now the proof can be completed case by case. We leave the details for the reader.
\details{
\subsection*{Case $A(m-1,n-1)=\frak{sl}(n,m)$.}
In this case $\dot{\S}=\{\alpha_1,\ldots,\alpha_n,\ldots, \alpha_{m+n-1}\}$, and the only possibility is
	\[
\alpha=-\alpha_1-\cdots-\alpha_{m+n-1}=-(\delta_1-\varepsilon_m)
	\]
If $X$ is not a string (i.e. its roots are not all connected in the distinguished Dynkin diagram), then $\alpha_j\notin X$ for some $2\leq j\leq m+n-1$. In particular, $\alpha+\alpha_1=-(\delta_2-\varepsilon_m)\in \D(\fu^-)$. On the other hand, if $X$ is a string, then since we are assuming $X\neq \dot{\S}$, either $\alpha_1\notin X$ or $\alpha_{m+n-1}\notin X$. Thus, either 
\begin{align*}
\alpha+\alpha_{m+n-1}= & -\alpha_1-\cdots-\alpha_{m+n-2}=-(\delta_1-\varepsilon_{m-1})\in \D(\fu^-),\text{ if } \alpha_1\notin X \text{ or }\\
\alpha+\alpha_1=& -\alpha_2-\cdots-\alpha_{m+n-1}=-(\delta_2-\varepsilon_m)\in \D(\fu^-)\text{ if }\alpha_{m+n-1}\notin X.
\end{align*}

\subsection*{Case $B(m,n)=\frak{osp}(2m+1, 2n)$.}
In this case $\dot{\S}=\{\alpha_1,\ldots,\alpha_n,\ldots, \alpha_{m+n}\}$, and the possibilities are
\begin{align*}
\alpha= & -\alpha_1-\cdots-\alpha_{l-1}-2\alpha_l-\cdots -2\alpha_{n+m}=-(\delta_1+\delta_l),\quad 1< l\leq n\\
\alpha=& -2\alpha_1-\cdots -2\alpha_{n+m}=-2\delta_1 \\
\alpha= & -\alpha_1-\cdots-\alpha_{n+i-1}-2\alpha_{n+i}-\cdots -2\alpha_{n+m}=-(\delta_1+\varepsilon_i),\quad 1\leq i\leq m\\
\alpha=& -\alpha_1-\cdots -\alpha_{n+m}=-\delta_1 
\end{align*}
For the three first cases, we have
\begin{align*}
\alpha+\alpha_l= & -\alpha_1-\cdots-\alpha_{l}-2\alpha_{l+1}-\cdots -2\alpha_{n+m}=-(\delta_1+\delta_{l+1}) \in \D(\fu^-)\text{ if } 1\leq l\leq n-1, \\
\alpha+\alpha_n= & -\alpha_1-\cdots-\alpha_{n}-2\alpha_{n+1}-\cdots -2\alpha_{n+m}=-(\delta_1+\varepsilon_1) \in \D(\fu^-)\text{ if } l = n; \\
\alpha+\alpha_1= & -\alpha_1-2\alpha_2-\cdots -2\alpha_{n+m}= -(\delta_1+\delta_2)\in \D(\fu^-); \\
\alpha+\alpha_{n+i}= & -\alpha_1-\cdots-\alpha_{n+i}-2\alpha_{n+i+1}-\cdots -2\alpha_{n+m}=-(\delta_1+\varepsilon_{i+1})\in \D(\fu^-)\text{ if } 1\leq i< m,\\
\alpha+\alpha_{n+m}= & -\alpha_1-\cdots-\alpha_{n+m}=-\delta_1\in \D(\fu^-)\text{ if } i= m;
\end{align*}
and the result follows since $X\neq \dot{\S}$. 

For the last, notice that if $X$ is not a string, then $\alpha_j\notin X$ for some $2\leq j\leq m+n$. In particular, 
	\[
\alpha+\alpha_1= -\alpha_2-\cdots -\alpha_{n+m}= -\delta_2\in \D(\fu^-).
	\]
On the other hand, if $X$ is a string, then since we are assuming $X\neq \dot{\S}$, either $\alpha_1\notin X$ or $\alpha_{n+m}\notin X$. In any case, we have that
\begin{align*}
\alpha+\alpha_{n+m}= & -\alpha_1-\cdots -\alpha_{n+m-1}= -(\delta_1-\varepsilon_{m})\in \D(\fu^-)\text{ if } \alpha_1\notin X,\text{ or } \\
\alpha+\alpha_1= & -\alpha_2-\cdots -\alpha_{n+m}= -\delta_2\in \D(\fu^-)\text{ if } \alpha_{n+m}\notin X.
\end{align*}

\subsection*{Case $B(0,2n)=\frak{osp}(1, 2n)$.}
In this case $\dot{\S}=\{\alpha_1,\ldots, \alpha_{n}\}$, and the possibilities are
\begin{align*}
\alpha= & -\alpha_1-\cdots-\alpha_{l-1}-2\alpha_l-\cdots -2\alpha_n=-(\delta_1+\delta_l), \quad 1\leq l \leq n, \\
\alpha=& -2\alpha_1-\cdots -2\alpha_{n}=-2\delta_1 \\
\alpha=& -\alpha_1-\cdots -\alpha_{n}=-\delta_1 
\end{align*}
For the two first cases, we have
\begin{align*}
\alpha+\alpha_l= & -\alpha_1-\cdots-\alpha_{l}-2\alpha_{l+1}-\cdots -2\alpha_n=-(\delta_1+\delta_{l+1}) \in \D(\fu^-)\text{ if } 1\leq l<n, \\
\alpha+\alpha_n= & -\alpha_1-\cdots-\alpha_n=-\delta_1 \in \D(\fu^-)\text{ if } l=n; \\
\alpha+\alpha_1= & -\alpha_1-2\alpha_2-\cdots -2\alpha_{n}= -(\delta_1+\delta_2)\in \D(\fu^-); \\
\end{align*}
and the result follows since $X\neq \dot{\S}$.

For the last case, If $X$ is not a string, then $\alpha_j\notin X$ for some $2\leq j\leq n$. In particular, 
	\[
\alpha+\alpha_1= -\alpha_2-\cdots -\alpha_{n}= -\delta_2\in \D(\fu^-).
	\]
If $X$ is a string, then since we are assuming $X\neq \dot{\S}$, either $\alpha_1\notin X$ or $\alpha_n\notin X$. Hence, either
\begin{align*}
\alpha+\alpha_{n}= & -\alpha_1-\cdots -\alpha_{n-1}= -(\delta_1-\delta_n) \in \D(\fu^-)\text{ if }\alpha_1\notin X\\
\alpha+\alpha_{1}= & -\alpha_2-\cdots -\alpha_n= -\delta_2  \in \D(\fu^-)\text{ if }\alpha_n \notin X.
\end{align*}

\subsection*{Case $C(n+1)=\frak{osp}(2, 2n)$.}
In this case $\dot{\S}=\{\alpha_1,\ldots, \alpha_{n+1}\}$, and the possibilities are
\begin{align*}
\alpha= & -\alpha_1-\cdots-\alpha_k-2\alpha_{k+1}-\cdots-2\alpha_n-\alpha_{n+1}=
-(\varepsilon+\delta_k),\quad 1\leq k< n \\
\alpha=& -\alpha_1-\cdots -\alpha_{n+1}=-(\varepsilon+\delta_n)
\end{align*}
For the first equation we get
\begin{align*}
\alpha+\alpha_{k+1}= & -\alpha_1- \cdots-\alpha_{k+1}-2\alpha_{k+2}-\cdots-2\alpha_n-\alpha_{n+1}= -(\varepsilon+\delta_k)\in \D(\fu^-) \text{ if } k\leq n-2 \\
\alpha+\alpha_{n}= & -\alpha_1- \cdots-\alpha_{n+1}= -(\varepsilon+\delta_n)\in \D(\fu^-) \text{ if } k=n-1,
\end{align*}
and the result follows, since $X\neq\dot{\S}$.

For the last case, if $X$ is not a string, then $\alpha_j\notin X$ for some $2\leq j\leq n+1$. In particular, 
	\[
\alpha+\alpha_1= -\alpha_2-\cdots -\alpha_{n+1}= -(\delta_1+\delta_n)\in \D(\fu^-).
	\]
If $X$ is a string, then since we are assuming $X\neq \dot{\S}$, either $\alpha_1\notin X$ or $\alpha_{n+1}\notin X$. Hence, either
\begin{align*}
\alpha + \alpha_{n+1} = & -\alpha_1 - \cdots  - \alpha_{n} = -(\varepsilon - \delta_n) \in \D(\fu^-)\text{ if }\alpha_1\notin X \\
\alpha + \alpha_1 = & -\alpha_2 - \cdots  - \alpha_{n+1} = -(\delta_1 - \delta_n) \in \D(\fu^-) \text{ if }\alpha_{n+1}\notin X.
\end{align*}

\subsection*{Case $D(m,n)=\frak{osp}(2m, 2n)$.}
In this case $\dot{\S}=\{\alpha_1,\ldots,\alpha_n,\ldots, \alpha_{n+m}\}$, and the possibilities are
\begin{align*}
\alpha= & -\alpha_1-\cdots-\alpha_{l-1}-2\alpha_l-\cdots-2\alpha_{n+m-2}-\alpha_{n+m-1}-
\alpha_{n+m} \\
= & -(\delta_1+\delta_l),\quad 1\leq l\leq n \\
\alpha=& -2\alpha_1-\cdots -2\alpha_{n+m-2}-\alpha_{n+m-1}-\alpha_{n+m}=-2\delta_1 \\
\alpha=& -\alpha_1-\cdots -\alpha_{n+i-1}-2\alpha_{n+i}-\cdots -2\alpha_{n+m-2}-\alpha_{n+m-1}-\alpha_{n+m} \\
=& -(\delta_1+\varepsilon_i),\quad 1\leq i<m-1 \\
\alpha=& -\alpha_1-\cdots -\alpha_{n+m-2}-\alpha_{n+m-1}-\alpha_{n+m}=-(\delta_1+\varepsilon_{m-1}).
\end{align*}
For the three first cases, we have
\begin{align*}
\alpha+\alpha_l= & -\alpha_1-\cdots-\alpha_l-2\alpha_{l+1}-\cdots-2\alpha_{n+m-2}-\alpha_{n+m-1}-
\alpha_{n+m} \\
= & -(\delta_1+\delta_{l+1})\in \D(\fu),\quad 1\leq l< n \\
\alpha+\alpha_n=& -\alpha_1-\cdots -\alpha_{n}-2\alpha_{n+1}-\cdots -2\alpha_{n+m-2}-\alpha_{n+m-1}-\alpha_{n+m} \\
=& -(\delta_1+\varepsilon_1)\in \D(\fu),\quad l=n \\
\alpha+\alpha_1=& -\alpha_1-2\alpha_2-\cdots -2\alpha_{n+m-2}-\alpha_{n+m-1}-\alpha_{n+m}=-(\delta_1+\delta_{2})\in \D(\fu) \\
\alpha+\alpha_{n+i}=& -\alpha_1-\cdots -\alpha_{n+i}-2\alpha_{n+i+1}-\cdots -2\alpha_{n+m-2}-\alpha_{n+m-1}-\alpha_{n+m} \\
=& -(\delta_1+\varepsilon_{i+1})\in \D(\fu),\quad 1\leq i<m-1.
\end{align*}
and the result follows, since $X\neq \dot{\S}$.

For the last case. If $X$ is not a string, then $\alpha_j\notin X$ for some $2\leq j\leq m+n$. In particular, 
	\[
\alpha+\alpha_1=-\alpha_2-\cdots -\alpha_{n+m-2}-\alpha_{n+m-1}-\alpha_{n+m}=-(\delta_2+\varepsilon_{m-1})\in \D(\fu)
	\]
If $X$ is a string, then since we are assuming $X\neq \dot{\S}$, either $\alpha_1\notin X$ or $\alpha_{m+n}\notin X$. Hence, either
\begin{align*}
\alpha+\alpha_{n+m}=& -\alpha_1-\cdots -\alpha_{n+m-2}-\alpha_{n+m-1}=-(\delta_1-\varepsilon_{m})\in \D(\fu), \text{ if }\alpha_1\notin X,\text{ or } \\
\alpha+\alpha_1=& -\alpha_2-\cdots -\alpha_{n+m-2}=-(\delta_2+\varepsilon_{m-1})\in \D(\fu), \text{ if } \alpha_{m+n}\notin X.
\end{align*}

\subsection*{Case $F(4)$.}
In this case $\dot{\S}=\{\alpha_1,\alpha_2,\alpha_3,\alpha_4\}$, and the possibilities are
\begin{align*}
\alpha= & - 2\alpha_1-3\alpha_2-2\alpha_3-\alpha_4 \\
\alpha= & - \alpha_1-2\alpha_2-\alpha_3-\alpha_4 \\
\alpha= & - \alpha_1-2\alpha_2-2\alpha_3-\alpha_4 \\
\alpha= & - \alpha_1-3\alpha_2-2\alpha_3-\alpha_4 \\
\alpha= & - \alpha_1-\alpha_2-\alpha_3-\alpha_4.
\end{align*}
For the four first cases, we have
\begin{align*}
\alpha+\alpha_1= & - \alpha_1-3\alpha_2-2\alpha_3-\alpha_4\in \D(\fu^-) \\
\alpha+\alpha_2= & -\alpha_1-\alpha_2-\alpha_3-\alpha_4\in \D(\fu^-) \\
\alpha+\alpha_3= & -\alpha_1-2\alpha_2-\alpha_3-\alpha_4\in\D(\fu^-) \\
\alpha+\alpha_2= & -\alpha_1-2\alpha_2-2\alpha_3-\alpha_4\in\D(\fu^-).
\end{align*}
and the result follows from the fact that $X\neq \dot{\S}$.

For the last case, if $X$ is not a string, then $\alpha_j\notin X$ for some $2\leq j\leq 4$. In particular, 
	\[
\alpha+\alpha_1= -\alpha_2-\alpha_3-\alpha_4\in \D(\fu^-).
	\]
If $X$ is a string, then since we are assuming $X\neq \dot{\S}$, either $\alpha_1\notin X$ or $\alpha_4\notin X$. Hence, either
\begin{align*}
\alpha+\alpha_4 = & -\alpha_1-\alpha_2-\alpha_3\in \D(\fu^-),\text{ if } \alpha_1\notin X, \text{ or } \\
\alpha+\alpha_1= & -\alpha_2-\alpha_3-\alpha_4\in \D(\fu^-) \text{ if } \alpha_4\notin X.
\end{align*}

\subsection*{Case $G(3)$.}
In this case $\dot{\S}=\{\alpha_1,\alpha_2,\alpha_3\}$, and the possibilities are
\begin{align*}
\alpha= & - 2\alpha_1-4\alpha_2-2\alpha_3 \\
\alpha= & - \alpha_1-2\alpha_2-\alpha_3 \\
\alpha= & - \alpha_1-3\alpha_2-\alpha_3 \\
\alpha= & - \alpha_1-3\alpha_2-2\alpha_3 \\
\alpha= & - \alpha_1-4\alpha_2-2\alpha_3 \\
\alpha= & - \alpha_1-\alpha_2-\alpha_3.
\end{align*}
For the first five cases, we have
\begin{align*}
\alpha+\alpha_1= & - \alpha_1-4\alpha_2-2\alpha_3\in\D(\fu^-) \\
\alpha+\alpha_2= & - \alpha_1-\alpha_2-\alpha_3\in\D(\fu^-) \\
\alpha+\alpha_2= & - \alpha_1-2\alpha_2-\alpha_3\in\D(\fu^-) \\
\alpha+\alpha_3= & - \alpha_1-3\alpha_2-\alpha_3\in\D(\fu^-) \\
\alpha+\alpha_2= & - \alpha_1-3\alpha_2-2\alpha_3\in\D(\fu^-).
\end{align*}
and the result follows, since $X\neq \dot{\S}$.

For the last case, if $X$ is not a string, then $\alpha_2\notin X$. In particular, 
	\[
\alpha+\alpha_1= -\alpha_2-\alpha_3\in\D(\fu^-).
	\]
If $X$ is a string, then since we are assuming $X\neq \dot{\S}$, either $\alpha_1\notin X$ or $\alpha_3\notin X$. Hence, either
\begin{align*}
\alpha+\alpha_3= & -\alpha_1-\alpha_2\in\D(\fu^-)\text{ if }\alpha_1\notin X,\text{ or } \\
\alpha+\alpha_1= & -\alpha_2-\alpha_3\in\D(\fu^-) \text{ if }\alpha_3\notin X.
\end{align*}

\subsection*{Case $D(2,1;\alpha)$.}
In this case $\dot{\S}=\{\alpha_1,\alpha_2,\alpha_3\}$, and the possibilities are
\begin{align*}
\alpha= & - 2\alpha_1-\alpha_2-\alpha_3 \\
\alpha= & - \alpha_1-\alpha_2-\alpha_3.
\end{align*}
For the first case, we have that
	\[
\alpha+\alpha_1= - \alpha_1-\alpha_2-\alpha_3\in\D(\fu^-),
	\]
and the result follows from the fact that $X\neq \dot{S}$.

For the last case, if $X$ is not a string, then $\alpha_2\notin X$. In particular, 
	\[
\alpha+\alpha_3= - \alpha_1-\alpha_2\in\D(\fu^-).
	\]
If $X$ is a string, then since we are assuming $X\neq \dot{\S}$, either $\alpha_1\notin X$ or $\alpha_3\notin X$. Hence, either
\begin{align*}
\alpha+\alpha_3= & - \alpha_1-\alpha_2\in\D(\fu^-)\text{ if } \alpha_1\notin X \\
\alpha+\alpha_2= & - \alpha_1-\alpha_3\in\D(\fu^-)\text{ if } \alpha_3\notin X.
\end{align*}
Thus the proof is complete.
}
\end{proof}

\begin{rem}
Although in this section we are assuming that the set $X$ does not have a connected component consisting of only one isotropic odd root, for the proof of Lemma~\ref{lem:tec.1} we do not use this fact.
\end{rem}

Now we reproduce the ordered basis for $L(\g)$ given in \cite{Cox94}. Let $\lambda,\mu\in \cH^*$, we say $\lambda>\mu$ if and only if $\lambda-\mu\in Q(\cN^+)=\sum_{\alpha\in P(\dot{\S}, X)} \Z_{\geq 0}\alpha$. For each $\alpha\in \dot{\D}^+(\dot{\S})$, one can find nonzero vectors $z_\alpha\in \g_\alpha$, $z_{-\alpha}\in \g_{-\alpha}$ and $h_\alpha\in \fh$ such that $[z_{\alpha}, z_{-\alpha}]=h_\alpha$. Each such triple generate a subalgebra of $\g$ which is isomorphic to either $\frak{sl}(2)$, $\frak{osp}(1,2)$ or $\frak{sl}(1,1)$. In particular, if we recall that $h_0$ denotes the identity matrix $I_{m,n}$ of $\fgl(m,n)$, then we have 
\begin{align*}
& C=\{z_\alpha, z_{-\alpha}\mid \alpha\in \dot{\D}^+(\dot{\S})\}\cup \{h_i:=h_{\alpha_i}\mid \alpha_i\in \dot{\S}\} \text{ is a basis of } \g\text{ if } \g\neq \fgl(m,n),\\
& C=\{z_\alpha, z_{-\alpha} \mid \alpha\in \dot{\D}^+(\dot{\S})\}\ \cup \{h_i:=h_{\alpha_i},\ h_0\mid \alpha_i\in \dot{\S}\} \text{ is a basis of } \fgl(m,n).
\end{align*}

Let $\{x_j\}_{j=1}^{\dim \fm^+}\subseteq C$ be a basis of $\fm^+$  and $\{x_j\}_{-\dim \fm^-}^{j=-1}\subseteq C$ be a basis of $\fm^-$. Let us now order $\fm$ as follows
\begin{enumerate}
\item If $x_i\in \g_{\beta_i}$ and $x_j\in \g_{\beta_j}$, with $\beta_i<\beta_j$, then $i<j$.
\item If $x_i\in \g_{\beta_i}$, then $x_{-i}\in \g_{-\beta_i}$.
\end{enumerate}
We define $x_i<x_j$ if $i<j$. Next let $\{y_j\}_{j=1}^{\dim \fu^+}\subseteq C$ be a basis of $\fu^+$ and $\{y_j\}_{-\dim \fu^-}^{j=-1}\subseteq C$ be a basis of $\fu^-$ and order these basis as above. Therefore we define a totally order on our basis $C$ by setting
	\[
\fu^-<\fm^-<h_i<h_j<\fm^+<\fu^+,
	\]
where $1\leq i<j\leq \dim \fh$. Notice that the order relation above is compatible with the partial order on $\g$ induced by $\dot{\D}^+(\dot{\S})$.

Let us now give an order for a basis of $L(\g)$. First consider the basis $B(L(\g))=\{ z(m)\mid z\in C, m\in \Z\}$ of $L(\g)$. Define a total order on $B(L(\g))$ by setting
	\[
z(m)<w(n),
	\]
if either $m<n$ or $m=n$ and $z<w$. 

Consider the multi-index notation: for $r\in \Z_{\geq 1}$, we set 
	\[
(\bi, \bm,\bp) = (i_1,\ldots, i_r, m_1,\ldots, m_r, p_1,\ldots, p_r)\in \Z^{3r},
	\]
and define
	\[
\bar{y}  =  y_{\bi, \bm,\bp} = y_{i_1}(m_1)^{p_1}\cdots y_{i_r}(m_r)^{p_r} \in \bU(\cU^-).
	\]
Since $B(\cU^-)=\{y_j(m)\mid m\in \Z\}$ is a basis for $\cU^-$, by PBW Theorem, $\bU(\cU^-)$ has a basis given by 
	\[
B(\bU(\cU^-)) = \{{\bar y}=y_{\bi, \bm,\bp} \mid  y_{i_j}(m_j)<y_{i_{j+1}}(m_{j+1}),\ y_{i_j}(m_j)\in B(\cU^-)\}.
	\]
We say that $\bar y=0$ if some $p_i<0$ and that $\bar y=1$, if $p_i=0$ for all $i=1\ldots, r$. For an element ${\bar z}=z_{i_1}(m_1)^{p_1} \cdots z_{i_r}(m_r)^{p_r}\in \bU(\cU)$, we define
\begin{align*}
&{\bar z}^{\hat j}=z_{i_1}(m_1)^{p_1}\cdots z_{i_j}(m_j)^{p_j-1}\cdots z_{i_r}(m_r)^{p_r} \\
& {\bar z}^{\hat{j}\hat{\ell}}=z_{i_1}(m_1)^{p_1}\cdots z_{i_j}(m_j)^{p_j-1}\cdots z_{i_\ell}(m_\ell)^{p_\ell-1}\cdots z_{i_r}(m_r)^{p_r}, \text{ if } j\neq \ell\\
& {\bar z}^{\hat{j}\hat{\ell}}=z_{i_1}(m_1)^{p_1}\cdots z_{i_j}(m_j)^{p_j-2}\cdots z_{i_r}(m_r)^{p_r},\text{ if } j=\ell.
\end{align*}

Let $\bU(\cU^-)_{(p)}$ be the subspace of $\bU(\cU^-)$ spanned by all ${\bar y}$ in $B(\bU(\cU^-))$ with degree $\leq p$ (here $\deg ({\bar y})= \sum p_i$). Set also $\bU(\cU^-)_{(p)}=0$, if $p<0$. It is clear that $\bU(\cU^-)=\sum_{p\geq 0} \bU(\cU^-)_{(p)}$. If $u\in \bU(\cU^-)_{(p)}$ and $u\notin \bU(\cU^-)_{(p-1)}$, then we say $u$ has degree $p$. If ${\bar y}\in \bU(\cU^-)_{(p)}$ and ${\bar y}'\in \bU(\cU^-)_{(q)}$, with $p<q$, then we say ${\bar y}<{\bar y}'$. Now we order the monomials in $\bU(\cU^-)_{(p)}$ reverse lexicographically (with respect to $(\bi,\bm,\bp)$). Therefore we have a total order on $\bU(\cU^-)$. With respect to such an order, we have for instance: 
	\[
y_{-2}(10)<y_{-1}(2)^3y_{-3}(4)^2<y_{-1}(2)y_{-3}(4)^4.
	\]

If $V$ is an $\cM$-module with totally ordered basis $\{v_j\}_{j\in J}$, then 
	\[
\Ind^{\cG}_{\cM} V\cong \bU(\cU^-)\otimes_{\C} V
	\]
has basis consisting of elements of the form ${\bar y}v_j$. Order this basis lexicographically: ${\bar y}v_i< {\bar y}'v_j$ if either ${\bar y}< {\bar y}'$ or if ${\bar y}={\bar y}'$ and $i<j$.

For a nonzero element
	\[
u=\sum a_{\bi,\bm,\bp}^i y_{\bi,\bm,\bp} v_i,\quad\text{where}\quad a_{\bi,\bm,\bp}^i\in \C,
	\]
we define
	\[
\LinSpan(u)=\Span\{y_{\bi,\bm,\bp} v_i \mid a_{\bi,\bm,\bp}^i \neq 0\}.
	\]

Before stating the next result, define
\begin{align*}
J^c= & J^c(z,\bar y)=\{1\leq j\leq r\mid [y_{i_j}, z]\in \fu^+\oplus \fm\} \\
J_-= & J_-(z,\bar y)=\{1\leq j\leq r\mid [y_{i_j}, z]\in \fu^-\} \\
J_-^l= & J_-^l(z,\bar y, e)=\{j\in J_-\mid y_{i_j}(m_j)\geq [z, y_{i_j}](e+m_j)\}\\
J_-^r= & J_-^r(z,\bar y, e)=\{j\in J_-\mid y_{i_j}(m_j)< [z, y_{i_j}](e+m_j)\}.
\end{align*}

The next result is an adaptation of \cite[Lemma~4.3]{Cox94} to the super setting.

\begin{lem}\label{lem:tec.2}
Suppose that $\lambda(K)\neq 0$ and let $N$ be a subquotient of $M_\cM(\lambda)$. Let $0\neq v \in N$, $z\in \g$, $e\in \Z$, and $\bar{y} = y_{i_1}(m_1)^{p_1}\cdots y_{i_r}(m_r)^{p_r}$, with $\deg{\bar y}\geq 1$. If either $z\in \fu^+$ and $e\ll 0$, or $e\gg 0$ and $z\in \fm^+$, then
\begin{align*}
z(e){\bar y}v & \equiv  \sum_{j\in J_-^l} (-1)^{S_{1_j}} p_j[z, y_{i_j}](m_j+e){\bar y}^{j}v +  \sum_{j\in J_-^r\cup J^c} (-1)^{S_{1_j}}  p_j {\bar y}^{j} [z, y_{i_j}](m_j+e) v \\ 
& + \sum_{j\in J_-^l\setminus \{1\}} (-1)^{S_{1_j}S_{2_j}S_{3_j}} \sum_{\eta=1}^{j-1} p_\eta p_j [y_{i_\eta},[z, y_{i_j}]](e+m_j+m_\eta){\bar y}^{\hat{j}\hat{\eta}}v \\
& + \sum_{j\in J_-^r\cup J^c \setminus \{r\}} (-1)^{S_{1_j}S_{2_j}S_{3_j}} \sum_{\eta=j+1}^{r} [[z, y_{i_j}],y_{i_\eta}](e+m_j+m_\eta){\bar y}^{\hat{j}\hat{\eta}}v \\
& +  \sum_{1\leq j\leq r} (-1)^{S_{1_j}S_{2_j}} \frac{p_j(p_j-1)}{2}[y_{i_j},[z, y_{i_j}]](2m_j+e){\bar y}^{{\hat j}{\hat j}}v  \quad \mod \bU(\cU^-)_{(p-2)}\otimes N.
\end{align*}
where $p= \sum p_i$, and $S_{i_j}$ depends on the parity of $y_{i_j}$ and $z$.
\end{lem}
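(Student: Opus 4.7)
The plan is a direct super-commutator computation: I would move $z(e)$ to the right through the PBW monomial $\bar y$ using the fundamental super-commutator identity
\[
z(e)\, y(m) = (-1)^{|z||y|} y(m)\, z(e) + [z, y](m+e) + m \delta_{m, -e}(z | y)\, K,
\]
and then rewrite the resulting terms in PBW-normal form. The hypothesis on $e$ plays two roles. First, since the $m_j$'s are fixed by $\bar y$, taking $|e|$ sufficiently large (positive or negative according to the case) forces $m_j + e \neq 0$ for every $j$, so each central contribution $m_j\delta_{m_j,-e}(z|y_{i_j})K$ vanishes. Second, in either case the endpoint $z(e)$ either lies in $\cU^+$, which acts trivially on $v$ under the implicit extension of $N$ to a $\cP$-module, or else maps $v$ to a weight outside the weight support of $N$; hence the ``pushed-through'' term $\bar y\, z(e) v$ is zero.

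Expanding $z(e)\bar y$ via the super-Leibniz rule yields the first-order sum
\[
\sum_j p_j (-1)^{S_{1_j}}\, \bar y^{\hat j}\, [z, y_{i_j}](m_j+e)\, v,
\]
where $(-1)^{S_{1_j}}$ records the super-parity accumulated by moving $z$ past $y_{i_1}^{p_1} \cdots y_{i_{j-1}}^{p_{j-1}}$ together with one copy of $y_{i_j}$. Each such term must then be put into PBW-normal form. If $[z,y_{i_j}] \in \fu^-$, the factor $[z,y_{i_j}](m_j+e)$ remains in $\cU^-$, and the split into $J_-^l$ versus $J_-^r$ records whether in our chosen total order the new factor lies to the left or to the right of the position vacated by $y_{i_j}(m_j)$, which dictates where it must be inserted; if instead $[z,y_{i_j}] \in \fm \oplus \fu^+$ (the case $J^c$), the commutator gets pushed through the remaining factors to act on $v$.

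Each swap of $[z,y_{i_j}](m_j+e)$ past a neighbouring factor $y_{i_\eta}(m_\eta)$ produces a second-order commutator $[y_{i_\eta},[z,y_{i_j}]](m_j+m_\eta+e)$ multiplied by $\bar y^{\hat j \hat \eta}$. Since $\deg \bar y^{\hat j \hat \eta} = p-2$, such a term survives modulo $\bU(\cU^-)_{(p-2)}\otimes N$ only when the double commutator lies in $\fu^-$; these are precisely the two double-commutator sums in the statement. Commuting $[z,y_{i_j}](m_j+e)$ past the remaining $p_j-1$ identical copies of $y_{i_j}(m_j)$ produces the self-commutator term, the binomial coefficient $\tfrac{p_j(p_j-1)}{2}$ coming from the number of ordered pairs among the $p_j$ copies. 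The composite signs $(-1)^{S_{1_j}S_{2_j}S_{3_j}}$ and $(-1)^{S_{1_j}S_{2_j}}$ encode the successive parity crossings in these reorderings.

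The main obstacle is the super-sign bookkeeping: in the purely even version \cite[Lemma~4.3]{Cox94} all signs are $+1$, whereas here one must verify at each stage that the parities accumulate correctly and that no degree-$(p-1)$ contribution is missed among the many partial rearrangements. In particular, when $y_{i_j}$ is odd isotropic one has $y_{i_j}(m_j)^{p_j}=0$ for $p_j\geq 2$, so one must check that the self-commutator term is consistent with this. Once the signs are tracked and the central and pushed-through terms are shown to vanish under the hypothesis on $e$, the identity falls out as a lengthy but routine bookkeeping exercise.
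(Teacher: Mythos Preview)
Your proposal is correct and follows essentially the same route as the paper: both proofs move $z(e)$ rightwards through $\bar y$ via the super-Leibniz rule, use the hypothesis on $e$ to kill the pushed-through term $\bar y\, z(e)v$, and then reorder the resulting first-order commutators into PBW form, the $J_-^l/J_-^r/J^c$ split governing the direction of reordering and the double- and self-commutator sums collecting the degree-$(p-1)$ corrections. Your explicit remark that $|e|\gg 0$ also kills the central $K$-contributions is a point the paper leaves implicit, and your justification for $z(e)v=0$ in the $z\in\fm^+$ case (via weight-support considerations) is a legitimate alternative to the paper's terse ``$z(e)v=0$''; otherwise the arguments coincide.
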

\begin{proof}
Let $x^0=1$ for any $x\in \bU(\cG)$ and if the index set in the summation above is empty, then we take the summation to be zero.

Notice that
\begin{multline*}
z(e) y_{i_1}(m_1)^{p_1}\cdots y_{i_r}(m_r)^{p_r} \\
= \sum_{j=1}^{r} (-1)^{S_{1_j}}\sum_{\ell=0}^{p_j-1} y_{i_1}(m_1)^{p_1}\cdots y_{i_j}(m_j)^\ell [z, y_{i_j}](e+m_j) y_{i_j}(m_j)^{p_j-\ell-1}\cdots y_{i_r}(m_r)^{p_r}v \\
+  (-1)^{r|z|}y_{i_1}(m_1)^{p_1}\cdots y_{i_r}(m_r)^{p_r}z(e).
\end{multline*}
Hence, if either $z\in \fu^+$ and $e\ll 0$, or $e\gg 0$ and $z\in \fm^+$, then $z(e)v=0$, and we get that
\begin{align*}
z(e){\bar y}v & = \biggl( \sum_{j=1}^{r} (-1)^{S_{1_j}}\sum_{\ell=0}^{p_j-1} y_{i_1}(m_1)^{p_1}\cdots y_{i_j}(m_j)^\ell [z, y_{i_j}](e+m_j) y_{i_j}(m_j)^{p_j-\ell-1}\cdots y_{i_r}(m_r)^{p_r}v\biggr)  \\
& = \sum_{i\in J_-^l} (-1)^{S_{1_j}} \biggl(\sum_{\ell=0}^{p_j-1} [z, y_{i_j}](m_j+e){\bar y}^{\hat j}v \\ 
& + (-1)^{S_{2_j}} [y_{i_1}(m_1)^{p_1}\cdots y_{j}(m_j)^{\ell}, [z,y_{i_j}](m_j+e)] y_{j}(m_j)^{p_j-\ell-1}\cdots y_{I_r}(m_r)^{p_r} v\biggr) \\
& + \sum_{i\in J_-^r\cup J^c} (-1)^{S_{1_j}} \biggl(\sum_{\ell=0}^{p_j-1} {\bar y}^{\hat j}[z, y_{i_j}](m_j+e)v \\ 
& + (-1)^{S_{2_j}} y_{i_1}(m_1)^{p_1}\cdots y_{j}(m_j)^{\ell}[[z,y_{i_j}](m_j+e), y_{j}(m_j)^{p_j-\ell-1}\cdots y_{I_r}(m_r)^{p_r}]  v\biggr) \\
& \equiv  \sum_{j\in J_-^l} (-1)^{S_{1_j}} p_j[z, y_{i_j}](m_j+e){\bar y}^{j}v +  \sum_{j\in J_-^l} (-1)^{S_{1_j}S_{2_j}} \frac{p_j(p_j-1)}{2}[y_{i_j},[z, y_{i_j}]](2m_j+e){\bar y}^{{\hat j}{\hat j}}v \\ 
& + \sum_{j\in J_-^l\setminus \{1\}} (-1)^{S_{1_j}S_{2_j}S_{3_j}} \sum_{\eta=1}^{j-1} p_\eta p_j [y_{i_\eta},[z, y_{i_j}]](e+m_j+m_\eta){\bar y}^{\hat{j}\hat{\eta}}v \\
& + (-1)^{S_{1_j}}  \sum_{i\in J_-^r\cup J^c} p_j {\bar y}^{j} [z, y_{i_j}](m_j+e) {\bar x}  \\
& +  \sum_{j\in J_-^l\cup J^c} (-1)^{S_{1_j}S_{2_j}} \frac{p_j(p_j-1)}{2}{\bar y}^{{\hat j}{\hat j}}[[z, y_{i_j}],y_{i_j}](2m_j+e)v \\ 
& + \sum_{j\in J_-^r\cup J^c \setminus \{r\}} (-1)^{S_{1_j}S_{2_j}S_{3_j}} \sum_{\eta=j+1}^{r} [[z, y_{i_j}],y_{i_\eta}](e+m_j+m_\eta){\bar y}^{\hat{j}\hat{\eta}}v\quad \mod \bU(\cU^-)_{(p-2)}\otimes N.
\end{align*}
\end{proof}

Let $N$ be a simple subquotient of $M_\cM(\lambda)$. If $\lambda(K)\neq 0$, then by Corollary~\ref{cor:inducing.from.K.to.M}, $N$ is a free $\bU(\fH_X^-)$-module with basis $\{w_\xi\}_{\xi\in \Xi}$ consisting of weight vectors. Let $\{v_\eta\}_{\eta\in \Gamma}$ be a basis of weight vectors of $\bU(\fH_X^-)$. Then $B=\{v_\eta w_\xi\mid (\eta, \xi)\in \Gamma\times \Xi\}$ is a basis of weight vectors of $N$. Set $I := \Gamma\times \Xi$, and for any $i = (\xi, \eta)\in I$ we associate the vector $x_i = v_\eta w_\xi$. Since the set of weights of $N$ is contained in the coset $\lambda - Q(\cM^+)$, we can order the basis $B$ so that
\begin{equation}\label{cond.basis}
\text{ if } x_i \text{ has weight } \lambda_i \text{ and if } \lambda_i>\lambda_j, \text{ then } i>j.
\end{equation}

Now consider two finite sets $J$ and $S$, where $J\subseteq I$ and $\{m_r\}_{r\in S}\subseteq \Z$ such that $m_r\neq m_s$ if $r\neq s$. For a pair $(j,r)\in J\times S$, we pick $x_j=v_\eta w_\xi\in B$, and we choose $e\ll 0$, such that $h(e+m_r)$ commutes with $v_\eta$ (since no central term appear) for any $h\in \fh_X$. Then, we can write
	\[
h(e+m_r) x_j=\sum_{\xi'} a_{\xi'}v_\eta w_{\xi'}, \quad \text{ where}\quad a_{\xi'}\in \C.
	\]
Moreover, if $(i,s)\in J\times S$ is another pair, then for any $z\in \fh_X^\perp$ we can find $e\ll 0$ so that $z(e+m_s)x_i\in B$. In particular, we have the following lemma.

\begin{lem}\label{lem:tec.3}
With the above notation, if we choose $e\ll 0$, then
	\[
z(e+m_s)x_i\notin \Span\{ z(e+m_r)x_j, h(e+m_r)x_j\mid (i.s)\neq (j,r),\ (j,r)\in J\times S\}.
	\]
\end{lem}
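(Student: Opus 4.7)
The plan is to prove the lemma by a direct PBW-basis argument: for $e$ sufficiently negative, each $z(e+m_r)x_j$ is a single basis vector of $B$, and all such basis vectors (together with the basis vectors appearing in the $\bU(\fH_X^-)$-expansion of each $h(e+m_r)x_j$) are pairwise distinct from $z(e+m_s)x_i$, after which linear independence of $B$ closes the argument.

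First I would use the choice of $\fh_c$ made at the start of this section: since $(\fh_c\,|\,\fh_X)=0$ and $[\fh_c,\fh_X]=0$, one has $[h(n),h'(k)]=0$ for every $h\in\fh_X$, $h'\in\fh_c$, and all $n,k\in\Z$. Hence $h(e+m_r)$ commutes with every monomial in $\bU(\fH_X^-)$, and since $h(e+m_r)\in\cK$ acts on $L_\cK(\mu)$, one obtains
\[
h(e+m_r)x_j\;\in\;\Span\{\,v_{\eta_j}w_{\xi'}\mid \xi'\in\Xi\,\},
\]
a finite linear combination of basis vectors of $B$ whose $\bU(\fH_X^-)$-parts all equal $v_{\eta_j}$, containing no factor of the form $z\otimes t^{e+m}$.

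Next I would choose $e$ so negative that $e+m_r<k$ for every exponent $k$ occurring in any $v_{\eta_j}$ with $j\in J$ and every $r\in S$; since $J$ and $S$ are finite this can be arranged with a single $e$. For such $e$ the commutator $[z(e+m_r),v_{\eta_j}]$ has no central contribution (the equality $e+m_r=-k$ cannot hold, as $e+m_r<0$ while $-k>0$), so $z(e+m_r)$ commutes with $v_{\eta_j}$. Moreover, $z\otimes t^{e+m_r}$ is strictly smaller than every factor of every $v_{\eta_j}$ in the fixed PBW ordering, so the product $z(e+m_r)v_{\eta_j}$ is already in normal form. Thus each $z(e+m_r)x_j$ is the single basis vector $v'_{j,r}w_{\xi_j}\in B$, whose leading factor is $z\otimes t^{e+m_r}$ and whose remaining tail is $v_{\eta_j}$.

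Finally I would verify distinctness. The target $z(e+m_s)x_i=v'_{i,s}w_{\xi_i}$ differs from every $v_{\eta_j}w_{\xi'}$ appearing in an $h$-term, because the latter has no factor of the form $z\otimes t^{e+m}$. It differs from $v'_{j,r}w_{\xi_j}$ whenever $r\neq s$, because the leading factors $z\otimes t^{e+m_s}$ and $z\otimes t^{e+m_r}$ are distinct (as $m_r\neq m_s$). In the remaining case $r=s$, $j\neq i$, the pair $(\eta_j,\xi_j)$ differs from $(\eta_i,\xi_i)$, so $v'_{j,s}w_{\xi_j}$ differs from $v'_{i,s}w_{\xi_i}$ either in its tail $v_{\eta_j}$ or in its $L_\cK(\mu)$-factor $w_{\xi_j}$. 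Linear independence of $B$ over $\C$ then yields the desired non-membership. The only real obstacle is the bookkeeping needed to pick a single value of $e$ that simultaneously puts every $z(e+m_r)v_{\eta_j}$ into PBW normal form and rules out every potential central term; this is taken care of by the finiteness of $J$ and $S$.
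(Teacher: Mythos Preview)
Your proposal is correct and follows essentially the same approach as the paper. The paper does not write out a separate proof of this lemma; it records precisely the two observations you use---that $h(e+m_r)x_j=\sum_{\xi'}a_{\xi'}v_{\eta_j}w_{\xi'}$ with the $\bU(\fH_X^-)$-part unchanged, and that for $e\ll 0$ the element $z(e+m_s)x_i$ is itself a basis vector of $B$---and then states the lemma as an immediate consequence, so your detailed PBW bookkeeping is just a faithful elaboration of the intended argument.
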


Recall that $P=P(\dot{\S},\emptyset)\cup \langle\{-X\}\cup\{\pm \delta\}\rangle\subseteq \D$ is a parabolic set, 
	\[
\cG=\cU^-\oplus \cM \oplus \cU^+\quad\text{and}\quad  \cP=\cM\oplus \cU^+,
	\]
that for any $\cM$-module $F$ we have defined the generalized Verma type module $M_\cP(F):=\Ind^{\cG}_{\cM} (F)$, where $\cU^+\cdot F=0$, and that in this section we are assuming that:
\begin{equation*}
X \text{ does not have a connected component consisting of only one isotropic odd root. }
\end{equation*}

\begin{prop}\label{prop:inductive.argument}
Suppose that $\lambda(K)\neq 0$, and let $N$ be a subquotient of $M_\cM(\lambda)$ with a basis $B$ satisfying \eqref{cond.basis}. Then any non-zero submodule of $M_\cP(N)$ intersects $1\otimes N$ non-trivially.
\end{prop}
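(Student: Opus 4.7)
The plan is an induction on the degree $p = \deg u$ of elements of $M_\cP(N) \cong \bU(\cU^-) \otimes N$ in the PBW basis. Given a non-zero submodule $M \subseteq M_\cP(N)$, pick $0 \neq u \in M$ of minimal such degree. If $p = 0$, then $u \in 1 \otimes N$ and we are done. Otherwise, the strategy is to produce $z(e) \in \bU(\cG)$ with $0 \neq z(e) u \in M$ of strictly smaller degree, contradicting minimality.

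The degree reduction itself is automatic from Lemma~\ref{lem:tec.2}: for $z \in \fu^+$ with $e \ll 0$, or $z \in \fm^+$ with $e \gg 0$, the expression $z(e) \bar y v$ lies in $\bU(\cU^-)_{(p-1)} \otimes N$. The entire work is in ensuring non-vanishing. Write $u = \sum a_{\bi, \bm, \bp}^i \, y_{\bi, \bm, \bp} \, x_i$ and let $u^{\text{top}}$ collect the terms with $\deg y_{\bi,\bm,\bp} = p$. I pick an extremal monomial $\bar y^0 \, x_{i_0}$ in $u^{\text{top}}$ (say, lexicographically minimal in the PBW order described just before Lemma~\ref{lem:tec.2}), and select a factor $y_{i_{j_0}}$ of $\bar y^0$ whose underlying negative root is extremal. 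If this root is non-simple, Lemma~\ref{lem:tec.1} furnishes $z \in \fn^+$ with $0 \neq [z, y_{i_{j_0}}] \in \fu^-$; in the case that all roots of $\bar y^0$ are simple, one takes $z = z_{\beta}$ for a suitable $\beta \in \dot{\S} \setminus X$, so that $[z, y_{i_{j_0}}] = h_\beta \in \fm$ and the reduction is transferred to an action on $N$ via $h_\beta(m_{j_0} + e)$.

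Next I choose $e$ sufficiently extreme (negative if $z \in \fu^+$, positive if $z \in \fm^+$) so that the resulting mode $m_{j_0} + e$ is strictly separated from every mode produced by applying $z(e)$ to the other monomials of $u^{\text{top}}$. Combined with the extremality of $\bar y^0$ and Lemma~\ref{lem:tec.3} (which controls the linear independence of expressions of the form $z(e + m_r) x_j$ and $h(e + m_r) x_j$ over distinct $(j, r)$), this ensures that the leading degree-$(p - 1)$ contribution of $z(e)(\bar y^0 x_{i_0})$ is not cancelled by contributions coming from the other terms of $u^{\text{top}}$. Hence $z(e) u \neq 0$ has degree $\leq p - 1$, which closes the induction. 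The hypothesis $\lambda(K) \neq 0$ enters through Lemma~\ref{lem:tec.3} and the structure of $N$ as a subquotient described by Corollary~\ref{cor:inducing.from.K.to.M}, which is where the Heisenberg irreducibility of Lemma~\ref{lem:irre.of.Heisenberg.modules} is used to rule out accidental vanishing in the $N$-coefficients.

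The principal obstacle is exactly this non-cancellation step: a priori, the reductions of different monomials of $u^{\text{top}}$ under $z(e)$ could sum to zero. The combined use of the extremal choice of $\bar y^0$, the extreme choice of $e$, and Lemma~\ref{lem:tec.3} is precisely what is needed to block this, and also explains why both of Lemmas~\ref{lem:tec.1}--\ref{lem:tec.3} had to be prepared in advance.
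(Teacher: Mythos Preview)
Your overall shape is right and you cite the correct lemmas, but there is a genuine error in the degree-reduction step. You assert that Lemma~\ref{lem:tec.2} gives $z(e)\bar y v \in \bU(\cU^-)_{(p-1)}\otimes N$ automatically. This is false precisely in the case you invoke Lemma~\ref{lem:tec.1} for: when $[z,y_{i_{j_0}}]\in\fu^-$ (so $j_0\in J_-$), the leading term in Lemma~\ref{lem:tec.2} is $p_{j_0}[z,y_{i_{j_0}}](m_{j_0}+e)\,\bar y^{\hat{j_0}}v$, and since $[z,y_{i_{j_0}}](m_{j_0}+e)\in\cU^-$, this product lies in $\bU(\cU^-)_{(p)}\otimes N$, not $\bU(\cU^-)_{(p-1)}\otimes N$. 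So in the non-simple case your ``degree drop'' does not occur and the minimality contradiction does not fire.

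The paper's argument (following \cite{Cox94}) fixes this with a two-stage procedure rather than a single degree induction. Stage~1: while some factor of a top-degree monomial lies in a non-simple root space, apply $z(e)$ with $z$ chosen via Lemma~\ref{lem:tec.1}; the degree stays equal to $p$, but the $\dot\Delta$-part of the weight of the top monomials strictly increases, and since $|\dot\Delta|<\infty$ this terminates with every top-degree monomial supported on simple root spaces. Stage~2: only now does one apply $y_{-a_l}(e)$ with $a_l$ simple; because $[y_{-a_l},y_{i_j}]\in\fh\subset\fm$ for all simple factors, one has $J_-=\emptyset$ and Lemma~\ref{lem:tec.2} genuinely lands in $\bU(\cU^-)_{(p-1)}\otimes N$. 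Lemma~\ref{lem:tec.3} is then used exactly as you describe to prevent cancellation at this second stage. Your proof becomes correct once you replace the single degree induction by this double descent (first on the height of the $\dot\Delta$-component at fixed degree, then on degree).
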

\begin{proof}
It is sufficient to prove that if $0\neq u\in M_\cP(N)_{\lambda-\beta}$ for some $\beta\in Q(\cN^+)\setminus Q(\cM^+)$, then there exists $\alpha\in Q(\fn^+)\setminus Q(\fm^+)$, $p\in \Z$ and $Y\in \bU(\cN^+)_{\alpha+p\delta}$ such that $Yu\neq 0$. With Lemmas~\ref{lem:tec.1}, \ref{lem:tec.2} and \ref{lem:tec.3} on hand, the proof follows the same steps as that of \cite[Proposition~4.5]{Cox94}. 

\details{
Let us start noticing that $\{y_{\bi,\bm,\bp} x_i\}$ is a lexicographically ordered basis for $M_\cP(N)$. Moreover, the filtration (by the degree) of $\bU(\cU^-)$ induces a filtration by degree on $M_\cP(N)\cong \bU(\cU^-)\otimes N$. Let $0\neq u\in M_\cP(N)_{\lambda-\beta}$ for some $\beta\in Q(\cN^+)\setminus Q(\cM^+)$. The fact that $\beta\in Q(\cN^+)\setminus Q(\cM^+)$ implies that $\deg u>0$, that is, the degree of a maximal monomial occurring in $u$ is $>0$ (in fact, the degree zero component of $M_\cP(N)$ is $N$ whose weights lie in $\lambda - Q(\cM^+)$). Fix a monomial with maximal degree $y_{\ba, \bb, \bc} x_d$ occurring in $u$. Notice that there may exist many of the same degree. In particular, $\deg y_{\ba, \bb,\bc}>0$.

Assume that $J_-(z, {\bar y})\neq \emptyset$, for some $z\in \g_\gamma$ with $\gamma\in \D(\fu^+)\cup \D(\fm^+)$, and some ${\bar y}$ with maximal degree occurring in $u$. For such a $z$,  consider $M_z$ to be the set of all ${\bar y}$ with maximal degree occurring in $u$ such that $J_-(z, {\bar y})\neq \emptyset$. By assumption, $M_z\neq \emptyset$. Among all $y_{i_j}$ factors that occur in the elements of $M_z$, let $y_{a_k'}$ be the one whose associated root $\alpha_{a_k'}$ is minimal. Fix some monomial ${\bar y}_{\min}=y_{\ba', \bb', \bc'}\in M_z$ that contains $y_{a_k'}$ as a component, and define $J_{\min}=J_-(z, {\bar y}_{\min})$ and $J_{\min}^c=J^c(z, {\bar y}_{\min})$. Notice that ${\bar y}_{\min}$ may occur more than once in $u$. Let $j$ be an index such that $u_{\ba', \bb', \bc'}^j\neq 0$ and fix $x_j=x_{\min}$. The idea is to reduce to the case where all components $y_{i_j}$ of any ${\bar y}\in M_z$ lie in a root space associated to a simple root.

By Lemma~\ref{lem:tec.2}, we have that 
	\[
z(e){\bar y}_{\min} x_{\min} \equiv \sum_{j\in J_{\min}} (-1)^{S_{1_j}}c_j[z, y_{a_j'}](e+b_j'){\bar y}_{\min}^{\hat j} x_{\min} \quad \mod \bU(\cU^-)_{(c'-1)}\otimes N,
	\]
where $c'=\deg {\bar y}_{\min}$, and either $e\ll 0$ and $z\in \g_\gamma$ for $\gamma\in \D(\fu^+)$ or $e\gg0$ and $z\in \g_\gamma$ for $\D(\fm^+)$. Since $J_{\min}\neq 0$, and we are choosing $e\ll 0$ or $e\gg0$, the element $[z, y_{a_j'}](e+b_j'){\bar y}_{\min}^{\hat j}$ lie in $B(\bU(\cU^-))$, and hence $S_1\neq 0$, where $S_1$ denotes the first summation above. Suppose now ${\bar y}x$, with ${\bar y}=y_{\bi, \bm,\bp}$ and $x=x_i$ is a different monomial occurring in $u$. Then
	\[
z(e){\bar y} x \equiv \sum_{j\in J_{\min}} (-1)^{S_{1_j}}p_j[z, y_{i_j}](e+m_j){\bar y}^{\hat j} x \quad \mod \bU(\cU^-)_{(p-1)}\otimes N,
	\]
where $p=\deg {\bar y}$, and $p\leq c'$, by construction. Let $T_1$ be the summation above. If $T_1\in \LinSpan(S_1)+ \bU(\cU^-)_{(p-1)}\otimes N$, then one must have $p=c'$, $\C [z, y_{i_j}]=\C [z, y_{a_l'}]$, $m_j=b_l$, ${\bar y}^{\hat j}={\bar y}_{\min}^{\hat l}$ and $x=x_{\min}$, since we are taking $e\ll 0$ or $e\gg0$.  This would imply ${\bar y}x=y_{\min} x_{\min}$, which is a contradiction. Therefore $T_1\notin \LinSpan(S_1)+ \bU(\cU^-)_{(p-1)}\otimes N$. This proves that $z(e)u$ has the same degree as $u$ and that it sends each monomial of maximal degree to a monomial that has higher weight. 

Since $|\dot{\D}|<\infty$, after applying the above argument finitely many times, we may assume that $J_-(z, {\bar y})=\emptyset$, for all $z\in\g_\gamma$ for $\gamma\in \D(\fu^+)\cup \D(\fm^+)$ and all ${\bar y}$ with maximal degree occurring in $u$. By Lemma~\ref{lem:tec.1}, this implies that all components $y_{i_j}$ of any ${\bar y}$ with maximal degree lie in a simple root space (if $y_{i_j}\in \g_{\alpha}$, for some non simple root $\alpha\in \dot{\D}(\fu^-)$, then Lemma~\ref{lem:tec.1} would imply the existence of $z\in\g_\gamma$ with $\gamma\in \D(\fn^+)$ such that $0\neq [z,y_\alpha]\in \fu^-$, which contradicts $J_-(z, {\bar y})=\emptyset$). In particular, this is true for ${\bar y}_{\max}=y_{a_1}(b_1)^{c_1}\cdots y_{a_s}(b_s)^{c_s}$. Thus, taking $e\ll 0$ and $1\leq l\leq s$, we get, by Lemma~\ref{lem:tec.2}
\begin{align*}
y_{-a_l}(e){\bar y}_{\max} x_{\max} & \equiv \sum_{\substack{1\leq l\leq s \\ a_j=a_l}} (-1)^{S_{1_j}S_{2_j}} \frac{c_j(c_j-1)}{2}\alpha_{a_l}(h_{a_l}) y_{a_j}(e+2b_j){\bar y}^{{\hat j}{\hat j}} x_{\max} \\
& + \sum_{\substack{1\leq l\leq s-1 \\ a_j=a_l}} (-1)^{S_{1_j}S_{2_j}S_{3_j}} \sum_{\xi	=j+1}^s c_j c_\xi \alpha_{a_\xi}(h_{a_l}) y_{a_\xi}(b_j+b_\xi+e){\bar y}^{{\hat j}{\hat \xi}} x_{\max} \\
& + \sum_{\substack{1\leq l\leq s \\ a_j=a_l}} (-1)^{S_{1_j}}  c_j {\bar y}_{\max}^{\hat j} h_{a_l} (b_j+e) x_{\max}\quad  \mod \bU(\cU^-)_{(c-2)}\otimes N,
\end{align*}

Let $S_1$, $S_2$ and $S_3$ be the first, second, and third summation above. Fix $l$, $l=1,\ldots, s$, and consider the summand
	\[
w={\bar y}_{\max}^{\hat l} h_{a_l} (b_l+e) x_{\max}
	\]
in the summation $S_3$. Write $h_{a_l}=z+h$, with $0\neq z\in \fh_X^\perp$ (since the roots occurring in ${\bar y}_{\max}$ are in $\dot{\D}(\fu)$) and $h\in \fh_X$. Since $z\neq 0$ and $e\ll 0$, it follows by Lemma~\ref{lem:tec.3} that $w\neq 0$. Also from Lemma~\ref{lem:tec.3} we get that $w$ does not lie in
	\[
\LinSpan\left( \sum_{\substack{1\leq l\leq s, \ j\neq l \\ a_j=a_l}} (-1)^{S_{1_j}}  c_j {\bar y}_{\max}^{\hat j} h_{a_l} (b_j+e) x_{\max}\right)+ \bU(\cU^-)_{(c-2)}\otimes N.
	\]
It is also clear that 
	\[
w\notin\LinSpan (S_1)+\LinSpan(S_2)+ \bU(\cU^-)_{(c-2)}\otimes N
	\]
as $S_1$ and $S_2$ have factors of the form $y_{a_j}(e+2b_j)$ or $y_{a_\xi}(b_j+b_\xi+e)$ on them while $w$ does not. 

Let us now consider some index $(\bi, \bm, \bp, j)\neq (\ba, \bb, \bc, d)$ and ${\bar y}=y_{\bi, \bm, \bp}$ and ${\bar x}=x_j$. Hence, Lemma~\ref{lem:tec.2} gives us that
\begin{align*}
y_{-a_l}(e){\bar y}x & \equiv  \sum_{j\in J_-} (-1)^{S_{1_j}} p_j[y_{-a_l}, y_{i_j}](m_j+e){\bar y}^{j}x \\
& +  \sum_{J^c} (-1)^{S_{1_j}}  p_j {\bar y}^{j} [y_{a_l}, y_{i_j}](m_j+e) x \\ 
& +  \sum_{1\leq j\leq r} (-1)^{S_{1_j}S_{2_j}} \frac{p_j(p_j-1)}{2}[y_{i_j},[y_{-a_l}, y_{i_j}]](2m_j+e){\bar y}^{{\hat j}{\hat j}}x \\
& + \sum_{j\in J_-^l\setminus \{1\}} (-1)^{S_{1_j}S_{2_j}S_{3_j}} \sum_{\eta=1}^{j-1} p_\eta p_j [y_{i_\eta},[y_{-a_l}, y_{i_j}]](e+m_j+m_\eta){\bar y}^{\hat{j}\hat{\eta}}x \\
& + \sum_{j\in J^c \setminus \{r\}} (-1)^{S_{1_j}S_{2_j}S_{3_j}} \sum_{\eta=j+1}^{r} [[y_{-a_l}, y_{i_j}],y_{i_\eta}] \\
& \cdot (e+m_j+m_\eta){\bar y}^{\hat{j}\hat{\eta}}x \quad \mod \bU(\cU^-)_{(p-2)}\otimes N,
\end{align*}
where $p=\deg {\bar y}$. Let $T_i$ with $1\leq i\leq 5$ be the summation of i-th line above. We claim that 
	\[
w\notin \sum_{1\leq i\leq 5}\LinSpan(T_i) + \bU(\cU^-)_{(p-1)}\otimes N.
	\]
Indeed, if $c'>p$, then $T_i$ for $2\leq i\leq 5$ have degree $p-1$ (and hence at most $c'-2$). In particular,
	\[
w\notin \sum_{2\leq i\leq 5}\LinSpan(T_i) + \bU(\cU^-)_{(p-1)}\otimes N.
	\]
On the other hand, $w\notin \LinSpan(T_1) + \bU(\cU^-)_{(p-1)}\otimes N$, by Lemma~\ref{lem:tec.3}. Thus the claim is proved if $c'>p$.

Finally assume that $p=c'$. Thus $y_{\bi,\bm,\bp}$ has maximal degree, and therefore, each  of its component $y_{i_j}$ must lie in a root space associated to a simple root. In particular, $J_-=\emptyset$, which implies $T_1=T_4=0$. Also, since $\alpha_{i_j}$ (here we are assuming that $y_{i_j}\in \g_{\alpha_{i_j}}$) is simple, we get that
	\[
[y_{-a_l}, y_{i_j}]=\delta_{i_j, a_l}h_{a_l},
	\]
which yields
	\[
[y_{i_\eta}, [y_{-a_l}, y_{i_j}]](e+m_j+m_\eta)=\delta_{i_j, a_l}\alpha_{i_\eta}(h_{a_l}) y_{i_\eta}.
	\]
Thus, $T_i$ for $i=3,4,5$, does not contain the factor $h_{a_l}(b_l+2)$ on it, and hence
	\[
w\notin \sum_{\substack{i=1 \\ i\neq 2}}^5\LinSpan(T_i) + \bU(\cU^-)_{(p-1)}\otimes N.
	\]
This shows that, in order for 
	\[
w\in \sum_{i=1}^5\LinSpan(T_i) + \bU(\cU^-)_{(p-1)}\otimes N,
	\]
we must have
	\[
{\bar y}^{\hat j}={\bar y}_{\max}^{\hat l}\text{ and } [y_{a_l}, y_{i_j}]=h_{a_l} \Rightarrow m_j=b_l,\quad  x=x_{\max}
	\]
which implies ${\bar y}={\bar y}_{\max}$ and therefore $(\bi, \bm, \bp, j)= (\ba, \bb, \bc, d)$. But this is a contradiction.
}
\end{proof}

The next theorem gives a criterion of simplicity for the generalized Verma type module $M_\cP(N)$ with a nonzero central charge, which  is the main result of this paper.

\begin{theo}\label{thm:main}
Suppose that $\lambda(K)\neq 0$, and let $N$ be a subquotient of $M_\cM(\lambda)$. Then $M_\cP(N)$ is a simple $\cG$-module if and only if $N$ is a simple $\cM$-module. 
\end{theo}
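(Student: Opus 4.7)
The plan is to deduce the theorem cleanly from the separation statement in Proposition~\ref{prop:inductive.argument} together with the standard PBW argument for the converse. I will prove both implications in turn.

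For the ``only if'' direction, I will argue by contrapositive. Assume $N$ is not simple as an $\cM$-module, so there exists a proper nonzero $\cM$-submodule $N'\subsetneq N$. Since $\cU^+$ acts trivially on all of $N$ by the definition of $M_\cP(N)$, it acts trivially on $N'$ as well, so $N'$ is automatically a $\cP$-submodule. The PBW decomposition $\bU(\cG)\cong \bU(\cU^-)\otimes\bU(\cP)$ makes both $M_\cP(N)$ and $M_\cP(N')$ into free $\bU(\cU^-)$-modules on $N$ and $N'$ respectively. Consequently the natural map $M_\cP(N')\to M_\cP(N)$ is injective, with image $\bU(\cU^-)\otimes N'\subsetneq \bU(\cU^-)\otimes N\cong M_\cP(N)$, which is a proper nonzero $\cG$-submodule, contradicting simplicity.

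For the ``if'' direction, assume $N$ is a simple $\cM$-module. Since $N$ is a subquotient of $M_\cM(\lambda)$, all of its weights lie in $\lambda - Q(\cM^+)$; using the explicit description of $N$ supplied by Corollary~\ref{cor:inducing.from.K.to.M}, we may choose a weight basis $B$ of $N$ ordered so that condition~\eqref{cond.basis} holds. Let $V\subseteq M_\cP(N)$ be any nonzero $\cG$-submodule. By Proposition~\ref{prop:inductive.argument}, the intersection $V\cap (1\otimes N)$ is nonzero. Since $1\otimes N$ is $\cM$-stable and identifies $\cM$-equivariantly with $N$, this intersection is a nonzero $\cM$-submodule of $N$, hence equals all of $1\otimes N$ by simplicity. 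But $M_\cP(N)=\bU(\cG)\cdot(1\otimes N)$, so $V\supseteq M_\cP(N)$, forcing $V=M_\cP(N)$.

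The bulk of the difficulty has already been absorbed into Proposition~\ref{prop:inductive.argument}, which rests on the technical Lemmas~\ref{lem:tec.1}--\ref{lem:tec.3}; once that separation result is granted, the theorem follows with essentially no additional work. The only minor point to track is the bookkeeping needed to fit a simple subquotient $N$ into the hypotheses of Proposition~\ref{prop:inductive.argument} (namely producing a weight basis compatible with \eqref{cond.basis}), but this is immediate from the fact that $N$ is a weight module whose weight set is partially ordered by $\lambda-Q(\cM^+)$, and the forward direction of the PBW argument above requires no new input beyond standard facts.
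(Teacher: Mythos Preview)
Your proof is correct and follows essentially the same route as the paper: the forward direction is the standard PBW embedding $M_\cP(N')\hookrightarrow M_\cP(N)$, and the converse is a direct application of Proposition~\ref{prop:inductive.argument} to conclude that any nonzero submodule meets $1\otimes N$, after which simplicity of $N$ finishes the argument. If anything, your version is slightly cleaner---the paper's phrasing ``apply Proposition~\ref{prop:inductive.argument} inductively'' is redundant, since the proposition as stated already yields the intersection with $1\otimes N$ in one stroke---and you are a bit more explicit than the paper about verifying the basis hypothesis~\eqref{cond.basis} via Corollary~\ref{cor:inducing.from.K.to.M}.
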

\begin{proof}
If $N'\subseteq N$ is a proper $\cM$-submodule of $N$, then it is standard to see that $M_\cP(N')$ is a proper $\cG$-submodule of $M_\cP(N)$.

Conversely, notice that the degree zero component of $M_\cP(N)\cong \bU(\cU^-)\otimes N$ with respect to the PBW filtration induced by $\bU(\cU^-)$ is given by $N$. In particular, $M_\cP(N)_{\lambda-\beta}\neq N_{\lambda-\beta}$ if and only if $\beta\in Q(\cN^+)\setminus Q(\cM^+)$.  Let $U$ be a submodule of $M_\cP(N)$. Then $U$ is a weight module, and $U\neq 0$ implies that $U_{\lambda-\beta}\neq 0$, for some $\beta\in Q(\cN^+)$. If $\beta\in Q(\cM^+)$, then 
	\[
U_{\lambda-\beta}\subseteq M_\cP(N)_{\lambda-\beta} = N.
	\] 
Since $N$ is simple, this implies that $U=M_\cP(N)$. On the other hand, if $\beta\in Q(\cN^+)\setminus Q(\cM^+)$, then one can apply Proposition~\ref{prop:inductive.argument} inductively to obtain $U_{\lambda-\gamma}\neq 0$, for some $\gamma\in Q(\cM^+)$. Thus we are back to the previous case and the result is proved.
\end{proof}

Theorem~\ref{thm:main} gives us some corollaries for the non-isotropic case.

\begin{cor}\label{cor-main}
Let $\cB$ be the Borel subalgebra associated to $P(\dot{\S}, X)$. Then the $\cG$-module $M_\cB(\lambda)$ is simple if and only if $\lambda(K)\neq 0$ and $M_\cK(\lambda)$ is a simple $\cK$-module.
\end{cor}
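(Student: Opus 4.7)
The plan is to reduce Corollary~\ref{cor-main} to Theorem~\ref{thm:main} via induction transitivity, and then translate simplicity of $M_\cM(\lambda)$ into simplicity of $M_\cK(\lambda)$ together with $\lambda(K)\neq 0$. Since $\cB=\cH\oplus\cM^+\oplus\cU^+$ sits inside $\cP=\cM\oplus\cU^+$, induction transitivity gives $M_\cB(\lambda)\cong \Ind^\cG_\cP(\Ind^\cP_\cB \C_\lambda)$. Because $\cU^+$ is an ideal of $\cP$ (which follows from the easy weight containment $[\fm,\fu^+]\subseteq\fu^+$), a short PBW induction shows that $\cU^+$ annihilates every element of $\bU(\cM^-)\cdot 1_\lambda$, so $\Ind^\cP_\cB\C_\lambda$ coincides with $M_\cM(\lambda)$ equipped with the trivial $\cU^+$-action, and therefore $M_\cB(\lambda)\cong M_\cP(M_\cM(\lambda))$.

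Assuming $\lambda(K)\neq 0$, Theorem~\ref{thm:main} then equates simplicity of $M_\cB(\lambda)$ with simplicity of $M_\cM(\lambda)$ as an $\cM$-module (using also the standard fact that $M_\cP(N)$ simple forces $N$ simple, by exactness of $\Ind^\cG_\cP$). Since $[\cG^1,\fH_X]=0$ in the non-isotropic case, PBW gives $\bU(\cM^-)\cong \bU(\cK^-)\otimes\bU(\fH_X^-)$, and induction transitivity yields $M_\cM(\lambda)\cong \Ind^\cM_{\cK\oplus\fH_X^+}M_\cK(\lambda)$ with $\fH_X^+$ acting trivially on $M_\cK(\lambda)$. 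Proposition~\ref{prop:inducing.from.K.to.M} applied to $F=M_\cK(\lambda)$ then delivers the forward direction ``$M_\cK(\lambda)$ simple $\Rightarrow M_\cM(\lambda)$ simple''; conversely, any proper $\cK$-submodule $N'\subsetneq M_\cK(\lambda)$ lifts, by exactness of induction, to the proper $\cM$-submodule $\Ind^\cM_{\cK\oplus\fH_X^+}N'\subsetneq M_\cM(\lambda)$.

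The remaining point, which I expect to be the main obstacle, is to show that $\lambda(K)\neq 0$ is necessary. The plan is to exhibit a proper $\cM$-submodule of $M_\cM(\lambda)$ whenever $\lambda(K)=0$, which by the preceding reductions contradicts simplicity of $M_\cB(\lambda)$. The cyclic $\fH_X$-submodule $\bU(\fH_X^-)\cdot 1_\lambda\subseteq M_\cM(\lambda)$ is canonically isomorphic to $M_{\fH_X}(\lambda|_{\C K\oplus\fH_X^+})$, which by Lemma~\ref{lem:irre.of.Heisenberg.modules} is reducible precisely when $\lambda(K)=0$. At zero central charge, $\fH_X$ becomes abelian and $\fH_X^+$ acts as zero, so one can choose a proper nonzero $\fH_X$-submodule $W$ that is also $d$-invariant (for instance, the augmentation ideal $W=\fH_X^-\cdot \bU(\fH_X^-)\cdot 1_\lambda$). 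The candidate is $V:=\bU(\cK^-)\cdot W$, which is proper in $M_\cM(\lambda)$ by the PBW decomposition above. The technical heart of the argument is verifying $\cM$-invariance of $V$: using $[\cG^1,\fH_X]=0$, the triviality of $\fH_X^+$ on $W$, and the scalar action of $\fh\oplus \C K$, every case collapses to an easy check except the action of $d\in\cH$, which is precisely handled by the $d$-invariance of the chosen $W$.
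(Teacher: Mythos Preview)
Your proof is correct and follows essentially the same route as the paper: identify $M_\cB(\lambda)\cong M_\cP(M_\cM(\lambda))$ by transitivity of induction, invoke Theorem~\ref{thm:main} and Proposition~\ref{prop:inducing.from.K.to.M} for the equivalence with simplicity of $M_\cK(\lambda)$ when $\lambda(K)\neq 0$, and exhibit a proper $\cM$-submodule of $M_\cM(\lambda)$ when $\lambda(K)=0$. The only cosmetic difference is in this last step: the paper takes the $\cM$-submodule generated by $h(-m)\cdot M_\cK(\lambda)$ for a single $h\in\fh_c$ and $m>0$, whereas you use the (larger) image of the augmentation ideal $\fH_X^-\bU(\fH_X^-)\cdot 1_\lambda$; your explicit check of $d$-invariance is a nice touch that the paper leaves implicit, but both constructions rest on the same fact that $\fH_X$ acts centrally on $M_\cM(\lambda)$ once $\lambda(K)=0$.
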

\begin{proof}
Recall that $\cB= \cH\oplus \cN^+ = (\cH\oplus \cM^+) \oplus  \cU^+$, and that $\cP=\cM\oplus \cU^+$. Then
	\[
M_\cP(M_\cM(\lambda)) = \Ind_{\cP}^{\cG} (\Ind_{(\cH\oplus \cM^+) \oplus  \cU^+}^\cM \C_\lambda) = \Ind_{\cB}^{\cG} \C_\lambda = M_\cB(\lambda).
	\]

Now recall that $\cM=\fH_X^-\oplus \cK\oplus\fH_X^+$ and $[\fH_X, \cK^\pm]=0$. Hence, if $\lambda(K)=0$, then, for any $h\in \fh_X^\perp$, we have that $h(-m)\cdot \cM_\cK(\lambda)$ generates a proper $\cM$-submodule of $M_\cM(\lambda)$. Thus $M_\cM(\lambda)$ is not a simple $\cM$-module and one implication follows from Theorem~\ref{thm:main}. Conversely, if $M_\cK(\lambda)$ is a simple $\cK$-module and $\lambda(K)\neq 0$, then it follows from Proposition~\ref{prop:inducing.from.K.to.M} that $M_\cM(\lambda)$ is a simple $\cM$-module. Thus the result follows from Theorem~\ref{thm:main}.
\end{proof}

In particular, for $X=\emptyset$ we immediately have

\begin{cor}\label{cor-empty}
Suppose that $X=\emptyset$. Then $M_\cB(\lambda)$ is simple if and only if $\lambda(K)\neq 0$.
\end{cor}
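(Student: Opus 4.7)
The plan is to invoke Corollary~\ref{cor-main} and observe that the ``$M_\cK(\lambda)$ simple'' condition degenerates when $X=\emptyset$. First I would check that the non-isotropic hypothesis of Section~\ref{sec:main.sec} is automatically satisfied, since an empty set $X$ trivially has no connected component consisting of a single isotropic odd root. Thus Corollary~\ref{cor-main} applies and says: $M_\cB(\lambda)$ is simple if and only if $\lambda(K)\neq 0$ and $M_\cK(\lambda)$ is a simple $\cK$-module.

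Next I would compute $\cK$ explicitly under $X=\emptyset$. From the definitions preceding Proposition~\ref{prop:structure.MB(lambda)}, we have $\fm^\pm=0$ and $\fk_X=\fm^-\oplus\fh_X\oplus\fm^+=0$, so $L(\fk_X)=0$. Moreover $\fh_X=0$, giving $\fh_X^\perp=\fh$. Hence
\[
\cK=\cK_X=(L(\fk_X)\oplus\C K\oplus\C d)\oplus\fh_X^\perp=\C K\oplus\C d\oplus\fh=\cH,
\]
and similarly $\cK^+=(\fk_X\otimes t\C[t])\oplus\fm^+=0$. Therefore
\[
M_\cK(\lambda)=\Ind_{\cK^+\oplus\cH}^{\cK}\C_\lambda=\Ind_{\cH}^{\cH}\C_\lambda=\C_\lambda,
\]
which is one-dimensional and hence automatically a simple $\cK$-module for every $\lambda\in\cH^*$. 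Feeding this back into Corollary~\ref{cor-main}, the simplicity criterion collapses to the single condition $\lambda(K)\neq 0$, as desired. There is no serious obstacle here since the work has already been done in Corollary~\ref{cor-main}; the only thing to verify is the trivial degeneration of $\cK$ to $\cH$ when $X$ is empty.
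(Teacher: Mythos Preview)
Your argument is correct and essentially parallel to the paper's. The paper phrases the degeneration slightly differently---observing that when $X=\emptyset$ the subalgebra $\cM$ reduces to the Heisenberg algebra $\fH$, so that $M_\cM(\lambda)$ is simple iff $\lambda(K)\neq 0$ by Lemma~\ref{lem:irre.of.Heisenberg.modules}---rather than computing $\cK=\cH$ and invoking Corollary~\ref{cor-main}; but both routes are the same collapse viewed from the $\cM$ side versus the $\cK$ side, and both rest on the machinery of Theorem~\ref{thm:main}.
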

\begin{proof}
If $X=\emptyset$, then $\cM$ equals to the Heisenberg Lie algebra $\fH$. Now the result follows from Lemma~\ref{lem:irre.of.Heisenberg.modules}.
\end{proof}

\subsection{The cases: $\fsl(n,n)^{\widehat{}}$ and $\fpsl(n,n)^{\widehat{}}$}

In this section, we fix the following notation
	\[
\cG_{gl} = \fgl(n,n)^{\widehat{ }},\quad \cG_{sl} = \fsl(n,n)^{\widehat{ }}\quad \text{and}\quad \cG_{psl} = \fpsl(n,n)^{\widehat{ }}.
	\]
we also denote the subalgebras $\cH$, $\cB$, $\cK$, $\cM$, $\cU$, $\cP$ of $\cG_{gl}$ by $\cH_{gl}$, $\cB_{gl}$, $\cK_{gl}$, $\cM_{gl}$, $\cU_{gl}$, $\cP_{gl}$, respectively. In particular, these subalgebras induce (via intersection) subalgebras $\cH_{sl}$, $\cB_{sl}$, $\cK_{sl}$, $\cM_{sl}$, $\cU_{sl}$, $\cP_{sl}$ of $\cG_{sl}$. Notice that
\begin{gather*}
\cH_{gl} = \cH_{sl}\oplus \C e_{11},\quad \cB_{gl} = \cB_{sl}\oplus \C e_{11},\quad \cK_{gl} = \cK_{sl} \oplus \C e_{11}, \\
\cM_{gl} = \cM_{sl}\oplus L(\C e_{11})\quad \text{and}\quad  \cU_{gl} = \cU_{sl},
\end{gather*}
where $e_{11}$ denotes the elementary matrix with $1$ at the position $(1,1)$ and zeros elsewhere. Similarly, the subalgebras $\cH_{sl}$, $\cB_{sl}$, $\cK_{sl}$, $\cM_{sl}$, $\cU_{sl}$, $\cP_{sl}$ of $\cG_{sl}$ induce (via the canonical projection $\pi:\cG_{sl}\to \cG_{psl}$) subalgebras $\cH_{psl}$, $\cB_{psl}$, $\cK_{psl}$, $\cM_{psl}$, $\cU_{psl}$, $\cP_{psl}$ of $\cG_{psl}$. In particular, the only difference among these Lie subalgebras is their Cartan subalgebra.

Next we notice that any $\cG_{gl}$-module $M$ can be regarded as an $\cG_{sl}$-module via restriction. Conversely, let $M_{\cB_{sl}}(\lambda)$ (resp. $M_{\cM_{sl}}(F_{\cK_{sl}})$) be a Verma  (res. generalized standard Verma) type $\cG_{sl}$-module, where $F_{\cK_{sl}} = L_{\cK_{sl}}(\nu)$ is a subquotient of some standard Verma $\cK_{sl}$-module. Then we consider the Verma type module $M_{\cB_{gl}}(\lambda')$, where $\lambda'\in \cH_{gl}$ is any extension of $\lambda$, and the generalized standard Verma module $M_{\cM_{gl}}(F_{\cK_{gl}})$-module, where $F_{\cK_{gl}} = L_{\cK_{gl}}(\nu')$, is a subquotient of a standard Verma $\cK_{gl}$-module with $\nu'$ extending $\nu$.

\begin{theo}\label{thm:main.sl}
Suppose that $\lambda(K)\neq 0$, and let $N$ be a subquotient of $\cM_{\cM_{sl}}(\lambda)$. Then $M_{\cP_{sl}}(N)$ is a simple $\cG_{sl}$-module if and only if $N$ is a simple $\cM_{sl}$-module. 
\end{theo}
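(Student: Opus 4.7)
The strategy is to reduce to Theorem~\ref{thm:main} applied to $\cG_{gl} = \fgl(n,n)^{\widehat{\ }}$, since $\fgl(n,n)$ (with no constraint $m\neq n$) lies in the list of admissible Lie superalgebras covered there. The essential structural facts are: $\cG_{gl} = \cG_{sl} \oplus L(\C e_{11})$ as a vector space; $\cG_{sl}$ is an ideal in $\cG_{gl}$ (because all commutators in $\fgl(n,n)$ have supertrace zero, hence lie in $\fsl(n,n)$); and $\cU_{gl}^{\pm} = \cU_{sl}^{\pm}$. Analogous statements hold for $\cM_{sl}\subset \cM_{gl}$ and $\cP_{sl}\subset \cP_{gl}$, so in particular $\cM_{sl}$ is an ideal in $\cM_{gl}$ with abelian quotient (modulo $K$). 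This is the crucial reason that the $\fsl(n,n)$ case was excluded from Theorem~\ref{thm:main}: the bilinear form $(\cdot|\cdot)$ on $\fh_{sl}$ is degenerate, so one cannot choose $\fh_c$ orthogonal to $\fh_X$; passing to $\fgl(n,n)$ restores non-degeneracy.

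First I would extend $\lambda \in \cH_{sl}^*$ to $\lambda' \in \cH_{gl}^*$ (set $\lambda'(e_{11}) = 0$ for concreteness), obtaining $\lambda'(K) = \lambda(K) \neq 0$. By Corollary~\ref{cor:inducing.from.K.to.M}, any simple subquotient of $M_{\cM_{sl}}(\lambda)$ has the form $N = \Ind^{\cM_{sl}}_{\cK_{sl}\oplus \fH_{X,sl}^+} L_{\cK_{sl}}(\mu)$ for some $\mu \in \lambda + Q(\cM_{sl})$. Lifting $\mu$ to $\mu' \in \cH_{gl}^*$ and performing the analogous construction inside $\cG_{gl}$ produces a simple subquotient $\tilde N = \Ind^{\cM_{gl}}_{\cK_{gl}\oplus \fH_{X,gl}^+} L_{\cK_{gl}}(\mu')$ of $M_{\cM_{gl}}(\lambda')$ that lifts $N$.

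The core of the argument is to establish the following equivalences: $\tilde N$ is a simple $\cM_{gl}$-module if and only if $N$ is a simple $\cM_{sl}$-module, and $M_{\cP_{gl}}(\tilde N)$ is a simple $\cG_{gl}$-module if and only if $M_{\cP_{sl}}(N)$ is a simple $\cG_{sl}$-module. The mechanism is that $\tilde N$ restricted to $\cM_{sl}$ is (up to isomorphism) of the form $N \otimes F$, where $F$ is a Fock-type module over the Heisenberg-like subalgebra spanned by $\{e_{11}(k):k\in\Z\}$ and $K$. Since $\lambda'(K) \neq 0$, the module $F$ is simple by Lemma~\ref{lem:irre.of.Heisenberg.modules}. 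Combined with the PBW identifications $M_{\cP_{gl}}(\tilde N) \cong \bU(\cU^-) \otimes \tilde N$ and $M_{\cP_{sl}}(N) \cong \bU(\cU^-) \otimes N$ (using $\cU^\pm := \cU_{gl}^\pm = \cU_{sl}^\pm$), the desired equivalences follow.

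Applying Theorem~\ref{thm:main} to $\cG_{gl}$ then yields the conclusion: $M_{\cP_{sl}}(N)$ is simple iff $M_{\cP_{gl}}(\tilde N)$ is simple iff $\tilde N$ is simple iff $N$ is simple. The main technical obstacle is the precise formulation of the decomposition $\tilde N \cong N \otimes F$ as $\cM_{sl}$-modules, which is delicate because $L(\C e_{11})$ does not commute with $\cM_{sl}$: for $x \in \g_\alpha \cap \fm_{sl}$ we have $[e_{11}(m),x(n)] = \alpha(e_{11})\,x(m+n)$, which is nonzero in general. However, $\cM_{sl}$ being an ideal in $\cM_{gl}$ with abelian quotient means the $L(\C e_{11})$-action on $\tilde N$ can be analyzed weight space by weight space, and the non-vanishing of $\lambda'(K)$ ensures that the relevant Heisenberg action is free in the negative direction, thereby producing the required tensor factorization.
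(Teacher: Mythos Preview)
Your overall strategy matches the paper's: lift $N$ to a simple $\cM_{gl}$-subquotient $\tilde N$ of $M_{\cM_{gl}}(\lambda')$, apply Theorem~\ref{thm:main} for $\cG_{gl}$, and descend. The gap is in the descent mechanism. The tensor decomposition $\tilde N|_{\cM_{sl}}\cong N\otimes F$ (and the analogous one for $M_{\cP_{gl}}(\tilde N)$) that you propose does \emph{not} hold as a decomposition of $\cM_{sl}$-modules. You correctly identify the obstruction---$[e_{11}(m),x(n)]=\alpha(e_{11})\,x(m+n)\neq 0$ for $x\in\g_\alpha\cap\fm_{sl}$---but the closing paragraph does not overcome it. Concretely, for $x\in\fm^\pm$ the commutator moves $t$-degrees, so the $\cM_{sl}$-submodule generated by $u\cdot 1_{\mu'}$ with $u\in\bU(e_{11}\otimes t^{-1}\C[t^{-1}])$ is not $u\cdot N$; it leaks into other ``copies''. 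There is no element of $\fh_{sl}$ one can subtract from $e_{11}$ to make it central (the center of $\fgl(n,n)$ is $\C I$, and $e_{11}-cI$ never lies in $\fsl(n,n)$), so this cannot be repaired by a change of splitting. The fact that $\cM_{sl}$ is an ideal with abelian quotient does not yield a tensor factorisation of modules; it only says that $\cM_{gl}/\cM_{sl}$ is a Lie algebra, which is not what is needed.

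The paper bypasses this entirely. It only needs the implication ``$\tilde N$ simple $\Rightarrow$ $M_{\cP_{sl}}(N)$ simple'' (the converse of the theorem is trivial), and for this it uses the embedding $M_{\cP_{sl}}(N)\hookrightarrow M_{\cP_{gl}}(\tilde N)$ together with the observation that the weight-raising operators produced by the inductive argument of Proposition~\ref{prop:inductive.argument} for $\cG_{gl}$ already lie in $\bU(\cN_{sl}^+)$: they are built from root vectors and from the $h_{\alpha}=[z_\alpha,z_{-\alpha}]$, all of which sit inside $\fsl(n,n)$. Hence any nonzero $\cG_{sl}$-submodule of $M_{\cP_{sl}}(N)$ can be pushed by elements of $\bU(\cG_{sl})$ into $N$, and simplicity of $N$ finishes the argument. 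This avoids any need to compare $\tilde N$ and $N$ as $\cM_{sl}$-modules. If you want to repair your write-up, replace the tensor-factorisation step by this embedding-plus-raising-operators argument; everything else in your outline (extending $\lambda$, showing $\tilde N$ is simple via Proposition~\ref{prop:inducing.from.K.to.M}, invoking Theorem~\ref{thm:main} for $\cG_{gl}$) is correct and matches the paper.
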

\begin{proof}
Let $N_{\cM_{sl}}$ be a simple subquotient of $\cM_{\cM_{sl}}(\lambda)$. Then, by Proposition~\ref{prop:inducing.from.K.to.M} (it can be checked that it still holds for $\cG_{sl}$), $N_{\cM_{sl}}\cong M_{\cM_{sl}}(F_{\cK_{sl}})$, where $F_{\cK_{sl}}$ is a simple subquotient of $\cM_{\cK_{sl}}(\lambda)$. It is clear that $F_{\cK_{gl}}$ is a simple $\cK_{gl}$-module. In particular, again by Proposition~\ref{prop:inducing.from.K.to.M}, $N_{\cM_{gl}}=M_{\cM_{gl}}(F_{\cK_{gl}})$ is simple, and hence, by Theorem~\ref{thm:main}, $M_{\cP_{gl}}(N_{\cM_{gl}})$ is a simple $\cG_{gl}$-module. Of course $M_{\cP_{sl}}(N_{\cM_{sl}})$ can be embedded into $M_{\cP_{gl}}(N_{\cM_{gl}})$. Then, for any element $m\in M_{\cP_{sl}}(N_{\cM_{sl}})\setminus N_{\cM_{sl}}$ there is $u\in\bU(\cM_{gl}^+\oplus \cU_{gl}^+)$ such that $0\neq um\in N_{\cM_{sl}}$. But notice that $\bU(\cM_{gl}^+\oplus \cU_{gl}^+) = \bU(\cM_{sl}^+\oplus \cU_{sl}^+)$. In other words, any $\cG_{sl}$-submodule of $M_{\cP_{sl}}(N_{\cM_{sl}})$ intersects $N_{\cM_{sl}}$ non-trivially. Since $N_{\cM_{sl}}$ is simple, it follows that $M_{\cP_{sl}}(N_{\cM_{sl}})$ is also simple, and the result follows.
\end{proof}

Any $\cG_{psl}$-module $W$ can be regarded as a $\cG_{sl}$-module through the canonical projection $\pi:\cG_{sl}\to \cG_{psl}$. We denote such a $\cG_{sl}$-module by $W^\pi$.

\begin{cor}
Suppose that $\lambda(K)\neq 0$, and let $N$ be a subquotient of $\cM_{\cM_{psl}}(\lambda)$. Then $M_{\cP_{psl}}(N)$ is a simple $\cG_{psl}$-module if and only if $N$ is a simple $\cM_{psl}$-module. 
\end{cor}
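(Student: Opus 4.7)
The plan is to mimic the strategy used in Theorem~\ref{thm:main.sl}, except that instead of lifting from $\cG_{sl}$ to $\cG_{gl}$, one pulls back along the canonical projection $\pi:\cG_{sl}\to\cG_{psl}$ and invokes Theorem~\ref{thm:main.sl} directly. First, I would observe that any $\cG_{psl}$-module $W$ becomes a $\cG_{sl}$-module $W^\pi$ on which $\ker\pi$ acts trivially; since $\pi$ is surjective, the $\cG_{sl}$-submodules of $W^\pi$ are in natural bijection with the $\cG_{psl}$-submodules of $W$, and exactly the same correspondence holds for $\cM_{psl}$- versus $\cM_{sl}$-modules. Hence simplicity over $\cG_{psl}$ (resp.\ over $\cM_{psl}$) is equivalent to simplicity of the pullback over $\cG_{sl}$ (resp.\ over $\cM_{sl}$).

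The heart of the argument will be the identification $M_{\cP_{psl}}(N)^\pi \cong M_{\cP_{sl}}(N^\pi)$ of $\cG_{sl}$-modules. Since the parabolic set at hand contains $\pm\delta$ in its symmetric part $P_0$, the nilradical $\cU^\pm$ consists only of root spaces $\cG_{\alpha+n\delta}$ with $\alpha\in\dot\D^+(\dot{\S})\setminus\dot\D^+(X)$; none of these meet $\ker\pi = \C h_0\otimes\C[t,t^{-1}]$, because $h_0$ is central in $\fsl(n,n)$ and hence lies in the zero weight space. Consequently $\pi$ restricts to an isomorphism $\cU_{sl}^\pm \cong \cU_{psl}^\pm$, and the PBW factorization yields a canonical vector-space isomorphism $\bU(\cU_{sl}^-)\otimes N^\pi \cong \bU(\cU_{psl}^-)\otimes N$. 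Since $\ker\pi$ lies in the radical of the invariant bilinear form on $\fsl(n,n)$, it is in fact central in $\cG_{sl}$ and annihilates $N^\pi$, so the two $\cG_{sl}$-actions on this common vector space agree.

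An analogous projection argument will realize $N^\pi$ as a subquotient of $M_{\cM_{sl}}(\lambda^\pi)$, where $\lambda^\pi := \lambda\circ\pi|_{\cH_{sl}}$ still satisfies $\lambda^\pi(K)=\lambda(K)\neq 0$: concretely, $M_{\cM_{sl}}(\lambda^\pi)$ projects onto $M_{\cM_{psl}}(\lambda)^\pi$ by killing the (central) actions of the loop elements $h_0\otimes t^{-k}$. Applying Theorem~\ref{thm:main.sl} to $N^\pi$ will then give that $M_{\cP_{sl}}(N^\pi)$ is simple over $\cG_{sl}$ if and only if $N^\pi$ is simple over $\cM_{sl}$, and chaining this with the two simplicity correspondences from the first paragraph yields the stated equivalence. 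The main point to be careful about is the treatment of the central loop elements $h_0\otimes t^k$ in $\ker\pi$; however, since they act trivially on every module pulled back from $\cG_{psl}$ and are central in $\cG_{sl}$, each identification above passes through without further obstacle.
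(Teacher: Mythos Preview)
Your approach is correct and matches the paper's: pull back along $\pi$, use that $\cU_{sl}^\pm\cong\cU_{psl}^\pm$ to identify $M_{\cP_{psl}}(N)^\pi\cong M_{\cP_{sl}}(N^\pi)$, and invoke Theorem~\ref{thm:main.sl}. One small inaccuracy: $\ker\pi=h_0\otimes\C[t,t^{-1}]$ is not literally central in $\cG_{sl}$ because $[d,h_0(k)]=-k\,h_0(k)$, but what you actually need---and what holds---is that $\ker\pi$ commutes with all of $L(\fsl(n,n))\oplus\C K$ (since $(h_0\mid y)=\mathrm{str}(y)=0$ for $y\in\fsl(n,n)$), which suffices for every step of your argument.
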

\begin{proof}
Let $N_{\cM_{psl}}$ be a simple subquotient of $\cM_{\cM_{psl}}(\lambda)$. Then $N_{\cM_{psl}}^\pi$ is a simple subquotient of $\cM_{\cM_{sl}}(\lambda)^\pi$. In particular, $M_{\cP_{sl}}(N_{\cM_{psl}}^\pi)$ is a simple $\cG_{sl}$-module by Theorem~\ref{thm:main.sl}. Since $M_{\cP_{sl}}(N_{\cM_{psl}}^\pi) = M_{\cP_{psl}}(N_{\cM_{psl}})^\pi$, this implies $M_{\cP_{psl}}(N_{\cM_{psl}})$ is simple, as we wanted.
\end{proof}

\section{Isotropic case}
Recall that the subalgebra $\cG^1$ of the Levi component $\cG^0$, which is given in Theorem~\ref{thm:descr.of.G_0}, may be of the form
	\[
\cG^1=\bigoplus_{i=1}^m L(\frak{g}^i)\oplus \C K,
	\]
where $[\frak{g}^i,\frak{g}^j]=0$ if $i\neq j$, and some $\frak{g}^i$ is isomorphic to the nilpotent Lie superalgebra $\frak{sl}(1,1)$.  In the previous sections we studied some facts about Verma and generalized Verma type modules that are associated to parabolic sets for which $X$ does not contain a connected component consisting of one isotropic odd root, that is, none of the subalgebras $\g^i$ of $\cG^1$ is isomorphic to $\frak{sl}(1,1)$. In this section we consider precisely the case that 
\begin{equation*}
X=\{\alpha\},\text{ where }\alpha\in \dot{\S}\text{ is isotropic (i.e. } (\alpha | \alpha)=0).
\end{equation*} 
In particular, we have $\cG^1=L(\frak{sl}(1,1))\oplus \C K$.

Let $\frak{h}_c$ be any subalgebra of $\fh$ such that $\fh=\C h_\alpha\oplus \fh_c$. (Since $\alpha$ is isotropic, we always have that $(h_\alpha | \fh_c)\neq 0$, see Example~\ref{ex:par.sets.sl(1,2)}). Hence, if we set
	\[
\fH_X := \C K\bigoplus_{k\in \Z \setminus \{0\}} (\frak{h}_c\otimes t^k),
	\]
then
	\[
\cG^0=\fH_X+\cG^1,\quad [\fH_X, \cG^1]\neq 0\quad\text{and}\quad \cG^1 \cap \fH_X=\C K.
	\]
Moreover, as in the previous sections, we consider
	\[
\cK:=\cG^1+\cH=(L(\frak{sl}(1,1)) \oplus \C K)\oplus \fh_c \oplus \C d
	\]
and 
	\[
\cM:=\cG^0 + \cH=\cK \oplus \fH_X,
	\]	
with standard triangular decompositions
	\[
\cK = \cK^-\oplus \cH\oplus \cK^+,\quad \text{where}\quad \cK^\pm = \fsl(1,1)\otimes t^{\pm 1}\C[t^{\pm 1}] \oplus \C x_\alpha,
	\]
and
	\[
\cM=\cM^+\oplus \cH\oplus \cM^+,\quad \text{where } \cM^\pm = \fH_X^\pm\oplus \cK^\pm.
	\]

The next result shows that Theorem~\ref{thm:main} also holds for the isotropic case.

\begin{theo}\label{thm:main.isot.case}
With the above notation, suppose that $\lambda(K)\neq 0$, and let $N$ be a subquotient of $M_\cM(\lambda)$. Then $M_\cP(N)$ is a simple $\cG$-module if and only if $N$ is a simple $\cM$-module. 
\end{theo}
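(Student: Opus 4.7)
The plan is to mirror the proof of Theorem~\ref{thm:main}, adapting the key steps to the isotropic setting. The easy direction—that $M_\cP(N)$ simple forces $N$ simple—is standard: a proper $\cM$-submodule $N' \subsetneq N$ produces a proper $\cG$-submodule $\bU(\cG)\otimes_{\bU(\cP)} N'$ of $M_\cP(N)$.

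For the converse I would reduce, as in Theorem~\ref{thm:main}, to an analog of Proposition~\ref{prop:inductive.argument}: any nonzero $\cG$-submodule $U \subseteq M_\cP(N)$ meets the degree-zero piece $1 \otimes N$ nontrivially. Once this is in place, simplicity of $N$ forces $N \subseteq U$, and then $U = \bU(\cG) \cdot N = M_\cP(N)$. Lemmas~\ref{lem:tec.1} and~\ref{lem:tec.2} transfer verbatim because their proofs never invoke the $\cK$-module structure of $N$.

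The real obstacle is that Proposition~\ref{prop:inducing.from.K.to.M} no longer applies: since $[\fH_X,\cK]\neq 0$, the trivial extension of a simple $\cK$-module $F$ to $\cK\oplus \fH_X^+$ is inconsistent whenever $\lambda(K)\neq 0$ (one computes $[h(m),h_\alpha(-m)] = m(h|h_\alpha)K$, which acts by a nonzero central scalar). Consequently, a simple subquotient $N$ of $M_\cM(\lambda)$ no longer admits an obvious factorization $\bU(\fH_X^-)\otimes L_\cK(\mu)$. To work around this, I would refine the choice of $\fh_c$ symplectically: since $(h_\alpha|h_\alpha)=0$ and the form $(\cdot|\cdot)$ is nondegenerate on $\fh$ for the algebras considered here, pick $h^*\in\fh$ with $(h_\alpha|h^*)=1$, write $\fh = \C h_\alpha \oplus \C h^* \oplus \fh''$ with $\fh''$ the orthogonal complement, and set $\fh_c := \C h^* \oplus \fh''$. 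Then $\fH_X$ is still Heisenberg, now containing the hyperbolic pair $\{h^*(m),h_\alpha(-m)\}$ for every $m\neq 0$, and the form on $\fh''\otimes \C[t,t^{-1}]\oplus \C K$ is nondegenerate.

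The heart of the argument, and where I expect to do the most work, is proving a version of Lemma~\ref{lem:tec.3} that uses only an arbitrary weight basis $B=\{x_i\}$ of $N$ satisfying~\eqref{cond.basis}—such a basis exists because $N$ is a weight $\cM$-module with weights in $\lambda - Q(\cM^+)$. The key point is that although $h(e+m_r)$ for $h\in \fh_c$ no longer preserves a tensor factor, for $e$ sufficiently negative the commutators produced when rearranging monomials in $\bU(\cM^-)$ into PBW order involve only finitely many indices and strictly decrease the $\bU(\fH_X^-)$-filtration; their leading terms are either central, via the hyperbolic pair, or lie in $\bU(\fH_X^-)$ of smaller degree via the Heisenberg relations. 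Since $\lambda(K)\neq 0$ and $\gamma(K)=0$ for every $\gamma\in Q(\cM)$, these leading terms act by nonzero scalars on weight vectors of $N$, yielding the required linear independence. With the adapted Lemma~\ref{lem:tec.3}, the Cox-style inductive reduction in the proof of Proposition~\ref{prop:inductive.argument} transfers without further change and completes the theorem.
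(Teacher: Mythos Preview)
Your plan correctly isolates the obstruction—Proposition~\ref{prop:inducing.from.K.to.M} and hence Lemma~\ref{lem:tec.3} collapse because $[\fH_X,\cK]\neq 0$—but your proposed repair of Lemma~\ref{lem:tec.3} is where the argument breaks down. You invoke a ``$\bU(\fH_X^-)$-filtration'' on $N$ and speak of ``leading terms'' of commutators, but for an arbitrary subquotient $N$ of $M_\cM(\lambda)$ there is no canonical $\bU(\fH_X^-)$-module structure that makes this precise: $N$ need not be free over $\bU(\fH_X^-)$, and elements of $N$ cannot in general be written as PBW monomials in $\bU(\cM^-)$ applied to a single vector. The heuristic that central terms ``act by nonzero scalars on weight vectors'' is exactly the right intuition, but it does not by itself give the linear-independence statement that the Cox argument needs at the $w=\bar y_{\max}^{\hat l}\,h_{a_l}(b_l+e)\,x_{\max}$ step.

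The paper avoids this entirely by abandoning the Cox machinery in the isotropic case and arguing directly. It takes a nonzero weight vector $w=\sum_i Z_i\otimes u_i v$ in a proper submodule $W$, with $Z_i\in\bU(\cU^-)_{-\beta-l_i\alpha+k_i\delta}$, and inducts on $\hgt(\beta)$, using Lemma~\ref{lem:tec.1} to lower height when $\hgt(\beta)>1$. In the base case $\beta$ is simple; repeated application of $x_\alpha(e)$ for $e\gg 0$ equalizes the $l_i$, and then hitting with $x(s)\in\g_{\beta+m\alpha}$ for $s\ll 0$ produces $\sum_j h(s+r_j)u_j v$ with $h\notin\C h_\alpha$. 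The decisive step is to apply $h(-s-r_{j_0})$ for the minimal $r_{j_0}$: the Heisenberg commutator singles out the $j_0$ term as a nonzero multiple of $\lambda(K)\,u_{j_0}v\in N\cap W$, contradicting properness. This uses only that $N$ has a highest weight vector and $\lambda(K)\neq 0$—no internal module structure on $N$ is needed, which is precisely what your adaptation of Lemma~\ref{lem:tec.3} was trying to manufacture.
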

\begin{proof}
The proof is similar to the non-isotropic case. Special care should be taken since we do not have an orthogonal decomposition for the Heisenberg subalgebra in the isotropic case.  
Assume that $M_\cP(N)$ is not simple, and let $W\subseteq M_\cP(N)$ be a proper submodule. Pick an arbitrary nonzero element $w\in W$. We can assume that $w$ is a weight element, and write
	\[
w = \sum_{i\in I} Z_i\otimes u_iv,
	\]
where $Z_i\in \bU(\cU^-)_{-\beta - l_i \alpha +k_i\delta}$ with  $\beta\in Q(\cU^-)$, $u_i\in \bU(\cM^-)$, $l_i, k_i$ integers, and $v\in N$ a highest weight vector.  We also assume that  all $Z_i$ are linearly independent and if  $\beta = \sum n_j\alpha_j $ is the decomposition of $\beta$ in simple roots then $\alpha_j\neq \alpha$ for all $j$ in this decomposition. 
Then set ${\rm ht}(w):={\rm ht}(\beta) = \sum n_j$.
 If  ${\rm ht}(\beta)>1$ then by Lemma~\ref{lem:tec.1} we find a nonzero weight element in $W$ of height ${\rm ht}(\beta)-1$. Hence we only need to consider the base of induction: ${\rm ht}(\beta)=1$. In this case $\beta$ is a simple root. If $l_i\neq l_j$ for some $i, j$ then choose a nonzero $x_{\alpha}\in \g_{\alpha}$ and  $e\gg 0$.  Since $x_{\alpha} (e)u_iv=0$ for all $i$, we get
 	\[
x_{\alpha} (e)w = \sum [x_{\alpha}, Z_i](e)\otimes u_iv.
	\]

Note that at least one of $[x_{\alpha}, Z_i](e)\in \bU(\cU^-)_{-\beta - (l_i-1) \alpha +(k_i+e)\delta}$ is not zero. Again, if coefficients by $\alpha$ in the remaining terms are not equal then choose $e'\gg e$ and apply  again $x_{\alpha} (e')$. After finitely many steps we will obtain a nonzero $w'\in W$,
	\[
w' = \sum_{j\in J} Z'_j\otimes u_jv
	\]
for some subset $J\subset I$, where $Z_j\in \bU(\cU^-)_{-\beta - m\alpha +r_j\delta}$ for some integer $m$, $r_j$, $j\in J$. Note that  the elements $u_j$, $j\in J$ are the same as in $w$.  Then for a nonzero 
  $x\in \g_{\beta+m\alpha}$ and $s\ll 0$, we get
	\[
x(s)w' = \sum_{j\in J} [x, Z'_i](s)\otimes u_j v = \sum_{j\in J} h(s+r_j)u_i v
	\]
for some $h\in \cH$ which is not multiple of $h_{\alpha}$. Note that $x(s)u_jv=0$ and  $s+r_j\ll 0$ by the choice of $s$   for all $j$. 	
We do not know at this point whether $x(s)w'\neq 0$.  Choose $j_0$ such that $r_{j_0}< r_j$ for all $j\in J$ and consider 
$$h(-s-r_{j_0})x(s)w' = \sum_{j\in J}h(-s-r_{j_0}) h(s+r_j)u_i v=\sum_{j\in J}[h(-s-r_{j_0}), h(s+r_j)]u_i v=\lambda(K) u_{j_0}v.$$ 
Since $\lambda(K)\neq 0$ we conclude that $u_{j_0}v\in W$. But   $u_{j_0}v\neq 0$, which gives a contradiction  since $W$ is proper and $N$ is simple.  The result now follows by induction.
\end{proof}

\begin{cor}\label{cor-main-iso-case}
Let $\cB$ be the Borel subalgebra associated to $P(\dot{\S}, X)$. Then the $\cG$-module $M_\cB(\lambda)$ is simple if and only if $\lambda(K)\neq 0$ and $M_\cM(\lambda)$ is a simple $\cM$-module.
\end{cor}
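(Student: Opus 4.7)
The plan is to imitate the proof of Corollary~\ref{cor-main}, replacing Theorem~\ref{thm:main} by Theorem~\ref{thm:main.isot.case} and replacing the Heisenberg-based reducibility argument (which relied on $[\fH_X,\cK^\pm]=0$) by a direct singular-vector construction exploiting the isotropy of $\alpha$. By induction in stages, $M_\cB(\lambda)=\Ind^\cG_\cP M_\cM(\lambda)=M_\cP(M_\cM(\lambda))$, so the ``if'' direction is immediate from Theorem~\ref{thm:main.isot.case} applied to $N=M_\cM(\lambda)$.

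For the ``only if'' direction I would split the two conclusions. That $M_\cM(\lambda)$ must be simple is the standard induction-from-a-parabolic argument: if $N\subsetneq M_\cM(\lambda)$ is a proper nonzero $\cM$-submodule, then by PBW we have $M_\cB(\lambda)\cong \bU(\cU^-)\otimes M_\cM(\lambda)$ and $\Ind^\cG_\cP N\cong \bU(\cU^-)\otimes N$ is a proper nonzero $\cG$-submodule, contradicting simplicity.

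The substantive step is to show $\lambda(K)\neq 0$ is necessary. Assume to the contrary that $\lambda(K)=0$. My plan is to exhibit the explicit singular vector
\[
v:=(h_\alpha\otimes t^{-1})\otimes 1_\lambda \in M_\cB(\lambda),
\]
which has weight $\lambda-\delta$ and is nonzero by PBW (since $h_\alpha\otimes t^{-1}\in \cK^-\subseteq \cN^-$). The verification that $\cN^+\cdot v=0$ decomposes into three cases according to the splitting $\cN^+=\fH_X^+\oplus \cK^+\oplus \cU^+$. The key bracket is, for $z\in \cG_\beta$ and $m\in\Z$,
\[
[z(m),h_\alpha(-1)]=[z,h_\alpha](m-1)+m\delta_{m,1}(z|h_\alpha)K,
\]
where $(z|h_\alpha)=0$ whenever $\beta\neq 0$ by weight considerations. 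For $z\in\fsl(1,1)$ the first term vanishes by the isotropy identities $\alpha(h_\alpha)=0$ and $(h_\alpha|h_\alpha)=0$; for $z\in \fu^+$ both $z(m)$ and $z(m-1)$ lie in $\cU^+\subseteq \cN^+$ and so annihilate $1_\lambda$; and for $h\in \fh_c$ the only surviving term is $m\delta_{m,1}(h|h_\alpha)\lambda(K)\cdot 1_\lambda$, which is zero under our assumption $\lambda(K)=0$. Once this is in hand, $v$ generates a proper $\cG$-submodule of $M_\cB(\lambda)$, the desired contradiction.

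The step I expect to be the main obstacle is this last one, since the Heisenberg reducibility argument from Corollary~\ref{cor-main} does not port directly: here $[\fH_X,\cK]\neq 0$, so the Heisenberg-Verma submodule $\bU(\fH_X^-)\otimes 1_\lambda$ of $M_\cM(\lambda)$ is no longer stable under the $\cM$-action. The trick is that the isotropic vector $h_\alpha\otimes t^{-1}$, which lives in $\cK^-$ rather than in $\fH_X^-$, plays the role of the Heisenberg singular vector, and the two isotropy relations $\alpha(h_\alpha)=(h_\alpha|h_\alpha)=0$ are precisely what allow it to be annihilated by the full positive part.
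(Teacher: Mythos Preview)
Your argument is correct and in fact more complete than the paper's own two-line proof, which simply cites Theorem~\ref{thm:main.isot.case} together with the isomorphism $M_\cP(M_\cM(\lambda))\cong M_\cB(\lambda)$ and does not explicitly address why $\lambda(K)=0$ forces reducibility. (By contrast, the non-isotropic analogue Corollary~\ref{cor-main} does treat this case separately.) Your singular-vector construction $v=h_\alpha(-1)\otimes 1_\lambda$ fills this gap cleanly: the isotropy identities $\alpha(h_\alpha)=0$ and $(h_\alpha\,|\,h_\alpha)=0$ make $h_\alpha(-1)$ commute with all of $L(\fsl(1,1))$, so $v$ is killed by $\cK^+$; it is killed by $\cU^+$ because $[z(m),h_\alpha(-1)]=-\beta(h_\alpha)z(m-1)$ again lies in $\cU^+$; and the only obstruction from $\fH_X^+$ is the central term $(h\,|\,h_\alpha)\lambda(K)$. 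This is exactly the right replacement for the Heisenberg argument of Corollary~\ref{cor-main}, which as you correctly diagnose does not port over since here $[\fH_X,\cK^\pm]\neq 0$.

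One small further observation: your vector $v$ already lies in $M_\cM(\lambda)$ and your computation shows it is singular there for $\cM^+$ as well. Hence you have actually proved that $M_\cM(\lambda)$ simple forces $\lambda(K)\neq 0$, so the hypothesis $\lambda(K)\neq 0$ in the corollary is subsumed by the simplicity of $M_\cM(\lambda)$. Read this way, the paper's terse proof becomes complete once one knows this fact, which your construction supplies.
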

\begin{proof}
This follows from Theorem~\ref{thm:main.isot.case} along with the fact that
	\[
M_\cP(M_\cM(\lambda)) \cong M_\cB(\lambda).
	\]
\end{proof}

\subsection{Natural Verma $\fsl(1,1)^{\widehat{ }}$-modules}
In this section we study some facts about natural Verma modules over the Lie superalgebra $\cK=(L(\frak{sl}(1,1)) \oplus \C K)\oplus \fh_c \oplus \C d$.

Notice that the root system of $\cK$ is $P_0=\{\pm \alpha \pm n \delta,\ \pm k \delta\mid n\in \Z_{\geq 0},\ k\in \Z_{>0}\}$. Then, a natural partition of $P_0$ is given by
	\[
P_{0,\nat} := \{\alpha\pm n\delta,\ k\delta\mid n\in\Z_{\geq 0},\ k\in\Z_{>0}\}.
	\]
As usual, let 
	\[
\cB_\nat = \cH\oplus \bigoplus_{\gamma\in P_{0,\nat}} \cK_\gamma= \cH\oplus  L(\C x_\alpha)\oplus (h_\alpha\otimes t \C[t]).		\]
Fix $\zeta\in \cH^*$, and let $M_{\cB_{\nat}}(\zeta)$ be the Verma module of $\cK$ associated to $\zeta$ and $\cB_{\nat}$. By PBW Theorem, 
	\[
M_{\cB_{\nat}}(\zeta)_\mu\neq 0 \text{ then } \mu\in \{\zeta - k \alpha +\ell\delta,\ \zeta - n\delta \mid k\in \Z_{>0},\  \ell \in \Z,\ n\in \Z_{\geq 0}\}.
	\]

\begin{rem}
Unlike the Lie algebra case, and also the other cases of $\g$ that we considered in previous sections (see Corollary~\ref{cor-empty}), the next result shows that the natural Verma module $M_{\cB_\nat}(\zeta)$ is not simple for any $\zeta\in \cH^*$:
\end{rem}

\begin{prop}\label{prop:sl(1,1)-Verma-not-irred}
$M_{\cB_{\nat}}(\zeta)$ is not simple for any $\zeta\in \cH^*$.
\end{prop}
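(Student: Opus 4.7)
The plan is to exhibit an explicit nonzero singular vector in $M_{\cB_\nat}(\zeta)$ whose weight differs from $\zeta$; the submodule it generates will then automatically be proper. The candidate is $v := h_\alpha(-1) \cdot 1_\zeta$, a nonzero vector by the PBW theorem, of $\cH$-weight $\zeta - \delta$. Let $\cN := L(\C x_\alpha) \oplus (h_\alpha \otimes t\C[t])$ denote the nilpotent part of $\cB_\nat$, so that $\cK = \cK^- \oplus \cH \oplus \cN$ with $\cK^- := L(\C y_\alpha) \oplus (h_\alpha \otimes t^{-1}\C[t^{-1}])$.

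I will first check that $v$ is annihilated by $\cN$. The decisive algebraic facts are that $\fsl(1,1)$ is nilpotent with one-dimensional center spanned by $h_\alpha$, and that $(h_\alpha \mid h_\alpha) = (\alpha \mid \alpha) = 0$ by isotropy of $\alpha$. Together with $(h_\alpha \mid x_\alpha) = (h_\alpha \mid y_\alpha) = 0$ (orthogonality of distinct root spaces), these imply that for every $n \in \Z$ and every $X \in \{x_\alpha, y_\alpha, h_\alpha\}$, both the finite-part bracket $[h_\alpha, X](n-1)$ and the central term $(-1)\delta_{-1,-n}(h_\alpha \mid X)K$ in $[h_\alpha(-1), X(n)]$ vanish. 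Hence $x_\alpha(n) \cdot v = h_\alpha(-1) x_\alpha(n) \cdot 1_\zeta = 0$ for all $n \in \Z$, and $h_\alpha(n) \cdot v = h_\alpha(-1) h_\alpha(n) \cdot 1_\zeta = 0$ for all $n \geq 1$, so $v$ is singular.

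Next, since $v$ is a weight vector annihilated by $\cN$, PBW will give $V := \bU(\cK) v = \bU(\cK^-) v$, and the weights of $V$ then lie in $\zeta - \delta - Q(\cN)$. To conclude that $V$ is proper it suffices to show $\zeta \notin \wt(V)$, equivalently $-\delta \notin Q(\cN)$. The monoid $Q(\cN)$ is generated by $\{\alpha + n\delta : n \in \Z\} \cup \{k\delta : k \geq 1\}$; matching $\alpha$-coefficients forces the generators $\alpha + n\delta$ to appear with multiplicity zero in any representation of $-\delta$, leaving $-\delta$ as a non-negative integer combination of $\{k\delta : k \geq 1\}$. Such combinations have non-negative $\delta$-coefficient, contradicting $-\delta$. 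Thus $V$ is a proper nonzero submodule and $M_{\cB_\nat}(\zeta)$ fails to be simple.

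The point to be careful about is the interplay between isotropy of $\alpha$ and the central extension: were $(h_\alpha \mid h_\alpha)$ nonzero, the bracket $[h_\alpha(1), h_\alpha(-1)] = (h_\alpha \mid h_\alpha)K$ would contribute $\zeta(K)(h_\alpha \mid h_\alpha)$ to $h_\alpha(1) \cdot v$, destroying singularity whenever $\zeta(K) \neq 0$. Isotropy of $\alpha$ is therefore what makes this natural Verma module structurally non-simple for \emph{every} $\zeta$, including those of nonzero central charge — in contrast with Corollary~\ref{cor-empty} for the non-isotropic case.
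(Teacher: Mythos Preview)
Your proof is correct and follows the same underlying idea as the paper's proof. The paper dispatches the claim in one line by observing that $\bU(h_\alpha\otimes t^{-1}\C[t^{-1}])$ lies in the center of $L(\fsl(1,1))$; your argument is a careful unpacking of exactly this fact, verifying explicitly that $h_\alpha(-1)$ commutes with $L(\C x_\alpha)$ and $h_\alpha\otimes t\C[t]$ (using $\alpha(h_\alpha)=0$ and $(h_\alpha\mid h_\alpha)=0$), and then tracking weights to see that the resulting singular vector generates a proper submodule. Your closing remark on the role of isotropy in killing the central term is a useful piece of commentary that the paper leaves implicit.
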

\begin{proof}
This follows from the fact that $\bU(h_\alpha\otimes t^{-1}\C[t^{-1}])$ is in the center of $L(\frak{sl}(1,1))$.
\end{proof}

Let $V(\zeta)$ denote the unique simple quotient of $M_{\cB_{\nat}}(\zeta)$. The next results contain relations that will be useful in this section.

\begin{lem}\label{lem:commut.relatios}
Let $m, \ell_1,\ldots, \ell_n\in \Z$ such that $\ell_i\neq \ell_j$ for all $i\neq j$. Then the following equation holds in $V(\zeta)$.
\begin{multline*}
x_\alpha(m)x_{-\alpha}(\ell_1)\cdots x_{-\alpha}(\ell_n)1_\zeta = \\ \sum_{i=1}^n (-1)^{i+1}(h_\alpha(m+\ell_i)+m\delta_{m, -\ell_i}(x_\alpha | x_{-\alpha})K) x_{-\alpha}(\ell_1)\cdots \widehat{x_{-\alpha}(\ell_i)}\cdots x_{-\alpha}(\ell_n)1_\zeta,
\end{multline*}
where the symbol $\widehat{x_{-\alpha}(\ell_i)}$ means that we are omitting this term from the summation. In particular,
If $m\neq -\ell_i$ for all $i=1,\ldots, n$, then 
	\[
x_\alpha(m)x_{-\alpha}(\ell_1)\cdots x_{-\alpha}(\ell_n)1_\zeta=0.
	\]
If $m=-\ell_j$ for a unique $j=1,\ldots, n$, then
\begin{align*}
x_\alpha(m)x_{-\alpha}(\ell_1) & \cdots x_{-\alpha}(\ell_n)1_\zeta \\
& =  (-1)^{j+1}(h_\alpha-\ell_j (x_\alpha | x_{-\alpha})K) x_{-\alpha}(\ell_1)\cdots \widehat{x_{-\alpha}(\ell_j)}\cdots x_{-\alpha}(\ell_n)1_\zeta \\
& = (-1)^{j+1}\zeta (h_\alpha-\ell_j (x_\alpha | x_{-\alpha})K) x_{-\alpha}(\ell_1)\cdots \widehat{x_{-\alpha}(\ell_j)}\cdots x_{-\alpha}(\ell_n)1_\zeta.
\end{align*}
\end{lem}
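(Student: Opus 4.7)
The plan is to prove the main identity by induction on $n$ directly in $M_{\cB_\nat}(\zeta)$ (from which it descends to $V(\zeta)$), and then to deduce the two special cases from the fact that $h_\alpha(k)$ acts as zero on $V(\zeta)$ whenever $k\neq 0$.

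For the base case $n=1$, since both $x_\alpha$ and $x_{-\alpha}$ are odd, the affine super-bracket gives
\[
[x_\alpha(m), x_{-\alpha}(\ell_1)] = h_\alpha(m+\ell_1) + m\,\delta_{m,-\ell_1}(x_\alpha|x_{-\alpha})K.
\]
Because $x_\alpha(m) \in \cB_\nat$ for every $m \in \Z$, the highest-weight condition gives $x_\alpha(m)\cdot 1_\zeta = 0$, whence $x_\alpha(m)\,x_{-\alpha}(\ell_1)\cdot 1_\zeta = [x_\alpha(m), x_{-\alpha}(\ell_1)]\cdot 1_\zeta$, matching the $i=1$ term. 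For the inductive step I would use the super-anti-commutator identity
\[
x_\alpha(m)\,x_{-\alpha}(\ell_1) = [x_\alpha(m), x_{-\alpha}(\ell_1)] - x_{-\alpha}(\ell_1)\,x_\alpha(m),
\]
apply the inductive hypothesis to $x_\alpha(m)\,x_{-\alpha}(\ell_2)\cdots x_{-\alpha}(\ell_n)\cdot 1_\zeta$, and reindex $j = i+1$; the minus sign from the anticommutator flips each $(-1)^j$ to the required $(-1)^{j+1}$. The critical computational point is that isotropy of $\alpha$ forces $[h_\alpha, x_{-\alpha}] = -\alpha(h_\alpha)x_{-\alpha} = 0$, the parity mismatch forces $(h_\alpha|x_{-\alpha}) = 0$, and $K$ is central; hence every coefficient $h_\alpha(m+\ell_i) + m\,\delta_{m,-\ell_i}(x_\alpha|x_{-\alpha})K$ commutes past every $x_{-\alpha}(\ell_j)$ without any correction from the affine cocycle, which is what lets me cleanly combine the two pieces.

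The two special cases then reduce to verifying that $h_\alpha(k)$ acts as zero on $V(\zeta)$ for every $k \neq 0$. Since $\alpha$ is isotropic and odd, the relations
\[
[h_\alpha, x_{\pm\alpha}] = 0,\quad [h_\alpha(k), h_\alpha(m)] = k\,\delta_{k,-m}(h_\alpha|h_\alpha)K = 0,\quad (h_\alpha|x_{\pm\alpha}) = 0
\]
imply that $h_\alpha(k)$ commutes with the entire subalgebra $\cK^-$ in $\bU(\cK)$ and is annihilated by the positive part of $\cB_\nat$. For $k < 0$, this makes $h_\alpha(k)\cdot 1_\zeta$ a highest-weight vector of $M_{\cB_\nat}(\zeta)$ of weight $\zeta + k\delta \neq \zeta$, so it generates a proper submodule, which vanishes in $V(\zeta)$; commutativity of $h_\alpha(k)$ with $\cK^-$ then propagates this conclusion to all of $V(\zeta) = \bU(\cK^-)\cdot 1_\zeta$. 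Substituting into the main identity: in the first special case, every summand carries either a factor $h_\alpha(m+\ell_i)$ with $m+\ell_i \neq 0$ (which acts as zero) or a Kronecker delta $\delta_{m,-\ell_i} = 0$; in the second special case, only the $i = j$ term survives, and there $h_\alpha$ and $K$ act by the scalars $\zeta(h_\alpha)$ and $\zeta(K)$, yielding the two equivalent displayed formulas.

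The main obstacle I anticipate is not any single calculation but the sign bookkeeping through the induction, together with confirming that no affine-cocycle corrections survive when the coefficients are commuted past the string of $x_{-\alpha}(\ell_j)$'s. Isotropy of $\alpha$ is the single structural input that eliminates both of these hazards and makes the argument go through cleanly.
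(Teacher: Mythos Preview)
Your proof is correct and follows essentially the same approach as the paper: both use the basic anticommutator $x_\alpha(m)x_{-\alpha}(\ell_i)=[x_\alpha(m),x_{-\alpha}(\ell_i)]-x_{-\alpha}(\ell_i)x_\alpha(m)$, the fact that $x_\alpha(m)1_\zeta=0$, and the observation (driven by isotropy of $\alpha$) that $L(\C h_\alpha)\oplus\C K$ commutes with all $x_{-\alpha}(\ell_j)$'s, together with $h_\alpha(k)1_\zeta=0$ in $V(\zeta)$ for $k\neq 0$. The only difference is presentational: you spell out the induction and rederive the vanishing of $h_\alpha(k)1_\zeta$ from the highest-weight/proper-submodule argument, whereas the paper simply cites its Proposition~\ref{prop:sl(1,1)-Verma-not-irred} for the latter and leaves the commutation calculation to the reader.
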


\begin{proof}
By the proof of Proposition~\ref{prop:sl(1,1)-Verma-not-irred}, we know that $h_\alpha(k)1_\zeta=0$, for every $k\in \Z\setminus\{0\}$. Now the result follows from the fact that $x_\alpha(m)1_\zeta=0$, for all $m\in \Z$, the elements of $L(\C h_\alpha)\oplus \C K$ commute with every element in $\g$, and the fact that
	\[
x_\alpha(m)x_{-\alpha}(\ell_i)= h_\alpha(m+\ell_i)+m\delta_{m, -\ell_i}(x_\alpha | x_{-\alpha})K - x_{-\alpha}(\ell_i) x_{\alpha}(m),
	\]
for every $i=1,\ldots, n$.
\end{proof}

\begin{cor}\label{lem:comm.relations.sl(1,1)}
Let $\ell_1,\ldots, \ell_n\in \Z$ such that $\ell_i\neq \ell_j$ for all $i\neq j$. Then the following equations hold in $V(\zeta)$.
\begin{enumerate}
\item \label{lem:comm.relations.sl(1,1)-1} Let $t\in \Z_{>0}$, and $m_1,\ldots, m_t\in \Z$. If there exists some $j=1,\ldots, t$, for which $m_j\neq -\ell_i$ for all $i=1,\ldots, n$, then 
	\[
x_\alpha(m_1)\cdots x_\alpha(m_t)x_{-\alpha}(\ell_1)\cdots x_{-\alpha}(\ell_n)1_\zeta=0.
	\]
\item \label{lem:comm.relations.sl(1,1)-3} If $t\leq n$, then 
\begin{multline*}
x_\alpha(-\ell_t)\cdots x_\alpha(-\ell_1)x_{-\alpha}(\ell_1)\cdots x_{-\alpha}(\ell_n)1_\zeta \\
= \prod_{i=1}^n \zeta (h_\alpha-\ell_i (x_\alpha | x_{-\alpha})K)x_{-\alpha}(\ell_{t+1})\cdots x_{-\alpha}(\ell_n) 1_\zeta.
\end{multline*}
In particular,
	\[
x_\alpha(-\ell_n)\cdots x_\alpha(-\ell_1)x_{-\alpha}(\ell_1)\cdots x_{-\alpha}(\ell_n)1_\zeta=\prod_{i=1}^n \zeta (h_\alpha-\ell_i (x_\alpha | x_{-\alpha})K) 1_\zeta.
	\]
\item \label{lem:comm.relations.sl(1,1)-2} If $t>n$, then 
	\[
x_\alpha(m_1)\cdots x_\alpha(m_t)x_{-\alpha}(\ell_1)\cdots x_{-\alpha}(\ell_n)1_\zeta=0,
	\]
for all $m_1,\ldots, m_t\in \Z$.
\end{enumerate}
\end{cor}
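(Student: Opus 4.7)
The plan relies on two structural facts about $\fsl(1,1)^{\widehat{\ }}$. First, since $\alpha$ is odd isotropic, one has $(x_\alpha|x_\alpha)=0$ and $[x_\alpha,x_\alpha]=0$, so that
\begin{equation*}
[x_\alpha(m),x_\alpha(m')] = [x_\alpha,x_\alpha](m+m') + m\delta_{m,-m'}(x_\alpha|x_\alpha)K = 0
\end{equation*}
in $\bU(\cK)$. Thus the odd loop elements $x_\alpha(m)$ mutually super-anticommute, and in particular $x_\alpha(m)^2 = \tfrac{1}{2}[x_\alpha(m),x_\alpha(m)] = 0$. Second, Lemma~\ref{lem:commut.relatios} already supplies the two ``in particular'' identities describing how a single $x_\alpha(m)$ acts on a string $x_{-\alpha}(\ell_1)\cdots x_{-\alpha}(\ell_n)1_\zeta$. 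Each part of the corollary reduces to these ingredients.

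For part~\eqref{lem:comm.relations.sl(1,1)-1}, I would use the anticommutativity to reorder the $x_\alpha(m_i)$'s at the cost of a sign. Move the distinguished factor $x_\alpha(m_j)$ (the one with $m_j \neq -\ell_i$ for every $i$) past the remaining $t-j$ factors to the rightmost position among the $x_\alpha$'s, picking up an overall sign $(-1)^{t-j}$. The expression becomes
\begin{equation*}
(-1)^{t-j}\, x_\alpha(m_1)\cdots\widehat{x_\alpha(m_j)}\cdots x_\alpha(m_t)\,\bigl[x_\alpha(m_j)\,x_{-\alpha}(\ell_1)\cdots x_{-\alpha}(\ell_n)\,1_\zeta\bigr],
\end{equation*}
and the bracketed factor vanishes by the first ``in particular'' of Lemma~\ref{lem:commut.relatios}.

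For part~\eqref{lem:comm.relations.sl(1,1)-3}, I would induct on $t$. (Implicit in the statement is that the product on the right should run up to $i=t$, which is consistent with the displayed special case $t=n$.) The base case $t=1$ is exactly the second ``in particular'' of Lemma~\ref{lem:commut.relatios} with $m=-\ell_1$. For the inductive step, apply the hypothesis at level $t-1$, producing
\begin{equation*}
\prod_{i=1}^{t-1}\zeta\bigl(h_\alpha - \ell_i(x_\alpha|x_{-\alpha})K\bigr)\,x_{-\alpha}(\ell_t)\cdots x_{-\alpha}(\ell_n)\,1_\zeta,
\end{equation*}
then apply $x_\alpha(-\ell_t)$ on the left. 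The scalar prefactor commutes through, and Lemma~\ref{lem:commut.relatios}, applied to the shorter string $\ell_t,\ldots,\ell_n$ in which $-\ell_t$ matches only the first index, contributes the missing factor $\zeta(h_\alpha - \ell_t(x_\alpha|x_{-\alpha})K)$ while removing $x_{-\alpha}(\ell_t)$. The displayed consequence then follows by setting $t=n$.

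For part~\eqref{lem:comm.relations.sl(1,1)-2}, I would dichotomize on whether the $m_j$'s are pairwise distinct. If two of them coincide, anticommutativity brings them adjacent up to a sign, and $x_\alpha(m)^2 = 0$ kills the entire expression. Otherwise the $m_j$'s are $t$ distinct integers, and since $t>n = |\{-\ell_1,\ldots,-\ell_n\}|$, pigeonhole forces some $m_j$ to lie outside $\{-\ell_1,\ldots,-\ell_n\}$, whence part~\eqref{lem:comm.relations.sl(1,1)-1} applies. No step poses a genuine obstacle; the only real subtlety is careful sign-tracking when permuting the odd loop elements, governed throughout by the fermionic relation $x_\alpha(m)x_\alpha(m') = -x_\alpha(m')x_\alpha(m)$.
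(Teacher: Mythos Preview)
Your proof is correct and spells out precisely the routine computations that the paper omits; the corollary is stated there without proof, as an immediate consequence of Lemma~\ref{lem:commut.relatios} together with the fermionic relations $x_\alpha(m)x_\alpha(m')=-x_\alpha(m')x_\alpha(m)$ and $x_\alpha(m)^2=0$. Your observation that the product in part~\eqref{lem:comm.relations.sl(1,1)-3} should run over $i=1,\ldots,t$ rather than $i=1,\ldots,n$ is also correct, as the special case $t=n$ confirms.
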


For every $\ell\in \Z$ and every $n\in \Z_{>0}$,  we define
	\[
\Z^n(\ell)=\{(\ell_1,\ldots, \ell_n)\in \Z^n\mid \ell_i\neq \ell_j,\  \ell_1+\cdots+\ell_n=\ell\}.
	\]
Next result characterizes the support of $V(\zeta)$.

\begin{prop}
Let $\ell\in \Z$ and $n\in \Z_{\geq 0}$. Then $V(\zeta)_{\zeta-n\alpha+\ell \delta}=0$ if and only if 		\[
\prod_{i=1}^n \zeta (h_\alpha-\ell_i (x_\alpha | x_{-\alpha})K) = 0,
	\]
for every $(\ell_1,\ldots, \ell_n)\in \Z^n(\ell)$.
\end{prop}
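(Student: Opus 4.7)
The plan is to combine the explicit commutation identities in Corollary~\ref{lem:comm.relations.sl(1,1)} with a Shapovalov-style description of the maximal proper submodule $M\subset M_{\cB_{\nat}}(\zeta)$.

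The forward implication is direct: if $V(\zeta)_{\zeta-n\alpha+\ell\delta}=0$ then for every $(\ell_1,\dots,\ell_n)\in\Z^n(\ell)$ the vector $v_{\ell_1,\dots,\ell_n}\defeq x_{-\alpha}(\ell_1)\cdots x_{-\alpha}(\ell_n)1_\zeta$ lies in $M$. Since $M$ is a submodule, $x_\alpha(-\ell_n)\cdots x_\alpha(-\ell_1)v_{\ell_1,\dots,\ell_n}\in M$; but Corollary~\ref{lem:comm.relations.sl(1,1)}(b) evaluates this element as $\prod_i\zeta(h_\alpha-\ell_i(x_\alpha|x_{-\alpha})K)\cdot 1_\zeta$, and since $1_\zeta\notin M$ the product must vanish.

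The reverse implication requires first identifying a spanning set for the weight space. The argument in Proposition~\ref{prop:sl(1,1)-Verma-not-irred} shows each $h_\alpha(-k)\cdot 1_\zeta$ ($k>0$) generates a proper $\cK$-submodule of $M_{\cB_{\nat}}(\zeta)$, hence $h_\alpha(-k)1_\zeta=0$ in $V(\zeta)$. Because $h_\alpha$ is central in $\fsl(1,1)$ and $(h_\alpha|h_\alpha)=0$, the element $h_\alpha(\pm k)$ commutes with $L(\fsl(1,1))\oplus\fh_c\oplus \C K$, so $h_\alpha(\pm k)V(\zeta)$ is a $\cK$-submodule of $V(\zeta)$ sitting in a different $d$-eigenspace from $1_\zeta$, hence proper, hence zero by simplicity. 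PBW together with $[h_\alpha(-k),x_{-\alpha}(\ell)]=0$ then shows $V(\zeta)_{\zeta-n\alpha+\ell\delta}$ is spanned by the images of the $v_{\ell_1,\dots,\ell_n}$.

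Next I would establish the characterization $v_{\bar\ell}\in M$ iff the $1_\zeta$-coefficient of $u\cdot v_{\bar\ell}$ vanishes for every $u\in \bU(\cK^+)_{n\alpha-\ell\delta}$. Using $\bU(\cK)=\bU(\cK^-)\bU(\cH)\bU(\cK^+)$ and the fact that $\Supp M_{\cB_{\nat}}(\zeta)\subseteq \zeta-Q(\cK^+)$, a weight-count forces the $\bU(\cK^-)$- and $\bU(\cH)$-factors in any element carrying $v_{\bar\ell}$ into a weight-$\zeta$ vector to be scalar. Monomials in $u$ containing a factor $h_\alpha(k)$ ($k>0$) kill $v_{\bar\ell}$ because $h_\alpha(k)$ commutes with every $x_{-\alpha}(\ell_j)$ and annihilates $1_\zeta$, so only $u$ of the form $x_\alpha(m_1)\cdots x_\alpha(m_n)$ with distinct $m_i$ summing to $-\ell$ need be tested. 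By Corollary~\ref{lem:comm.relations.sl(1,1)}(a) such a $u$ annihilates $v_{\bar\ell}$ unless $\{m_i\}=\{-\ell_i\}$, and in the matching case part (b) gives $u\cdot v_{\bar\ell}=\pm\prod_i\zeta(h_\alpha-\ell_i(x_\alpha|x_{-\alpha})K)\cdot 1_\zeta$. Since this product is symmetric in the $\ell_i$, the hypothesis that it vanishes on every tuple of $\Z^n(\ell)$ forces $v_{\bar\ell}\in M$ for every $\bar\ell$, giving $V(\zeta)_{\zeta-n\alpha+\ell\delta}=0$. The most delicate point is verifying that $h_\alpha(\pm k)$ acts as zero on all of $V(\zeta)$; this is genuinely super-specific and rests on the vanishing of the invariant form on $h_\alpha$, which is the feature distinguishing the isotropic case from the non-isotropic one treated in the previous section.
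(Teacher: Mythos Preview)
Your proof is correct, and the forward implication matches the paper's reasoning exactly (both use Corollary~\ref{lem:comm.relations.sl(1,1)}\eqref{lem:comm.relations.sl(1,1)-3}).

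For the reverse implication, however, you take a genuinely different and more laborious route than the paper. You set up a Shapovalov-type criterion for membership in the maximal submodule $M$: a vector $v_{\bar\ell}$ of weight $\zeta-n\alpha+\ell\delta$ lies in $M$ iff every $u\in\bU(\cN^+)_{n\alpha-\ell\delta}$ sends it to zero, which you then reduce (via the commutativity of $h_\alpha(k)$ with $x_{-\alpha}(\ell_j)$ and the $\alpha$-grading) to testing only the monomials $x_\alpha(m_1)\cdots x_\alpha(m_n)$. This is valid---the key weight-count works because the $\alpha$-grading forces the $\bU(\cN^-)$-factor to have $\alpha$-degree zero, and then the $\delta$-constraint forces it to be scalar---but it requires several auxiliary verifications you only sketch. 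By contrast, the paper's argument for this direction is a one-line reduction to the case $n=1$: if every product $\prod_i\zeta(h_\alpha-\ell_i(x_\alpha|x_{-\alpha})K)$ vanishes, then for each tuple some single factor $\zeta(h_\alpha-\ell_s(x_\alpha|x_{-\alpha})K)$ vanishes, whence $x_{-\alpha}(\ell_s)1_\zeta=0$ in $V(\zeta)$ by the $n=1$ case; since the $x_{-\alpha}(\ell_i)$ anticommute (they are odd and $\bU(L(\C x_{-\alpha}))\cong\Lambda(L(\C x_{-\alpha}))$), one can move $x_{-\alpha}(\ell_s)$ to the rightmost position and conclude $v_{\bar\ell}=0$ directly. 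Your approach has the merit of giving a complete description of which raising operators detect nonvanishing, but the paper's exterior-algebra observation is considerably quicker and avoids the delicate bookkeeping of the Shapovalov reduction.
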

\begin{proof}
By the proof of Proposition~\ref{prop:sl(1,1)-Verma-not-irred}, $h_\alpha(-\ell)1_\zeta=0$, for every $\ell\in \Z_{>0}$. Then, by PBW Theorem,
	\[
V(\zeta)_{\zeta-n\alpha+\ell \delta}={\rm span}_{\C}\{x_{-\alpha}(\ell_1)\cdots x_{-\alpha}(\ell_n)1_\zeta\mid (\ell_1,\ldots, \ell_n)\in \Z^n(\ell)\}.
	\]
Since 
\begin{equation}\label{eq:weigh.space.null}
\begin{aligned}
(x_\alpha(m)+h_\alpha(k)) x_{-\alpha}(\ell)1_\zeta & = x_\alpha(m) x_{-\alpha}(\ell)1_\zeta \\
& = \zeta (h_\alpha(m+\ell)+m\delta_{m,-\ell} (x_\alpha | x_{-\alpha})K) 1_\zeta,
\end{aligned}
\end{equation}
for any $m\in \Z$, $k\in \Z_{>0}$, we obtain that $V(\zeta)_{\zeta-\alpha+\ell\delta}\neq 0$ if and only if $\zeta (h_\alpha-\ell (x_\alpha | x_{-\alpha})K) \neq 0$. We claim that $V(\zeta)_{\zeta-n\alpha+\ell\delta} \neq 0$ if and only if there exist an element $(\ell_1,\ldots, \ell_n)\in \Z^n(\ell)$ such that 
	\[
\prod_{i=1}^n \zeta (h_\alpha-\ell_i (x_\alpha | x_{-\alpha})K) \neq 0.
	\]
Indeed, it follows from Corollary~\ref{lem:comm.relations.sl(1,1)} that, if such a $(\ell_1,\ldots, \ell_n)\in \Z^n(\ell)$ does exist, then $x_{-\alpha}(\ell_1)\cdots x_{-\alpha}(\ell_n)1_\zeta\neq 0$, and $V(\zeta)_{\zeta-n\alpha+\ell \delta}\neq 0$.

Conversely, suppose that such a $(\ell_1,\ldots, \ell_n)\in \Z^n(\ell)$ does not exist. Then, for any $(\ell_1,\ldots, \ell_n)\in \Z^n(\ell)$ there exists $s\in \{1,\ldots, n\}$ such that $\zeta (h_\alpha-\ell_s (x_\alpha | x_{-\alpha})K) = 0$. In particular, by the $n=1$ case discussed above, we have
	\[
x_{-\alpha}(\ell_s)1_\zeta \in V(\zeta)_{\zeta-\alpha+\ell_s\delta}=0.
	\]
By PBW Theorem
	\[
\bU(L(\C x_{-\alpha})) \cong \Lambda (L(\C x_{-\alpha})),
	\]
where $\Lambda (L(\C x_{-\alpha}))$ denotes the exterior algebra associated to the loop space $L(\C x_{-\alpha})$.  Hence
	\[
x_{-\alpha}(\ell_1)\cdots x_{-\alpha}(\ell_n)1_\zeta  = -x_{-\alpha}(\ell_1)\cdots x_{-\alpha}(\ell_{s-1})x_{-\alpha}(\ell_n)x_{-\alpha}(\ell_{s+1})\cdots x_{-\alpha}(\ell_s)1_\zeta=0,
	\]
showing that $V(\zeta)_{\zeta-n\alpha+\ell\delta} = 0$. 
\end{proof}

\begin{cor}
Let $\ell\in \Z$. If $\zeta \neq 0$, then $V(\zeta)_{\zeta-n\alpha+\ell \delta}\neq 0$ for all $n\geq 2$.
\end{cor}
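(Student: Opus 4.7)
My plan is to reduce directly to the preceding proposition. Writing $a=\zeta(h_\alpha)$, $b=\zeta(K)$, and $c=(x_\alpha|x_{-\alpha})$, that result says $V(\zeta)_{\zeta-n\alpha+\ell\delta}\neq 0$ precisely when some $(\ell_1,\ldots,\ell_n)\in\Z^n(\ell)$ satisfies $a-cb\ell_i\neq 0$ for every $i$. Note that $c\neq 0$, since $(\cdot|\cdot)$ pairs $\g_\alpha$ non-degenerately with $\g_{-\alpha}$. I read the hypothesis $\zeta\neq 0$ as $(a,b)\neq(0,0)$; indeed, if both $a$ and $b$ vanish then every factor $a-cb\ell_i$ is zero and the weight space is forced to be zero, so this is the natural content of the assumption.

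The argument then splits into two cases. If $b=0$, then $a\neq 0$, every factor equals $a\neq 0$, and any tuple in $\Z^n(\ell)$ does the job; for $n\geq 2$ and arbitrary $\ell$ such a tuple clearly exists, for instance the explicit choice
\[
(\ell_1,\ldots,\ell_n)=\bigl(M,\, M+1,\,\ldots,\,M+n-2,\ \ell-(n-1)M-\tbinom{n-1}{2}\bigr)
\]
for any sufficiently large $M$.

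If $b\neq 0$, there is a single forbidden value $k:=a/(cb)$ that the $\ell_i$ must avoid. I would use the same explicit tuple, verifying that for $M\gg 0$ the first $n-1$ coordinates avoid $k$ (they are consecutive integers tending to $+\infty$) and that the last coordinate, which tends to $-\infty$ as $M\to\infty$, is distinct both from $k$ and from each of the previous $n-1$ entries. All of these conditions fail only on finitely many values of $M$, so any large enough $M$ works.

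The only point requiring care is ensuring that $\Z^n(\ell)$ is rich enough to dodge the single forbidden value $k$ in each of its $n$ slots while still satisfying the sum constraint; this is transparent from the explicit construction as soon as $n\geq 2$, so there is no genuine obstacle beyond this elementary bookkeeping. The substantive content of the corollary sits entirely in the preceding proposition.
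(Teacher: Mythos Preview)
Your proof is correct and follows the same route as the paper: reduce to the preceding proposition and observe that the linear map $\ell\mapsto a-cb\ell$ has at most one zero on $\Z$ when $(a,b)\neq(0,0)$, so for $n\geq 2$ one can pick a tuple in $\Z^n(\ell)$ avoiding it. The only cosmetic difference is that the paper uses a pigeonhole argument (take two tuples in $\Z^n(\ell)$ with pairwise disjoint entry sets; at most one of them can contain the forbidden value) rather than your explicit construction, and your remark that ``$\zeta\neq 0$'' must really mean $(\zeta(h_\alpha),\zeta(K))\neq(0,0)$ is well taken --- the paper's own proof tacitly uses the same reading.
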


\begin{proof}
It follows from the fact that $| \ker (\zeta)\cap \{h_\alpha + \C K\}| = 1$ along with the fact that there exist two elements $(\ell_1,\ldots, \ell_n), (\ell_1',\ldots, \ell_n')$ in $\Z^n(\ell)$ such that $\ell_i\neq \ell_j'$ for all $i,j=1,\ldots, n$. 
\end{proof}
\begin{cor}
If $\zeta\neq 0$, then $V(\zeta)$ is infinite-dimensional. If $\zeta=0$, then $V(\zeta)=\C 1_\zeta$.
\end{cor}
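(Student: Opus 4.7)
The plan is to handle the two cases separately, using the preceding proposition and corollary as the main inputs.

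For $\zeta \neq 0$, the infinite-dimensionality is immediate from the preceding corollary: that result gives $V(\zeta)_{\zeta - n\alpha + \ell\delta} \neq 0$ for every $n \geq 2$ and every $\ell \in \Z$. Fixing $n = 2$ and letting $\ell$ range over $\Z$ produces infinitely many pairwise distinct weights of $V(\zeta)$, hence infinitely many linearly independent weight vectors, so $\dim_{\C} V(\zeta) = \infty$.

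For $\zeta = 0$, the goal is to show that every weight space of $V(\zeta)$ other than $V(\zeta)_\zeta = \C 1_\zeta$ is zero. Since $\zeta$ is the zero functional, $\zeta(h_\alpha - \ell_i(x_\alpha|x_{-\alpha})K) = 0$ for every $\ell_i \in \Z$, so each product $\prod_{i=1}^n \zeta(h_\alpha - \ell_i(x_\alpha|x_{-\alpha})K)$ appearing in the preceding proposition vanishes trivially. That proposition then yields $V(\zeta)_{\zeta - n\alpha + \ell\delta} = 0$ for every $n \geq 1$ and every $\ell \in \Z$.

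It remains to kill the weight spaces $V(\zeta)_{\zeta + \ell\delta}$ with $\ell \neq 0$. Since $V(\zeta)$ is a highest-weight module with highest weight $\zeta$, only $\ell < 0$ can occur, so write $\ell = -m$ with $m > 0$. By the PBW theorem applied to the decomposition $\cK^- = L(\C x_{-\alpha}) \oplus (h_\alpha \otimes t^{-1}\C[t^{-1}])$, the weight space $M_{\cB_\nat}(\zeta)_{\zeta - m\delta}$ is spanned by monomials of the form $h_\alpha(-m_1)\cdots h_\alpha(-m_k)\cdot 1_\zeta$ with $m_i > 0$ and $\sum m_i = m$ (no $x_{-\alpha}$-factors can occur, as they would introduce an $\alpha$-component). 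Invoking the centrality of $\bU(h_\alpha \otimes t^{-1}\C[t^{-1}])$ in $L(\fsl(1,1))$ used in the proof of Proposition~\ref{prop:sl(1,1)-Verma-not-irred}, each $h_\alpha(-m_i)\cdot 1_\zeta$ lies in a proper $\cK$-submodule of $M_{\cB_\nat}(\zeta)$, hence vanishes in the simple quotient $V(\zeta)$. This forces $V(\zeta)_{\zeta - m\delta} = 0$ for all $m > 0$ and completes the proof that $V(\zeta) = \C 1_\zeta$.

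The argument is essentially bookkeeping, so the main obstacle is conceptual rather than technical: one must notice that the preceding proposition does not directly cover the ``pure $\delta$'' weights $\zeta - m\delta$ with $m > 0$, and that these require a separate appeal to the centrality argument from Proposition~\ref{prop:sl(1,1)-Verma-not-irred}. Once this gap is identified, closing it is immediate.
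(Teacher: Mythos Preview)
Your proof is correct and follows the approach the paper implicitly intends; the paper states this corollary without proof, relying on the proposition and corollary immediately preceding it, exactly as you do. One small remark: the proposition is actually stated for $n \in \Z_{\geq 0}$, so read literally (empty tuple in $\Z^0(\ell)$ only when $\ell=0$, empty product equal to $1$) it already handles the ``pure $\delta$'' weights; your separate treatment via the centrality of $h_\alpha(-m)$ is nonetheless valid and arguably more transparent.
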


\section*{Acknowledgment}
L.C. is supported in part by the Capes grant (88881.119190/2016-01) and by the PRPq grant ADRC-05/2016. V.F. is supported by the CNPq grant (304467/2017-0).  The second author is grateful to Ivan Dimitrov and Duncan Melville for many stimulating discussions.

\
\


\bibliographystyle{alpha}

\bibliography{/Users/lucascalixto/Dropbox/Research/Bib/bibliography.bib}

\newcommand{\arxiv}[1]{\href{http://arxiv.org/abs/#1}{\tt
  arXiv:\nolinkurl{#1}}}\newcommand{\doi}[1]{DOI:
  \href{http://dx.doi.org/#1}{\tt \nolinkurl{#1}}}
\begin{thebibliography}{DMP04}

\bibitem[Cox94]{Cox94}
B.~Cox.
\newblock Verma modules induced from nonstandard {B}orel subalgebras.
\newblock {\em Pacific J. Math.}, 165(2):269--294, 1994.

\bibitem[DFG09]{DFG09}
I.~Dimitrov, V.~Futorny, and D.~Grantcharov.
\newblock Parabolic sets of roots.
\newblock In {\em Groups, rings and group rings}, volume 499 of {\em Contemp.
  Math.}, pages 61--73. Amer. Math. Soc., Providence, RI, 2009.

\bibitem[DMP04]{DMP04}
I.~Dimitrov, O.~Mathieu, and I.~Penkov.
\newblock Errata to: ``{O}n the structure of weight modules'' [{T}rans. {A}mer.
  {M}ath. {S}oc. {\bf 352} (2000), no. 6, 2857--2869; mr1624174].
\newblock {\em Trans. Amer. Math. Soc.}, 356(8):3403--3404, 2004.

\bibitem[ERF09]{EF09}
S.~Eswara~Rao and V.~Futorny.
\newblock Integrable modules for affine {L}ie superalgebras.
\newblock {\em Trans. Amer. Math. Soc.}, 361(10):5435--5455, 2009.

\bibitem[FK02]{FK02}
D.~Fattori and V.~Kac.
\newblock Classification of finite simple {L}ie conformal superalgebras.
\newblock {\em J. Algebra}, 258(1):23--59, 2002.
\newblock Special issue in celebration of Claudio Procesi's 60th birthday.

\bibitem[FS93]{FS93}
V.~Futorny and H.~Saifi.
\newblock Modules of {V}erma type and new irreducible representations for
  affine {L}ie algebras.
\newblock In {\em Representations of algebras ({O}ttawa, {ON}, 1992)},
  volume~14 of {\em CMS Conf. Proc.}, pages 185--191. Amer. Math. Soc.,
  Providence, RI, 1993.

\bibitem[FSS00]{FSS00}
L.~Frappat, A.~Sciarrino, and P.~Sorba.
\newblock {\em Dictionary on {L}ie algebras and superalgebras}.
\newblock Academic Press, Inc., San Diego, CA, 2000.

\bibitem[Fut92]{Fut92}
V.~Futorny.
\newblock The parabolic sets of root system and corresponding representations
  of affine {L}ie algebras.
\newblock {\em Contemporary Math.}, 131(Part 2):45--52, 1992.

\bibitem[Fut94]{Fut94}
V.~Futorny.
\newblock Imaginary {V}erma modules for affine {L}ie algebras.
\newblock {\em Canad. Math. Bull.}, 37(2):213--218, 1994.

\bibitem[Fut97]{Fut97}
V.~Futorny.
\newblock {\em Representations of affine {L}ie algebras}, volume 106 of {\em
  Queen's Papers in Pure and Applied Mathematics}.
\newblock Queen's University, Kingston, ON, 1997.

\bibitem[JK89]{JK89}
P.~Jakobsen and V.~Kac.
\newblock A new class of unitarizable highest weight representations of
  infinite-dimensional {L}ie algebras. {II}.
\newblock {\em J. Funct. Anal.}, 82(1):69--90, 1989.

\bibitem[Kac78]{kac78}
V.~Kac.
\newblock Representations of classical {L}ie superalgebras.
\newblock In {\em Differential geometrical methods in mathematical physics,
  {II}, ({P}roc. {C}onf., {U}niv. {B}onn, {B}onn, 1977)}, volume 676 of {\em
  Lecture Notes in Math.}, pages 597--626. Springer, Berlin, 1978.

\bibitem[KW94]{KW94}
V.~Kac and M.~Wakimoto.
\newblock Integrable highest weight modules over affine superalgebras and
  number theory.
\newblock In {\em Lie theory and geometry}, volume 123 of {\em Progr. Math.},
  pages 415--456. Birkh\"auser Boston, Boston, MA, 1994.

\bibitem[KW01]{KW01}
V.~Kac and M.~Wakimoto.
\newblock Integrable highest weight modules over affine superalgebras and
  {A}ppell's function.
\newblock {\em Comm. Math. Phys.}, 215(3):631--682, 2001.

\bibitem[Ser11]{Ser11}
V.~Serganova.
\newblock Kac-{M}oody superalgebras and integrability.
\newblock In {\em Developments and trends in infinite-dimensional {L}ie
  theory}, volume 288 of {\em Progr. Math.}, pages 169--218. Birkh\"auser
  Boston, Inc., Boston, MA, 2011.
\newblock \doi{10.1007/978-0-8176-4741-4_6}.

\end{thebibliography}

\end{document}